\begin{document}

\title{Approximate Bregman proximal gradient algorithm with variable metric Armijo--Wolfe line search}
\author[1]{\fnm{Kiwamu} \sur{Fujiki}}\email{fujiki-kiwamu1111@g.ecc.u-tokyo.ac.jp}
\author*[1]{\fnm{Shota} \sur{Takahashi}}\email{shota@mist.i.u-tokyo.ac.jp}
\author[1,2]{\fnm{Akiko} \sur{Takeda}}\email{takeda@mist.i.u-tokyo.ac.jp}

\affil*[1]{\orgdiv{Graduate School of Information Science and Technology}, \orgname{The University of Tokyo}, \orgaddress{\street{7-3-1 Hongo}, \city{Bunkyo}, \postcode{113-8656}, \state{Tokyo}, \country{Japan}}}
\affil[2]{\orgdiv{Center for Advanced Intelligence Project}, \orgname{RIKEN}, \orgaddress{\street{1-4-1 Nihonbashi}, \city{Chuo}, \postcode{103-0027}, \state{Tokyo}, \country{Japan}}}

\abstract{We propose a variant of the approximate Bregman proximal gradient (ABPG) algorithm for minimizing the sum of a smooth nonconvex function and a nonsmooth convex function. ABPG is known to converge globally to a stationary point even when the smooth part of the objective function does not have a globally Lipschitz continuous gradient, and its iterates can often be expressed in closed form. However, ABPG relies on an Armijo line search to guarantee global convergence, which can slow down its practical performance. To address this issue, we propose a variant of ABPG with a variable metric Armijo--Wolfe line search. Under the variable metric Armijo--Wolfe condition, we establish global subsequential convergence of the algorithm. Moreover, assuming the Kurdyka--\L{}ojasiewicz property, we also prove that the algorithm globally converges to a stationary point. Numerical experiments on $\ell_p$-regularized least squares problems and nonnegative linear inverse problems demonstrate that the proposed algorithm outperforms existing algorithms.}

\keywords{Composite nonconvex nonsmooth optimization, Bregman proximal gradient algorithms, Kurdyka--\L{}ojasiewicz property, Bregman divergence}

\maketitle

\section{Introduction}
We consider composite nonconvex optimization problems of the form
\begin{align}
    \min_{x\in\cl C} \quad\Psi(x)\coloneqq f(x) + g(x), \label{problem}
\end{align}
where $f:\RR^n\rightarrow(-\infty,+\infty]$ is a continuously differentiable function, $g:\RR^n\rightarrow(-\infty,+\infty]$ is a possibly nondifferentiable convex function, and $\cl C$ is the closure of a nonempty open convex set $C \subset \RR^n$.
Optimization problems of the form~\eqref{problem} arise in various applications, including the maximum a posteriori (MAP) estimate~\cite{MAP,MAP2}, ridge regression~\cite{ridge}, the least absolute shrinkage and selection operator (LASSO)~\cite{LASSO}. In machine learning and signal processing, regularization or penalty terms are often introduced to prevent overfitting and impose the model structure. Some regularization terms are not necessarily differentiable. 

Numerous algorithms using proximal mappings have been proposed to solve~\eqref{problem}. For instance, the proximal gradient method~\cite{Bruck1975-et,lions1979splitting,PASSTY1979383} and the fast iterative shrinkage-thresholding algorithm (FISTA)~\cite{fista} belong to the class of proximal algorithms.
Convergence analysis of these algorithms with constant step-sizes typically relies on the global Lipschitz continuity of $\nabla f$, i.e., there exists $L>0$ such that $\|\nabla f(x) - \nabla f(y)\|\leq L\|x-y\|$ for any $x,y\in\RR^n$. This condition is often restrictive and does not hold in certain applications in signal processing and machine learning.

Bolte \etal~\cite{Bolte-2018} proposed the Bregman proximal gradient algorithm (BPG). This algorithm globally converges under the smooth adaptable property~\cite{Bolte-2018}, also called relative smoothness~\cite{Lu-haihao}, which is a relaxation of the global Lipschitz continuity of $\nabla f$. In recent years, the Bregman proximal gradient method has been improved from various perspectives.
Hanzely~\etal~\cite{Hanzely2021-sz} proposed accelerated Bregman proximal gradient algorithms for convex optimization problems using the triangle scaling property.
Mukkamala~\etal~\cite{Mukkamala2020-ea} proposed an accelerated version of BPG.
Some researchers have applied Bregman proximal-type algorithms to linear inverse problems~\cite{Bauschke2017-hg,Takahashi}, nonnegative matrix factorization~\cite{Mukkamala2019-mk,Takahashi2026-rv}, and blind deconvolution~\cite{Takahashi2023-uh}.

Since the subproblem of BPG cannot always be solved in closed form and is sometimes hard to solve depending on the Bregman distance, Takahashi and Takeda~\cite{Takahashi} proposed the approximate Bregman proximal gradient algorithm (ABPG), whose subproblem is easier to solve. Instead of the Bregman distance, ABPG uses the approximate Bregman distance (see also \eqref{App-Breg-dist}), which is the second-order approximation of the Bregman distance. The subproblem of ABPG can be written by the sum of a quadratic function and a regularizer. Moreover, if the Bregman distance is separable, the subproblem of ABPG is reduced to $n$ independent one-dimensional optimization problems. ABPG uses the line search procedure to ensure the accuracy of the approximate Bregman distance. 
However, the global convergence of ABPG has not been established when  $g\not\equiv 0$, and line search procedures can lead to slow convergence in practice.

In this paper, we propose a new algorithm, named the approximate Bregman proximal gradient algorithm with variable metric Armijo--Wolfe line search (ABPG-VMAW). The line search procedure of this algorithm is inspired by variable metric inexact line search based algorithms~\cite{Bonettini2016-bn,Bonettini_2017}.
In the same way as ABPG, the subproblem of ABPG-VMAW is defined by
\begin{align}
    y^k = \argmin_{u\in\cl C}\left\{\langle \nabla f(x^{k}), u- x^{k} \rangle + g(u) + \frac{1}{\lambda}\tilde{D}_{\phi}(u,x^{k})\right\},
\end{align}
where $x^k \in \cl C$, $\lambda > 0$, and $\tilde{D}_\phi(u,x) := \frac{1}{2}\langle\nabla^2\phi(x)(u - x), u - x\rangle$ is the approximate Bregman distance with a twice continuously differentiable convex function $\phi$. The search direction of ABPG is defined by $d^k = y^k - x^k$, and ABPG searches for $t_k \in (0,1]$ in each iteration to decide the step-size. The Armijo-like condition adopted in~\cite{Takahashi} is so stringent that computing $t_k$ can be time-consuming, and it may force $t_k$ to be very small, which causes slow convergence. In this paper, we adopt a relaxed condition that allows larger values of $t_k$. In addition to this, inspired by the  Armijo--Wolfe-like condition, which Lewis and Overton~\cite{Lewis2013} applied to the quasi-Newton methods, we also propose a curvature condition for proximal algorithms. It aims to avoid excessively small step-sizes while ensuring that the search direction $d^k$ approaches 0 as $k \to \infty$. Similar to Bonettini \etal~\cite{Bonettini_2017}, we also add a rule at the end of each iteration to select, as the updated point, the one that yields a smaller value of the objective function between $y^k$ and the point provided from the line search. 

Through these modifications, we establish that, under standard assumptions, accumulation points of a sequence generated by ABPG-VMAW are stationary points. Furthermore, by assuming the Kurdyka--\L{}ojasiewicz property~\cite{90074980-9c0c-33c4-b119-70171dab0b45} for $\Psi$, we prove that our algorithm achieves global convergence even for $g\not\equiv 0$. 

Moreover, numerical experiments on $\ell_p$ regularized least squares problems and nonnegative linear inverse problems demonstrate that ABPG-VMAW outperforms ABPG and other existing algorithms. In particular, the reduction of the objective function value within a small number of iterations is faster for ABPG-VMAW than for ABPG.

The structure of this paper is as follows.  
Section~\ref{sec:preliminaries} introduces essential notation such as the subdifferential, the Bregman distances, and the Kurdyka--\L{}ojasiewicz property.
In Section~\ref{sec:proposed-algorithm}, we propose ABPG-VMAW and discuss its line search conditions.
Section~\ref{sec:convergence-analysis} presents properties of ABPG-VMAW and its global convergence. 
Section~\ref{sec:numerical-experiments} presents numerical experiments on $\ell_p$ regularized least squares problems and nonnegative linear inverse problems.  
Finally, in Section~\ref{sec:conclusion}, we present conclusions and future research directions.

\paragraph{Notation}
In this paper, we use the following notation.
Let $\RR$ and $\RR_+$ be the set of real numbers and nonnegative real numbers, respectively.
Let $\RR^n$ and $\RR_+^n$ be the real space of $n$ dimensions and the nonnegative orthant of $\RR^n$, respectively.
Let $\RR^{n \times m}$ be the set of $n \times m$ real matrices. The identity matrix is $I \in \RR^{n \times n}$.
Let $|x|$ and $x^p$ be the elementwise absolute and $p$th power vectors of
$x \in \RR^n$, respectively. Given a real number $p \geq 1$, the $\ell_p$ norm is defined by $\|x\|_p = \left( \sum_{i=1}^n |x_i|^p \right)^{1/p}$.
Let $\lambda_{\max}(M)$ be the largest eigenvalue of a symmetric matrix $M \in \RR^{n \times n}$.

Let $B(\bar{x},r) = \Set{x \in \RR^n}{\|x - \bar{x}\|\leq r}$ denote the ball with center $\bar{x} \in \RR^n$ and radius $r > 0$.
Let $\interior C$ and $\cl C$ be the interior and the closure of a set $C \subset \RR^n$, respectively. The distance from a point $x \in \RR^n$ to $C$ is defined by $\dist(x, C) := \inf_{y \in C} \|x - y\|$.
The indicator function $\delta_C$ is defined by $\delta_C(x) = 0$ for $x\in C$ and $\delta_C(x) = +\infty$ otherwise.
The sign function $\sgn(x)$ is defined by $\sgn(x) = -1$ for $x < 0$, $\sgn(x) = 0$ for $x = 0$, and $\sgn(x) = 1$ for $x > 0$.

Given $y\in\RR^n$ and $z\in\RR$, we define a set $[\Psi(y)<\Psi<\Psi(y)+z]$ as the set of all $x$ in the subset of $\RR^n$ that satisfy $\Psi(y)<\Psi(x)<\Psi(y)+z$.
Given $k \in \mathbb{N}$, let $\mathcal{C}^k$ be the class of $k$-times continuously differentiable functions.

\section{Preliminaries}\label{sec:preliminaries}
\subsection{Subdifferentials}
First, we introduce the definitions of subdifferentials. For an extended-real-valued function $f:\RR^n \rightarrow [-\infty,+\infty]$, the effective domain of $f$ is defined by $\dom f \coloneqq \Set{x\in\RR^n}{f(x)<+\infty}$. The function $f$ is proper if $f(x)>-\infty$ for all $x\in \RR^n$ and $\dom f \neq \emptyset$.
\begin{definition}[Regular and Limiting Subdifferentials~{\cite[Definition 8.3]{rockafellar2009variational}}]
    Let $f:\RR^n\rightarrow (-\infty,+\infty ]$ be a proper and lower semicontinuous function.
    \begin{enumerate}
        \item The \emph{regular subdifferential} of $f$ at $x\in\dom f$ is defined by
        \begin{align}
            \hat{\partial} f(x) = \left\{ \xi \in \RR^n \,\middle|\, \liminf_{y\to x,y\neq x}\frac{f(y)-f(x)-\langle \xi,y-x\rangle}{\|x-y\|}\geq 0\right\}.
        \end{align}
        When $x\not\in\dom f$, we set $\hat{\partial} f(x)=\emptyset$.
        \item The \emph{limiting subdifferential} of $f$ at $x\in\dom f$ is defined by
        \begin{align}
            \partial f(x) = \left\{ \xi \in \RR^n \,\middle|\, \exists x^k \xrightarrow{f} x,\xi^k \to \xi, \forall k\in\NN, \xi^k \in \hat{\partial} f(x^k)\right\},
        \end{align}
        where $x^k \xrightarrow{f} x$ means $x^k \to x$ and $f(x^k) \to f(x)$.
    \end{enumerate}
\end{definition}
Generally, $\hat{\partial} f(x) \subset \partial f(x)$ holds for all $x\in\RR^n$~\cite[Theorem 8.6]{rockafellar2009variational}. We define $\dom \partial f \coloneqq \{x\in \RR^n| \partial f(x) \neq \emptyset\}$. If $f$ is convex, the regular and limiting subdifferentials coincide with the (classical) subdifferential~\rm{\cite[Proposition 8.12]{rockafellar2009variational}}.

For a proper and convex function $f:\RR^n\rightarrow (-\infty,+\infty]$, the directional derivative of $f$ at $x\in\dom f$ in the direction $d$ is given by
\begin{align}
    f'(x;d) = \lim_{t\to+0}\frac{f(x+td)-f(x)}{t}. \label{def:direction}
\end{align}
From~\cite[Theorem 23.1]{Rockafellar1970-if}, $\frac{f(x+td)-f(x)}{t}$ is monotonically non-decreasing with respect to $t$ for $t>0$. The limit on the right-hand side always exists if $\pm\infty$ is allowed as a possible limit value. For any $x\in\dom f$, $\xi\in\partial f(x)$ if and only if $f'(x;d)\geq \langle \xi , d\rangle$ holds for any $d \in\RR^n$~\cite[Theorem 23.2]{Rockafellar1970-if}.

\subsection{Bregman Distances}
Let $C$ be a nonempty and convex subset of $\RR^n$. We introduce the kernel generating distance~\cite{Bolte-2018} and the Bregman distance.
\begin{definition}[Kernel Generating Distances~{\cite[Definition 2.1]{Bolte-2018}}]\label{kernel-generative}
    A function $\phi:\RR^n\rightarrow(-\infty,+\infty]$ is called a \emph{kernel generating distance} associated with $C$ if it satisfies the following conditions:
    \begin{enumerate}
        \item $\phi$ is a proper, lower semicontinuous, and convex function, with $\dom \phi\subset \cl C$ and $\dom \partial \phi = C$.
        \item $\phi$ is $\mathcal{C}^1$ on $\interior\dom \phi \equiv C$.
    \end{enumerate}
    We denote $\mathcal{G}(C)$ as the class of kernel generating distances associated with $C$.
\end{definition}
\begin{definition}[Bregman Distances~{\cite{Bregman1967TheRM}}]
    For a kernel generating distance $\phi\in\mathcal{G}(C)$, a \emph{Bregman distance} $D_{\phi}:\dom \phi \times \interior\dom \phi \rightarrow \RR_+$ is defined by
    \begin{align}
        D_{\phi}(x,y)=\phi(x) - \phi(y) - \langle \nabla\phi(y),x-y \rangle.
    \end{align}
\end{definition}
Because the Bregman distance does not satisfy the symmetry and the triangle inequality, it is not a distance. Due to the convexity of $\phi$, $D_{\phi}(x,y)\geq 0$ for any $(x,y) \in \dom \phi \times \interior\dom \phi$. If $\phi$ is strictly convex, $D_{\phi}(x,y) = 0$ holds if and only if $x = y$.
We also show some examples of Bregman distances.
\begin{example}\quad
    \begin{itemize}
        \item Mahalanobis distance: Let $\phi(x) = \frac{1}{2}\langle Ax, x\rangle$ for a positive definite matrix $A\in\RR^{n\times n}$ and $\dom\phi = \RR^n$. 
        Then, we have $D_\phi(x,y)=\frac{1}{2}\langle A(x - y), x - y\rangle$, which is called the Mahalanobis distance. When $A = I$, the Mahalanobis distance corresponds with the squared Euclidean distance, \ie, $D_\phi(x,y)=\frac{1}{2}\|x - y\|^2$.
        \item Kullback--Leibler divergence~\textup{\cite{Kullback1951-yv}}: Let $\phi$ be the Boltzmann--Shannon entropy, \ie, $\phi(x) = \sum_{i=1}^nx_i\log x_i$ with $0\log 0 = 0$ and $\dom\phi = \RR_+^n$.
        Then, we have $D_\phi(x,y)=\sum_{i=1}^n x_i\log \frac{x_i}{y_i}$, which is called the Kullback--Leibler divergence.
        \item Itakura--Saito divergence~\textup{\cite{Itakura1968-en}}: Let $\phi$ be the Burg entropy, \ie, $\phi(x) = -\sum_{i=1}^n\log x_i$ and $\dom\phi = \RR^n_{++}$.
        Then, we have $D_\phi(x,y)=\sum_{i=1}^n \left(\frac{x_i}{y_i} - \log\frac{x_i}{y_i} - 1\right)$, which is called the Itakura--Saito divergence.
    \end{itemize}
\end{example}
See~\cite{Bauschke2017-hg,Bauschke1997-vk,Lu-haihao} and~\cite[Table 2.1]{Dhillon2008-zz} for more examples.

\subsection{Kurdyka--Łojasiewicz Property}
The Kurdyka--\L{}ojasiewicz (KL) property is an essential assumption to establish global convergence. Attouch \etal~\cite{90074980-9c0c-33c4-b119-70171dab0b45} extended the \L{}ojasiewicz gradient inequality~\cite{Kurdyka1998,AIF_1993__43_5_1575_0} to nonsmooth functions.

For $v>0$, we define $\Xi_{v}$ as a set of all continuous concave functions $\psi:[0,v)\rightarrow\RR_+$ that are $\mathcal{C}^1$ on $(0,v)$ and satisfies $\psi(0)=0$, and whose derivative $\psi'(x)$ is positive on $(0,v)$.
We define the Kurdyka--\L{}ojasiewicz property.
\begin{definition}[Kurdyka--\L{}ojasiewicz Property~{\cite{90074980-9c0c-33c4-b119-70171dab0b45}}]\label{def:KL-property}
    Let $f:\RR^n\rightarrow (-\infty, +\infty]$ be a proper and lower semicontinuous function.
    The function $f$ is said to satisfy the \emph{Kurdyka--\L{}ojasiewicz property} (for short: KL property) at $\bar{x}\in \dom \partial f$ if there exist $v \in (0,+\infty ]$, a neighborhood $U$ of $\bar{x}$, and a function  $\psi\in\Xi_{v}$, such that for any $x \in U \cap [f(\bar{x}) < f < f(\bar{x})+v]$, the following inequality holds:
    \begin{align}
        \psi '(f(x)-f(\bar{x}))\dist(0,\partial f(x))\geq 1. \label{ineq:KL-property}
    \end{align}
    Moreover, $f$ is called a \emph{KL function} if $f$ satisfies the KL property at each point of $\dom\partial f$.
\end{definition}
The uniformized KL property is established by the KL property.
\begin{lemma}[Uniformized KL property~{\cite[Lemma 6]{bolte-2014}}]\label{lemma:uniformized-kl}
    Assume that $f:\RR^n\rightarrow (-\infty, +\infty]$ is a proper and lower semicontinuous function. If $f$ takes a constant value on some compact set $\Gamma$, and satisfies the KL property on $\Gamma$, then there exist $v,\epsilon \in (0,+\infty ]$, and $\psi\in\Xi_{v}$ such that, for any $\bar{x}\in \Gamma$, and any $x\in\RR^n$ satisfying $\dist(x,\bar{x})<\epsilon$ and $x\in[f(\bar{x}) < f < f(\bar{x})+v]$, the following inequality holds:
    \begin{align}
        \psi '(f(x)-f(\bar{x}))\dist(0,\partial f(z))\geq 1.
    \end{align}
\end{lemma}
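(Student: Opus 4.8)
The plan is to obtain the uniformized statement from the pointwise KL property through a standard compactness argument, the only point needing real attention being the construction of a single desingularizing function $\psi$ that serves every point of $\Gamma$ at once. Throughout, write $\mu$ for the constant value of $f$ on $\Gamma$. First I would apply the KL property at each $\bar{x}\in\Gamma$: this produces $v_{\bar{x}}\in(0,+\infty]$, a radius $\rho_{\bar{x}}>0$, and $\psi_{\bar{x}}\in\Xi_{v_{\bar{x}}}$ such that $\psi_{\bar{x}}'(f(x)-\mu)\dist(0,\partial f(x))\geq 1$ whenever $x\in B(\bar{x},\rho_{\bar{x}})\cap[\mu<f<\mu+v_{\bar{x}}]$; here I use that any neighborhood of $\bar{x}$ contains an open ball centered at $\bar{x}$, on which the KL inequality still holds.

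Next, since $\Gamma$ is compact, the open cover $\Set{B(\bar{x},\rho_{\bar{x}}/2)}{\bar{x}\in\Gamma}$ of $\Gamma$ admits a finite subcover, say $\Gamma\subset\bigcup_{i=1}^{m}B(\bar{x}_i,\rho_{\bar{x}_i}/2)$. I would then put $\epsilon\coloneqq\min_{1\leq i\leq m}\rho_{\bar{x}_i}/2>0$, $v\coloneqq\min_{1\leq i\leq m}v_{\bar{x}_i}\in(0,+\infty]$, and define $\psi\coloneqq\sum_{i=1}^{m}\psi_{\bar{x}_i}$ on $[0,v)$, which makes sense because $v\leq v_{\bar{x}_i}$ for every $i$. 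The class $\Xi_v$ is closed under finite sums: continuity, concavity, the $\mathcal{C}^1$ property on $(0,v)$, the normalization $\psi(0)=0$, and strict positivity of the derivative on $(0,v)$ are all inherited by a finite sum. Hence $\psi\in\Xi_v$.

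It remains to check the uniform inequality. Fix $\bar{x}\in\Gamma$ and $x$ with $\dist(x,\bar{x})<\epsilon$ and $\mu<f(x)<\mu+v$. Choose $i$ with $\bar{x}\in B(\bar{x}_i,\rho_{\bar{x}_i}/2)$; then by the triangle inequality and $\epsilon\leq\rho_{\bar{x}_i}/2$ we get $\|x-\bar{x}_i\|\leq\|x-\bar{x}\|+\|\bar{x}-\bar{x}_i\|<\epsilon+\rho_{\bar{x}_i}/2\leq\rho_{\bar{x}_i}$, so $x\in B(\bar{x}_i,\rho_{\bar{x}_i})$; and since $f(\bar{x}_i)=\mu$ and $v\leq v_{\bar{x}_i}$, also $x\in[\mu<f<\mu+v_{\bar{x}_i}]$. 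Thus the KL inequality at $\bar{x}_i$ gives $\psi_{\bar{x}_i}'(f(x)-\mu)\dist(0,\partial f(x))\geq 1$. Finally, since $f(x)-\mu\in(0,v)$ and each $\psi_{\bar{x}_j}'$ is positive there, $\psi'(f(x)-\mu)=\sum_{j=1}^{m}\psi_{\bar{x}_j}'(f(x)-\mu)\geq\psi_{\bar{x}_i}'(f(x)-\mu)$, and multiplying by $\dist(0,\partial f(x))\geq 0$ preserves the estimate, yielding $\psi'(f(x)-\mu)\dist(0,\partial f(x))\geq 1$, as required.

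The argument has no serious obstacle — it is in essence a covering argument — but the step requiring the most care is arranging that the single $\psi$ both lies in $\Xi_v$ and dominates each $\psi_{\bar{x}_i}$ on the interval of interest; summing the $\psi_{\bar{x}_i}$ (rather than taking a pointwise maximum, which need not be $\mathcal{C}^1$) is precisely what secures both. Minor bookkeeping points are the case $v_{\bar{x}}=+\infty$ for some $\bar{x}$ (so that $v$ may equal $+\infty$) and the degenerate case $\partial f(x)=\emptyset$, in which $\dist(0,\partial f(x))=+\infty$ and the inequality is trivial.
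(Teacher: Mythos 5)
Your proof is correct and follows essentially the same route as the argument the paper relies on (Lemma~6 of Bolte et al., which the lemma is cited from verbatim): cover $\Gamma$ by finitely many KL neighborhoods via compactness, take $\epsilon$ and $v$ as the minima, and sum the individual desingularizing functions to get a single $\psi\in\Xi_v$ dominating each $\psi_{\bar{x}_i}'$. No gaps; the verification via the triangle inequality and the positivity of each $\psi_{\bar{x}_j}'$ is exactly the standard argument.
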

\section{Proposed Algorithm: Approximate Bregman Proximal Gradient Algorithm with Variable Metric Armijo--Wolfe Line Search}\label{sec:proposed-algorithm}
Throughout this paper, we make the following assumptions.
\begin{assumption}\label{asmp:function-condition}\quad
    \begin{enumerate}
        \item $\phi\in\mathcal{G}(C)$ with $\cl C = \cl \dom \phi$ is $\mathcal{C}^2$ on $C = \interior\dom \phi$.
        \item $f:\RR^n\rightarrow (-\infty,+\infty]$ is proper and lower semicontinuous with $\dom \phi \subset \dom f$ and $\mathcal{C}^1$ on $C$.
        \item $g:\RR^n\rightarrow (-\infty,+\infty]$ is proper, lower semicontinuous, and convex with $C \subset \dom g$.
        \item $\Psi^*\coloneqq \inf_{x\in \cl C}\Psi(x)>-\infty$.
        \item For any $x\in\interior\dom\phi$ and $\lambda>0$, $u \mapsto
        \lambda g(u) + \frac{1}{2}\langle \nabla^2 \phi(x)(u-x),u-x\rangle$ is supercoercive, that is,
        \begin{align}
            \lim_{\|u\|\to\infty}\frac{\lambda g(u) + \frac{1}{2}\langle \nabla^2 \phi(x)(u-x),u-x\rangle}{\|u\|} = \infty.
        \end{align}
    \end{enumerate}
\end{assumption}
\cref{asmp:function-condition}(i)-(iv) are standard assumptions for Bregman-type algorithms~\cite{Bolte-2018,Takahashi} and are generally satisfied in practice.
For any $x \in C$, $\partial (g + \delta_{\cl{C}})(x) = \partial g (x) + \partial\delta_{\cl{C}}(x) = \partial g(x)$ holds because $x$ is an interior point of $C$ and $\dom g$ from \cref{asmp:function-condition}(iii) and $\partial\delta_{\cl{C}}(x) = \{0\}$.
For example, \cref{asmp:function-condition}(v) holds if $\phi$ is strongly convex. Note that we will assume the strong convexity of $\phi$ in \Cref{asmp:strongly-convex}.

\subsection{Approximate Bregman Proximal Gradient Algorithm}
Let $\phi\in\mathcal{G}(C)$ be $\mathcal{C}^2$ on $C$. Takahashi and Takeda~\cite{Takahashi} define the approximate Bregman distance $\tilde{D}_{\phi}(u,x)\geq 0$, using a second-order approximation of $\phi(u)$ for $u \in\dom\phi$ around point $x \in \interior\dom\phi$, as
\begin{align}
    \tilde{D}_{\phi}(u,x) \coloneqq \frac{1}{2}\langle \nabla^2 \phi(x)(u-x),u-x\rangle\simeq D_{\phi}(u,x). \label{App-Breg-dist}
\end{align}
Note that $D_{\phi}(u,x)\leq \tilde{D}_{\phi}(u,x)$ or $D_{\phi}(u,x)\geq \tilde{D}_{\phi}(u,x)$ does not necessarily hold for any $x$ and $u$. Therefore, a line search was incorporated into the proposed algorithm.

The Bregman proximal gradient mapping~\cite{Bolte-2018} at a point $x\in C$ for a parameter $\lambda > 0$ is defined by
\begin{align}
    \mathcal{T}_{\lambda}(x)\coloneqq \argmin_{u\in\cl C}\left\{\langle \nabla f(x), u- x \rangle + g(u) + \frac{1}{\lambda}D_{\phi}(u,x)\right\}.\label{prox-Bregman}
\end{align}
Instead of \eqref{prox-Bregman}, the approximate Bregman proximal gradient mapping~\cite{Takahashi} at a point $x\in C$ is defined by
\begin{align}
    \tilde{\mathcal{T}}_{\lambda}(x)\coloneqq \argmin_{u\in\cl C}\left\{\langle \nabla f(x), u- x \rangle + g(u) + \frac{1}{\lambda}\tilde{D}_{\phi}(u,x)\right\}.\label{prox-Ap-Bregman}
\end{align}
Using \cref{asmp:function-condition}(iii) and the positive semidefiniteness of $\nabla^2\phi$, \eqref{prox-Ap-Bregman} is a convex optimization problem. 
\begin{assumption}\label{asmp:feasibility}
    For any $x\in C$ and any $\lambda >0$, $\tilde{\mathcal{T}}_{\lambda}(x)\subset C$ holds.
\end{assumption}
\cref{asmp:feasibility} ensures that the points generated by ABPG-VMAW are feasible.
Obviously, when $C \equiv \RR^n$, \cref{asmp:feasibility} holds.
In the same discussion as~\cite[p. 2136]{Bolte-2018} and~\cite[p.235]{Takahashi}, if $\phi$ is strongly convex, the envelope function $\inf_{u \in\cl{C}}\left\{\langle\nabla f(x), u-x\rangle + g(u) + \frac{1}{\lambda}\tilde{D}_\phi(u,x)\right\}$ is prox-bounded from~\cite[Exercise 1.24]{rockafellar2009variational}.
We have the following well-posedness result.
\begin{lemma}[Well-posedness of $\tilde{\mathcal{T}}_{\lambda}$~{\cite[Lemma 12]{Takahashi}}]\label{lem:feasibility}
    Suppose that \cref{asmp:function-condition,asmp:feasibility} hold. For any $x\in\interior\dom\phi$ and any $\lambda>0$, the approximate Bregman proximal gradient mapping $\tilde{\mathcal{T}}_{\lambda}(x)$ is a nonempty compact subset of $C$.
\end{lemma}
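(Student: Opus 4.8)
The plan is to recognize \eqref{prox-Ap-Bregman} as the minimization of a proper, lower semicontinuous, coercive (indeed supercoercive) convex function, so that the direct method of the calculus of variations yields both existence and compactness of the argmin set, after which \cref{asmp:feasibility} upgrades the inclusion from $\cl C$ to $C$.

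First I would fix $x\in\interior\dom\phi = C$ and $\lambda>0$, and set
\begin{align}
    h(u) \coloneqq \langle \nabla f(x), u-x\rangle + g(u) + \frac{1}{\lambda}\tilde{D}_\phi(u,x) + \delta_{\cl C}(u),
\end{align}
so that $\tilde{\mathcal{T}}_\lambda(x) = \argmin_{u\in\RR^n} h(u)$. I would then verify that $h$ is proper, lower semicontinuous, and convex: $g$ is proper, lsc, and convex by \cref{asmp:function-condition}(iii); $\cl C$ is a nonempty closed convex set, so $\delta_{\cl C}$ is proper, lsc, and convex; the first term is affine and continuous; and $\tilde{D}_\phi(\cdot,x) = \frac{1}{2}\langle \nabla^2\phi(x)(\cdot-x),\cdot-x\rangle$ is a continuous convex quadratic, since $\nabla^2\phi(x)$ is positive semidefinite because $\phi$ is convex and $\mathcal{C}^2$ on $C$ by \cref{asmp:function-condition}(i). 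Properness follows from $h(x) = g(x) < +\infty$, using $x\in C\subset\dom g$.

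Next I would establish coercivity of $h$. By \cref{asmp:function-condition}(v), $\lambda g(u) + \frac{1}{2}\langle \nabla^2\phi(x)(u-x),u-x\rangle$ is supercoercive, and dividing by $\lambda>0$ shows $g(u) + \frac{1}{\lambda}\tilde{D}_\phi(u,x)$ is supercoercive as well. Since $|\langle \nabla f(x),u-x\rangle| \le \|\nabla f(x)\|(\|u\|+\|x\|)$ grows at most linearly in $\|u\|$ and $\delta_{\cl C}(u)\ge 0$, it follows that $h(u)/\|u\|\to+\infty$ as $\|u\|\to\infty$; in particular $h$ is coercive, so each sublevel set $\{u : h(u)\le\alpha\}$ is bounded, and, being closed by lower semicontinuity of $h$, is compact.

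Finally, since $h$ is proper, lsc, and coercive, it attains its infimum, hence $\tilde{\mathcal{T}}_\lambda(x)\neq\emptyset$; moreover $\tilde{\mathcal{T}}_\lambda(x) = \{u : h(u)\le\inf_{v}h(v)\}$ is a bounded closed sublevel set, hence compact. The inclusion $\tilde{\mathcal{T}}_\lambda(x)\subset C$ is precisely the content of \cref{asmp:feasibility}. I do not expect a genuine obstacle here, as this is a routine direct-method argument; the only point requiring a little care is to channel \cref{asmp:function-condition}(v) correctly so that the affine term $\langle\nabla f(x),u-x\rangle$ does not destroy coercivity, which is handled by noting that this term grows only linearly while assumption (v) supplies superlinear growth of the remaining part of $h$.
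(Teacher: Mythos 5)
The paper itself does not prove this lemma—it is quoted directly from \cite[Lemma 12]{Takahashi}, with only a remark about prox-boundedness preceding it—and your direct-method argument (properness, lower semicontinuity and convexity of the objective, supercoercivity supplied by \cref{asmp:function-condition}(v) surviving the affine term $\langle\nabla f(x),u-x\rangle$, Weierstrass giving a nonempty compact argmin, and \cref{asmp:feasibility} giving the inclusion in $C$) is correct and is essentially the standard argument behind that citation. No gaps.
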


\subsection{Variable Metric Armijo--Wolfe Line Search}
Since $\Psi(y^k)\leq\Psi(x^k)$ is not necessarily guaranteed for the solution of the subproblem $y^k\in\tilde{\mathcal{T}}_{\lambda}(x^k)$ with any $\lambda > 0$, Takahashi and Takeda~\cite{Takahashi} introduced the line search procedure for ABPG. To ensure global convergence, we improved the condition of the line search procedure. The new condition is inspired by the line search method based on the Armijo--Wolfe condition proposed by Lewis and Overton~\cite{Lewis2013} and Miantao \etal~\cite{Miantao2024}, and on the Armijo-like line search method introduced by Bonettini \etal~\cite{Bonettini_2017}. We also execute an update step to take a point corresponding to a smaller value of the objective function at the end of each iteration, inspired by Bonettini \etal~\cite{Bonettini_2017}.

In order to define the search direction $d^k = y^k - x^k$, we solve the subproblem $y^{k} \in \tilde{\mathcal{T}}_{\lambda}(x^k)$.
Let $0<c_1<c_2<1$ and $\xi^k \in \partial g(x^k)$. To ensure that $\Psi(x^{k+1})$ is sufficiently smaller than $\Psi(x^k)$, we impose the following condition on $t$:
\begin{align}
    &\Psi(x^{k} + t d^{k}) + \delta_{\cl{C}}(x^k + t d^k)\\ &\qquad< \Psi(x^{k}) + c_1 t \left(\langle \nabla f(x^{k}), d^{k}\rangle +g(x^k+d^k) - g(x^k) + \frac{1}{2\lambda}\langle \nabla^2 \phi (x^k)d^{k}, d^{k} \rangle\right). \label{condition1}
\end{align}
To ensure that $x^{k+1} \in C$, we use $\delta_{\cl{C}}(x^k + t d^k)$.
Furthermore, to avoid excessively small step-sizes $t_k > 0$, which could slow down convergence, we impose the condition given by
\begin{align}
    \langle \nabla f(x^k+td^k) + \xi^k , d^k \rangle > c_2 \langle\nabla f(x^k) + \xi^k, d^k\rangle.\label{condition2}
\end{align}
We consider $t_k$ as $t$ satisfying both \eqref{condition1} and \eqref{condition2} simultaneously. 
Here, by rearranging the inequalities of the line search procedure, we define
\begin{align}
    A_k(t) &\coloneqq \Psi(x^{k} + t d^{k}) + \delta_{\cl{C}}(x^k + t d^k) - \Psi(x^{k})\\
    &\quad- c_1 t \left(\langle \nabla f(x^{k}), d^{k}\rangle + g(x^k+d^k) - g(x^k) + \frac{1}{2\lambda}\langle \nabla^2 \phi (x^k)d^{k}, d^{k} \rangle\right), \label{ccondition1}\\
    W_k(t) &\coloneqq \langle \nabla f(x^k+td^k) + \xi^k , d^k \rangle - c_2 \langle \nabla f(x^k) + \xi^k , d^k \rangle.\label{ccondition2}
\end{align}
The line search conditions \eqref{condition1} and \eqref{condition2} can be rewritten as $A_k(t) < 0$ and $W_k(t) > 0$, respectively.

Now we are ready to describe the proposed algorithm and its line search procedure for solving~\eqref{problem}.
The subproblem $\tilde{\mathcal{T}}_{\lambda}(x^k)$ on line~\ref{line:subproblem} is convex, and it is strongly convex if $\phi$ is strongly convex (see also \Cref{asmp:strongly-convex}). To obtain the step-size $t_k$ satisfying $A_k(t_k) < 0$ and $W_k(t_k) > 0$ on line~\ref{line:step-size}, we can use, for example, the bisection method (see, for more details, Section~\ref{appendix:implementation-line-search}). Moreover, $A_k(t_k) < 0$ implies $x^k + t_kd^k \in \cl{C}$ because of the term $\delta_{\cl{C}}(x^k + t_k d^k)$ of $A_k(t_k)$.

\begin{remark}[Comparison with existing line search conditions]\label{remark:line-search}
    For simplicity, we assume $C = \RR^n$, so that $\delta_{\cl{C}} \equiv 0$.
    Our variable metric line search condition~\eqref{condition1} allows a larger $t$ than  the classical Armijo condition:
    \begin{align}
        &\Psi(x^{k} + t d^{k})\\
        &\qquad < \Psi(x^{k}) + c_1 t(\langle \nabla f(x^{k}), d^{k}\rangle + g(x^k + d^k) - g(x^k))\\
        &\qquad \leq \Psi(x^{k}) + c_1 t \left(\langle \nabla f(x^{k}), d^{k}\rangle +g(x^k+d^k) - g(x^k) + \frac{1}{2\lambda}\langle \nabla^2 \phi (x^k)d^{k}, d^{k} \rangle\right),
    \end{align}
    where the second inequality follows from the variable metric $\frac{1}{2\lambda}\langle \nabla^2 \phi (x^k)d^{k}, d^{k} \rangle \geq 0$.
    When $\phi$ is strongly convex, the above second inequality holds with strict inequality for $d^k \neq 0$.
    Moreover, the parameter $\lambda$ can be chosen freely.
    As a practical guideline for choosing $\lambda$, we refer the reader to~\cref{remark:l-smad}.

    On the other hand, our curvature condition~\eqref{condition2}, which uses $\xi^k$ on both sides, differs from the existing one.
    Although this may seem somewhat odd at first glance, it will play an important role in the proof of \cref{lem:inf-of-t}. When $g$ is nonsmooth, its subgradient typically fails to satisfy standard regularity conditions such as Lipschitz continuity, which is why $\xi^k$ appears on both sides.
\end{remark}

Although we can choose any $\lambda > 0$, it is better to use $\lambda < 1/L$ for some $L$ as follows (see, for specific examples, Section~\ref{sec:numerical-experiments}).
\begin{remark}\label{remark:l-smad}
    As already mentioned in~\cref{remark:line-search}, the parameter $\lambda$ can be chosen to be any positive scalar. We now provide a practical guideline for selecting $\lambda$ to reduce the number of iterations.
    In practice, it is better to choose $\lambda < 1 / L$, where $L > 0$ is a parameter given by the smooth adaptable property, \ie, the pair $(f,\phi)$ is said to be $L$-\emph{smooth adaptable} (for short: $L$-\emph{smad})~\cite{Bolte-2018} if there exists $L>0$ such that both $L\phi -f$ and $L\phi+f$ are convex on $C$.
    The $L$-smad property provides the first-order approximation of $f$ by its descent lemma~\cite[Lemma 2.1]{Bolte-2018}, which may reduce the number of iterations and computation time (see~\cref{fig:l-smad-varying}).
    Moreover, when $f$ and $\phi$ are $\mathcal{C}^2$, the pair $(f, \phi)$ is $L$-smad if and only if $-L \nabla^2 \phi(x)\preceq\nabla^2 f(x) \preceq L \nabla^2 \phi(x)$ holds for any $x \in C$. In order to achieve superior performance, it is recommended to choose a smaller $L$ and a $\phi$ that shares a similar structure with $f$. See, for more examples of the $L$-smad property,~\cite[Lemma 5.1]{Bolte-2018},~\cite[Lemmas 7 and 8]{Bauschke2017-hg}, \cite[Proposition 24]{Takahashi}, \cite[Proposition 2.1]{Mukkamala2019-mk}, \cite[Theorem 4.1]{Takahashi2026-rv}, \cite[Theorem 1]{Takahashi2023-uh}, and \cite[Propositions 2.1 and 2.3]{Dragomir2021-rv}.
\end{remark}

\begin{algorithm}[!tp]
    \caption{Approximate Bregman proximal gradient algorithm with variable metric Armijo--Wolfe line search (ABPG-VMAW)}
    \DontPrintSemicolon
    \label{alg:ABPG-VMAW}
    \KwIn{$x^0 \in \RR^n$, $0<c_1<c_2<1$, $\lambda > 0$}

    \For{$k = 0, 1, 2, \dots$}{
        $y^{k} \gets \tilde{\mathcal{T}}_{\lambda}(x^k)$\; \label{line:subproblem}
        $d^{k} \gets y^{k} - x^{k}$\;
        Compute $t_k$ such as $A_k(t_k) < 0$ and $W_k(t_k) > 0$ hold.\; \label{line:step-size}

        $x^{k+1} \gets \left\{\begin{array}{ll}
             y^{k}&\text{if} \quad \Psi(y^k)<\Psi(x^{k} + t_k d^{k}),  \\
             x^{k} + t_k d^{k}& \text{otherwise}.
        \end{array}\right.$\;\label{line:ABPG-VMAW-min-obj}
    }
\end{algorithm}

In the next section, we demonstrate that the search direction and step-size in the line search are well-defined and that the sequence of points generated by ABPG-VMAW globally converges to a stationary point.

\section{Convergence Analysis}\label{sec:convergence-analysis}
Throughout this section, we make the following assumption.
\begin{assumption}\label{asmp:strongly-convex}
    For a positive number $\sigma>0$, $\phi$ is $\sigma$-strongly convex on $C$.
\end{assumption}
Under Assumption~\ref{asmp:strongly-convex}, since $\tilde{\mathcal{T}}_{\lambda}(x)$ is strongly convex and closed, it has a unique minimizer.
If $\phi$ is convex but not strongly convex, one can enforce strong convexity by adding the quadratic term $\frac{1}{2}\|\cdot\|^2$. When $g \equiv 0$, this modification does not affect the difficulty of the subproblems. When $g \not\equiv 0$, we are merely adding a quadratic and separable term $\frac{1}{2}\|\cdot\|^2$ to the subproblem that already contains a quadratic term. Therefore, the subproblems remain relatively easy to solve.

\subsection{Properties of Proposed Algorithm}
We first show the search direction property. More precisely, we prove that $d$ is a descent direction. The following inequality is a modified version of~\cite[Proposition 15]{Takahashi}.
\begin{proposition}[Search direction property]\label{Search_direction_property}
    Suppose that \cref{asmp:function-condition,asmp:feasibility,asmp:strongly-convex} hold. For any $x \in\interior\dom\phi$, let $\xi\in\partial g(x)$. For any $\lambda > 0$ and $d = y - x$ defined by
    \begin{align}
        y = \tilde{\mathcal{T}}_{\lambda}(x)\label{subproblem-prop}
    \end{align}
    we have
    \begin{align}
        \langle\nabla f(x)+\xi, d\rangle\leq\langle\nabla f(x), d\rangle + g(x+d) - g(x) \leq -\frac{1}{\lambda}\langle\nabla^2\phi(x)d,d\rangle < 0. \label{ineq:search-direction}
    \end{align}
\end{proposition}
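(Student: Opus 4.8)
\emph{Proof plan.} The plan is to read off the first-order optimality condition of the subproblem~\eqref{subproblem-prop} and then feed it into two applications of the convexity of $g$ and one of the strong convexity of $\phi$.

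\textbf{Step 1 (optimality condition).} Under \cref{asmp:function-condition,asmp:strongly-convex} the objective of~\eqref{prox-Ap-Bregman}, namely $u \mapsto \langle \nabla f(x), u - x\rangle + g(u) + \frac{1}{2\lambda}\langle \nabla^2\phi(x)(u-x), u-x\rangle + \delta_{\cl C}(u)$, is proper, lower semicontinuous and (strongly) convex, and by \cref{lem:feasibility} its unique minimizer $y$ satisfies $y \in C = \interior\dom\phi$. Applying Fermat's rule and the convex subdifferential sum rule — whose qualification condition holds because the linear and quadratic terms are finite-valued everywhere and the nonempty open set $C$ is contained in $\dom g \cap \cl C$ — yields $0 \in \nabla f(x) + \partial g(y) + \frac{1}{\lambda}\nabla^2\phi(x)(y-x) + \partial\delta_{\cl C}(y)$. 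Since $y$ lies in the open set $C$, one has $\partial\delta_{\cl C}(y) = \{0\}$, so there is $\zeta \in \partial g(y)$ with $\nabla f(x) + \zeta = -\frac{1}{\lambda}\nabla^2\phi(x)\,d$, where $d = y - x$.

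\textbf{Step 2 (the chain of inequalities).} For the middle inequality, convexity of $g$ at $y$ with subgradient $\zeta$ gives $g(x) \geq g(y) + \langle \zeta, x - y\rangle$, i.e.\ $g(x+d) - g(x) \leq \langle \zeta, d\rangle$; adding $\langle\nabla f(x), d\rangle$ to both sides and substituting the identity from Step 1 gives $\langle \nabla f(x), d\rangle + g(x+d) - g(x) \leq \langle \nabla f(x) + \zeta, d\rangle = -\frac{1}{\lambda}\langle \nabla^2\phi(x) d, d\rangle$. For the first inequality, convexity of $g$ at $x$ with the subgradient $\xi \in \partial g(x)$ gives $g(x+d) - g(x) \geq \langle \xi, d\rangle$, hence $\langle \nabla f(x) + \xi, d\rangle \leq \langle \nabla f(x), d\rangle + g(x+d) - g(x)$. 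Finally, $\sigma$-strong convexity of $\phi$ yields $\langle \nabla^2\phi(x)d, d\rangle \geq \sigma\|d\|^2 > 0$ whenever $d \neq 0$, which makes the right-most quantity strictly negative. (If $d = 0$ then $x = y = \tilde{\mathcal{T}}_\lambda(x)$ is a fixed point of the mapping, which is precisely the stationary/stopping situation; the final strict inequality is understood in the nondegenerate case $d \neq 0$.)

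\textbf{Main obstacle.} Nothing here is deep. The only step requiring some care is Step 1: ensuring that the convex subdifferential sum rule applies and that no surviving normal-cone term appears, which is exactly where $y \in C$ together with $C \subset \dom g$ (that is, \cref{asmp:function-condition}(iii) and \cref{asmp:feasibility} via \cref{lem:feasibility}) are used. The remaining bookkeeping — the degenerate case $d = 0$ in the last inequality and the two elementary uses of convexity of $g$ — is routine.
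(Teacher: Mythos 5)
Your proof is correct and takes essentially the same route as the paper's: the first-order optimality condition of the subproblem, convexity of $g$ used once at $x$ (with $\xi$) and once at $y$ (with the subgradient coming from optimality), and strong convexity of $\phi$ for the strict sign; the only cosmetic difference is that you split $\partial(g+\delta_{\cl{C}})(y)$ by the sum rule with $\partial\delta_{\cl{C}}(y)=\{0\}$, whereas the paper keeps $\partial(g+\delta_{\cl{C}})(y)$ and uses $\delta_{\cl{C}}(x)=\delta_{\cl{C}}(y)=0$ directly. Your explicit caveat that the final strict inequality requires $d\neq 0$ is a point the paper leaves implicit, but it does not change the argument.
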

\begin{proof}
    Since $g$ is convex, we have
    \begin{align}
        \langle \xi, y - x \rangle \leq g(y) - g(x),
    \end{align}
    which implies
    \begin{align}
        \langle\nabla f(x)+\xi, d\rangle\leq\langle\nabla f(x), d\rangle + g(x+d) - g(x).
    \end{align}
    From the first-order optimality condition of~\eqref{subproblem-prop}, we have
    \begin{align}
       -\nabla f(x) - \frac{1}{\lambda}\nabla^2\phi(x)(y - x) \in \partial( g+\delta_{\cl{C}})(y). \label{cond:first-optimality-subproblem}
    \end{align}
    Since $g$ is convex and $\delta_{\cl{C}}(x) = \delta_{\cl{C}}(y) = 0$ from \cref{lem:feasibility}, it holds that
    \begin{align}
        g(y) - g(x) \leq -\left\langle\nabla f(x) + \frac{1}{\lambda}\nabla^2\phi(x)(y - x),y - x\right\rangle.
    \end{align}
    Therefore, substituting $y \leftarrow x + d$ into the above inequality, we obtain
    \begin{align}
        \langle\nabla f(x), d\rangle + g(x + d) - g(x) &\leq \langle\nabla f(x), d\rangle - \left\langle\nabla f(x)+\frac{1}{\lambda}\nabla^2\phi(x)d,d\right\rangle\\
        &=-\frac{1}{\lambda}\langle\nabla^2\phi(x)d,d\rangle < 0,
    \end{align}
    where the last inequality holds because $\phi$ is strongly convex.
\end{proof}
When $t$ satisfies \eqref{condition1}, we guarantee that the objective function value decreases.
\begin{lemma}[Sufficient decrease property]\label{lem:value-reduction}
    Suppose that \cref{asmp:function-condition,asmp:feasibility,asmp:strongly-convex} hold and that $t > 0$ satisfies~\eqref{condition1}. For any $\lambda > 0$, $x \in\interior\dom\phi$ and $d = y - x$ defined by~\eqref{subproblem-prop}, the following inequality holds:
    \begin{align}
        \Psi(x^+) - \Psi(x) \leq -\frac{c_1 t}{2\lambda}\langle\nabla^2\phi(x)d,d\rangle \leq 0, \label{ineq:sufficient-decrease}
    \end{align}
    where
    \begin{align}
        x^+ = \left\{\begin{array}{ll}
             y,&\text{if} \quad \Psi(y)<\Psi(x + t d), \\
             x + t d,& \text{otherwise}.
        \end{array}\right. \label{ineq:obj-decrease}
    \end{align}
\end{lemma}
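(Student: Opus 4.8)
The plan is to chain together the line-search condition~\eqref{condition1} with the search-direction estimate of \cref{Search_direction_property}. The crucial point is to bound the bracketed factor that multiplies $c_1 t$ on the right-hand side of~\eqref{condition1}. By \cref{Search_direction_property} we have $\langle\nabla f(x),d\rangle + g(x+d) - g(x)\le -\frac{1}{\lambda}\langle\nabla^2\phi(x)d,d\rangle$, so adding $\frac{1}{2\lambda}\langle\nabla^2\phi(x)d,d\rangle$ to both sides gives
\begin{align*}
    \langle\nabla f(x),d\rangle + g(x+d) - g(x) + \frac{1}{2\lambda}\langle\nabla^2\phi(x)d,d\rangle \;\le\; -\frac{1}{2\lambda}\langle\nabla^2\phi(x)d,d\rangle .
\end{align*}
Since $c_1 t>0$, multiplying through by $c_1 t$ and inserting this into~\eqref{condition1} yields $\Psi(x+td)+\delta_{\cl C}(x+td) < \Psi(x) - \frac{c_1 t}{2\lambda}\langle\nabla^2\phi(x)d,d\rangle$.

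Next I would deal with the selection rule defining $x^+$ in~\eqref{ineq:obj-decrease}. A short case split shows $\Psi(x^+)\le\Psi(x+td)$ in either branch: if $\Psi(y)<\Psi(x+td)$ then $x^+=y$ and $\Psi(x^+)=\Psi(y)<\Psi(x+td)$, and otherwise $x^+=x+td$ with equality. Here $\Psi(y)$ is a genuine finite real number because $y=\tilde{\mathcal{T}}_{\lambda}(x)\in C\subset\dom f\cap\dom g$ by \cref{lem:feasibility}. Combining this with $\delta_{\cl C}\ge 0$ and the bound from the previous paragraph gives
\begin{align*}
    \Psi(x^+)-\Psi(x) \;\le\; \Psi(x+td)+\delta_{\cl C}(x+td)-\Psi(x) \;<\; -\frac{c_1 t}{2\lambda}\langle\nabla^2\phi(x)d,d\rangle ,
\end{align*}
which is the first inequality of~\eqref{ineq:sufficient-decrease}. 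The second inequality, $-\frac{c_1 t}{2\lambda}\langle\nabla^2\phi(x)d,d\rangle\le 0$, follows at once from $c_1,t,\lambda>0$ and $\nabla^2\phi(x)\succeq\sigma I$ by \cref{asmp:strongly-convex}, so that $\langle\nabla^2\phi(x)d,d\rangle\ge\sigma\|d\|^2\ge 0$.

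I do not expect any real obstacle here: the proof is essentially a two-line inequality chain once \cref{Search_direction_property} is invoked. The only points that merit a sentence of justification are (i) that finiteness of the right-hand side of~\eqref{condition1} forces $\delta_{\cl C}(x+td)=0$, i.e.\ $x+td\in\cl C$, so that $\Psi(x+td)$ is well-defined and finite, and (ii) that the ``pick the $\Psi$-smaller of $y$ and $x+td$'' rule never produces an objective value exceeding $\Psi(x+td)$, which is exactly the case split above. Everything else is arithmetic.
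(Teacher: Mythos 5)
Your proof is correct and follows essentially the same route as the paper: combine the line-search condition~\eqref{condition1} with the bound from \cref{Search_direction_property}, note that satisfying~\eqref{condition1} forces $\delta_{\cl{C}}(x+td)=0$, and use the selection rule~\eqref{ineq:obj-decrease} to get $\Psi(x^+)\le\Psi(x+td)$ before concluding nonnegativity of $\langle\nabla^2\phi(x)d,d\rangle$. The only cosmetic difference is that you rearrange the search-direction inequality before inserting it into~\eqref{condition1}, whereas the paper chains the inequalities directly; the content is identical.
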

\begin{proof}
    Let $\xi\in\partial g(x)$. Because \eqref{condition1} holds, $\delta_{\cl{C}}(x+td) = 0$. From $y = x + d$, we have
    \begin{align}
        \Psi(x^+) - \Psi(x)&\leq \Psi(x+td) - \Psi(x) \\
        &< c_1 t \left(\langle \nabla f(x),d\rangle + g(x+d) - g(x) + \frac{1}{2\lambda}\langle \nabla^2 \phi (x)d,d  \rangle \right)\\
        &\leq -\frac{c_1 t}{2\lambda}\langle\nabla^2\phi(x)d,d\rangle \leq 0,
    \end{align}
    where the first inequality holds from~\eqref{ineq:obj-decrease}, the second inequality holds from~\eqref{condition1}, and the last inequality holds from~\eqref{ineq:search-direction}.
\end{proof}
The above lemma indicates that the objective function value is reduced at every step.

\subsection{Global Subsequential Convergence}
In this subsection, we discuss global subsequence convergence. In other words, we show that any accumulation point of a sequence $\{x^k\}_{k\in\NN}$ generated by ABPG-VMAW is a stationary point of~\eqref{problem}. 
We use the limiting subdifferential and define the stationary point, inspired by Fermat's rule~\cite[Theorem 10.1]{rockafellar2009variational}.
\begin{definition}
    A point $x^*\in\RR^n$ is called a stationary point of $\Psi$ if
    \begin{align}
        0\in\nabla f(x^*) + \partial (g+\delta_{\cl{C}})(x^*).
    \end{align}
\end{definition}
Note that $\partial \delta_{\cl{C}}(x) = \{0\}$ if $x \in C$ because $C$ is open. When $x^* \in C$, $\nabla f(x^*) + \partial (g+\delta_{\cl{C}})(x^*) = \nabla f(x^*) + \partial g(x^*)$ from \cref{asmp:function-condition}(iii).
We make the following assumption.
\begin{assumption}\quad\label{asmp:global-subsequential}
\begin{enumerate}
    \item The objective function $\Psi$ is level-bounded, \ie, for any  $r\in\RR$, lower level sets $\Set{x\in\RR^n}{\Psi(x)\leq r}$ is bounded.
    \item The step-size $t_k > 0$ at every $k$th iteration satisfies
    $A_k(t_k) < 0$ and $W_k(t_k) > 0$.
    \item The step-size $t_k > 0$ is upper bounded, \ie, there exists $\bar{t} < \infty$ such that $t_k < \bar{t}$ holds for any $k \in \NN$.
\end{enumerate}
\end{assumption}
Assumption~\ref{asmp:global-subsequential}(i) is often assumed in nonsmooth optimization when the problem includes nonsmooth lower semicontinuous functions~\cite{Bolte-2018,Takahashi}. In fact, a lower semicontinuous, level-bounded, and proper function has a minimum~\cite[Theorem 1.9]{rockafellar2009variational}. Assumption~\ref{asmp:global-subsequential}(ii) would often hold when the influence of $f$ is dominant compared to that of $g$. We will discuss this issue in more detail in Section~\ref{appendix:implementation-line-search}. Moreover, under Assumption~\ref{asmp:global-subsequential}(ii), Assumption~\ref{asmp:global-subsequential}(iii) always holds because $\Psi$ is bounded below from \cref{asmp:function-condition}(iv) and the right-hand side of \eqref{condition1} is unbounded below. In this case, we have  $\|t_{k} d^{k}\| \to 0$.

\begin{lemma}\label{lem:diff-convergence}
    Suppose \cref{asmp:function-condition,asmp:feasibility,asmp:strongly-convex,asmp:global-subsequential} hold. Let 
    $\{t_{k}\}_{k\in\NN}$ and $\{x_{k}\}_{k\in\NN}$ be a sequence generated by ABPG-VMAW, $\bar{t}$ be a upper bound of the sequence $\{t_{k}\}_{k\in\NN}$, and $\{d_{k}\}_{k\in\NN}$ be a sequence of search directions in each iteration of ABPG-VMAW. It holds that
    \begin{align}
        \lim_{k\to \infty}\|t_{k}d^{k}\|=0. \label{eq:lim-td-0}
    \end{align}
\end{lemma}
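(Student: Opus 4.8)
The plan is the standard energy/telescoping argument: use the sufficient decrease property to show that the weighted series $\sum_k t_k\|d^k\|^2$ is summable, and then use the uniform upper bound on the step-sizes to upgrade this to $\|t_k d^k\|\to0$.

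First I would apply \Cref{lem:value-reduction}. By \Cref{asmp:global-subsequential}(ii) the step-size $t_k$ satisfies $A_k(t_k)<0$, which is exactly the line search condition~\eqref{condition1}; moreover line~\ref{line:ABPG-VMAW-min-obj} of ABPG-VMAW defines $x^{k+1}$ precisely as the point $x^+$ appearing in~\eqref{ineq:obj-decrease}. Hence \Cref{lem:value-reduction} gives, for every $k\in\NN$,
\begin{align*}
    \Psi(x^{k+1})-\Psi(x^k)\leq-\frac{c_1 t_k}{2\lambda}\langle\nabla^2\phi(x^k)d^k,d^k\rangle\leq0 .
\end{align*}
In particular $\{\Psi(x^k)\}_{k\in\NN}$ is non-increasing, and it is bounded below by $\Psi^*$ thanks to \Cref{asmp:function-condition}(iv), so it converges.

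Next I would use \Cref{asmp:strongly-convex}: $\sigma$-strong convexity of $\phi$ on $C$ gives $\nabla^2\phi(x^k)\succeq\sigma I$, whence $\langle\nabla^2\phi(x^k)d^k,d^k\rangle\geq\sigma\|d^k\|^2$ and therefore
\begin{align*}
    \frac{c_1\sigma}{2\lambda}\,t_k\|d^k\|^2\leq\Psi(x^k)-\Psi(x^{k+1}) .
\end{align*}
Summing this over $k=0,1,\dots,N-1$ telescopes the right-hand side to $\Psi(x^0)-\Psi(x^N)\leq\Psi(x^0)-\Psi^*$; letting $N\to\infty$ yields $\sum_{k=0}^{\infty}t_k\|d^k\|^2<\infty$, and in particular $t_k\|d^k\|^2\to0$. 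Finally, using the upper bound $\bar t$ of $\{t_k\}_{k\in\NN}$ from \Cref{asmp:global-subsequential}(iii),
\begin{align*}
    \|t_k d^k\|^2=t_k\bigl(t_k\|d^k\|^2\bigr)\leq\bar t\bigl(t_k\|d^k\|^2\bigr),
\end{align*}
and the right-hand side tends to $0$; hence $\|t_k d^k\|\to0$, which is~\eqref{eq:lim-td-0}.

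The argument is essentially routine; the only point that needs care is the last step, namely passing from $t_k\|d^k\|^2\to0$ — which is all the decrease estimate delivers directly — to $\|t_k d^k\|=t_k\|d^k\|\to0$. This implication genuinely requires the uniform bound $t_k<\bar t$: without it, $t_k\|d^k\|$ need not vanish. This is exactly why \Cref{asmp:global-subsequential}(iii) is assumed (and, as remarked right after \Cref{asmp:global-subsequential}, why it is in fact automatic once $A_k(t_k)<0$ holds, since otherwise the right-hand side of~\eqref{condition1} would be driven to $-\infty$ while $\Psi$ stays bounded below).
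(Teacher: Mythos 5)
Your proposal is correct and follows essentially the same route as the paper: sufficient decrease from \cref{lem:value-reduction}, $\sigma$-strong convexity of $\phi$, telescoping against the lower bound $\Psi^*$, and the bound $t_k<\bar t$ (the paper simply inserts $t_k^2\leq t_k\bar t$ one line earlier to sum $\|t_kd^k\|^2$ directly, whereas you first sum $t_k\|d^k\|^2$ and apply $\bar t$ at the end). No gaps.
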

\begin{proof}
    Substituting $x \gets x^{k}$, $x^+ \gets x^{k+1}$, $d \gets d^{k}$, and $t \gets t_{k}$ into~\eqref{ineq:sufficient-decrease} in \cref{lem:value-reduction}, we have
    \begin{align}
         0\leq\frac{c_1}{2\lambda}\langle\nabla^2\phi(x^{k})t_{k}d^{k},t_{k}d^{k}\rangle\leq
         \frac{c_1 t_{k} \bar{t}}{2\lambda}\langle\nabla^2\phi(x^{k})d^{k},d^{k}\rangle \leq \bar{t}\left(\Psi(x^{k}) - \Psi(x^{{k}+1})\right),
    \end{align}
    where the second inequality holds because of $t_{k}^2 \leq t_{k}\bar{t}$.
    Since $\phi$ is $\sigma$-strongly convex, the above inequality provides
    \begin{align}
        \frac{c_1\sigma}{2\lambda}\|t_{k}d^{k}\|^2\leq\frac{c_1}{2\lambda}\langle\nabla^2\phi(x^{k})t_{k}d^{k},t_{k}d^{k}\rangle\leq \bar{t}\left(\Psi(x^{k}) - \Psi(x^{{k}+1})\right). \label{ineq:tkdk-obj-bound}
    \end{align}
    Summing \eqref{ineq:tkdk-obj-bound} from $k=0$ to $\infty$, we obtain
    \begin{align}
        \frac{c_1\sigma}{2\lambda}\sum_{k=0}^\infty\|t_{k}d^{k}\|^2\leq \bar{t}\sum_{k=0}^\infty \left(\Psi(x^{k}) - \Psi(x^{{k}+1})\right).
    \end{align}
    Using $\Psi^* \coloneqq \inf \Psi(x) > - \infty$ from Assumption~\ref{asmp:function-condition}(iv), we have
    \begin{align}
        \frac{c_1\sigma}{2\lambda}\sum_{k=0}^\infty\|t_{k}d^{k}\|^2 &\leq  \bar{t}\sum_{k=0}^\infty \left(\Psi(x^{k}) - \Psi(x^{{k}+1})\right) \\
        &\leq\bar{t}(\Psi(x^0) - \liminf_{N\to\infty}\Psi(x^{N})) \\
        &\leq\bar{t}(\Psi(x^0) - \Psi^*) < \infty,
    \end{align}
    which implies $\lim_{{k}\to\infty}\|t_{k}d^{k}\|=0$.
\end{proof}
We establish the global subsequential convergence of ABPG-VMAW.
\begin{theorem}[Global subsequential convergence]\label{thm:global-subseqential-convergence}
    Suppose that \cref{asmp:function-condition,asmp:feasibility,asmp:strongly-convex,asmp:global-subsequential} hold. Let $\{x^k\}_{k\in\NN}$ be a sequence generated by ABPG-VMAW. Then, the following statements hold:
    \begin{enumerate}
        \item The sequence $\{x^k\}_{k\in\NN}$ is bounded.
        \item Any accumulation point of $\{x^k\}_{k\in\NN}$ is a stationary point of~\eqref{problem}.
    \end{enumerate} 
\end{theorem}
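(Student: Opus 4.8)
The plan is to prove (i) and (ii) in turn. For (i), observe that \Cref{lem:value-reduction}, applied with $x\gets x^k$, $d\gets d^k$, $t\gets t_k$, yields $\Psi(x^{k+1})\le\Psi(x^k)$ for every $k$; hence the whole sequence lies in the lower level set $\{x\in\RR^n:\Psi(x)\le\Psi(x^0)\}$, which is bounded by \cref{asmp:global-subsequential}(i). That settles (i).

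For (ii), fix an accumulation point $x^*$ and a subsequence $x^{k_j}\to x^*$. (If $d^k=0$ at some iteration, then $y^k=x^k$ and \eqref{cond:first-optimality-subproblem} already gives $-\nabla f(x^k)\in\partial(g+\delta_{\cl C})(x^k)$, so we may assume $d^k\neq0$ for all $k$.) The decisive step is to show $d^{k_j}\to0$. Suppose otherwise; passing to a further subsequence, still written $k_j$, there is $\epsilon>0$ with $\|d^{k_j}\|\ge\epsilon$, and since $\|t_{k_j}d^{k_j}\|\to0$ by \cref{lem:diff-convergence}, this forces $t_{k_j}\to0$. Now I would invoke the curvature condition $W_{k_j}(t_{k_j})>0$: subtracting $\langle\nabla f(x^{k_j})+\xi^{k_j},d^{k_j}\rangle$ from both sides gives
\begin{align}
    \langle\nabla f(x^{k_j}+t_{k_j}d^{k_j})-\nabla f(x^{k_j}),\,d^{k_j}\rangle>(1-c_2)\bigl(-\langle\nabla f(x^{k_j})+\xi^{k_j},\,d^{k_j}\rangle\bigr),
\end{align}
and by \cref{Search_direction_property} together with the $\sigma$-strong convexity of $\phi$ the right-hand side is at least $(1-c_2)\tfrac{\sigma}{\lambda}\|d^{k_j}\|^2$. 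Dividing by $\|d^{k_j}\|>0$ and applying the Cauchy--Schwarz inequality on the left gives $\|\nabla f(x^{k_j}+t_{k_j}d^{k_j})-\nabla f(x^{k_j})\|>(1-c_2)\tfrac{\sigma}{\lambda}\epsilon$. But $x^{k_j}\to x^*$ and $x^{k_j}+t_{k_j}d^{k_j}\to x^*$ (because $\|t_{k_j}d^{k_j}\|\to0$), so continuity of $\nabla f$ near $x^*$ drives the left-hand side to $0$ --- a contradiction. Hence $d^{k_j}\to0$, and therefore $y^{k_j}=x^{k_j}+d^{k_j}\to x^*$.

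The last step is to pass to the limit in the optimality condition \eqref{cond:first-optimality-subproblem}, which reads $-\nabla f(x^{k_j})-\tfrac1\lambda\nabla^2\phi(x^{k_j})d^{k_j}\in\partial(g+\delta_{\cl C})(y^{k_j})$. Since $x^{k_j}\to x^*$, $d^{k_j}\to0$, and $\nabla f$, $\nabla^2\phi$ are continuous near $x^*$, the left-hand side converges to $-\nabla f(x^*)$. As $g+\delta_{\cl C}$ is proper, lower semicontinuous, and convex, the graph of its subdifferential is closed; I would record the short argument via the subgradient inequality (in particular $(g+\delta_{\cl C})(y^{k_j})$ stays bounded, so $x^*\in\dom(g+\delta_{\cl C})$), obtaining $-\nabla f(x^*)\in\partial(g+\delta_{\cl C})(x^*)$. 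Rearranging gives $0\in\nabla f(x^*)+\partial(g+\delta_{\cl C})(x^*)$, so $x^*$ is a stationary point of \eqref{problem}.

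I expect the step $d^{k_j}\to0$ to be the main obstacle, and it is exactly where the curvature condition \eqref{condition2} earns its keep: without it one could have $t_{k_j}\to0$ while $\|d^{k_j}\|$ remains bounded away from $0$, and the above contradiction would not be available. The one point requiring care is regularity at the limit --- the continuity arguments for $\nabla f$ and $\nabla^2\phi$ presume $x^*$ lies where these are defined and continuous (essentially $x^*\in C$) --- which I would either derive from the standing assumptions or state as the precise hypothesis used. All other ingredients (monotone decrease, the search-direction inequality of \cref{Search_direction_property}, and $\|t_kd^k\|\to0$ from \cref{lem:diff-convergence}) are already in hand.
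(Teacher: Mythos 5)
Your proof is correct and takes essentially the same route as the paper: monotone decrease plus level-boundedness for (i), and for (ii) the same three ingredients — the subproblem optimality condition combined with $\sigma$-strong convexity of $\phi$ (giving $\langle\nabla f(x^k)+\xi^k,d^k\rangle\le-\tfrac{\sigma}{\lambda}\|d^k\|^2$), the fact $\|t_kd^k\|\to0$ from \cref{lem:diff-convergence}, and the curvature condition \eqref{condition2} to force $d^{k_j}\to0$, followed by closedness of the convex subdifferential to pass to the limit in \eqref{cond:first-optimality-subproblem}. The only difference is presentational: you argue by contradiction with Cauchy--Schwarz, while the paper extracts convergent subsequences of $d^{k_j}$ and $\xi^{k_j}$ and takes limits in the Wolfe inequality (your variant sidesteps the boundedness of those sequences), and both arguments share the implicit regularity at the limit point ($\nabla f$ continuous there) that you explicitly flag.
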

\begin{proof}
    (i) Since $\Psi(x^{k+1}) \leq \Psi(x^k)$ from \cref{lem:value-reduction} and $\Psi$ is level-bounded, the sequence of points $\{x^k\}_{k\in\NN}$ is bounded.
    
    (ii) Substituting $x \gets x^k$ and $y \gets y^k$ into~\eqref{cond:first-optimality-subproblem} with $y^k = x^k +d^k$ yields
    \begin{align}\label{stationarity-of-subpro}
       -\nabla f(x^k) - \frac{1}{\lambda}\nabla^2\phi(x^k)d^k \in \partial (g + \delta_{\cl{C}})(x^k +d^k).
    \end{align}
    Since $g$ is convex, it follows that for $\xi^k \in \partial g(x^k) = \partial (g+\delta_{\cl{C}})(x^k)$ and $-\nabla f(x^k) - \frac{1}{\lambda}\nabla^2\phi(x^k)d^k \in \partial (g+\delta_{\cl{C}})(x^k + d^k)$,
    \begin{align}
        \left\langle -\nabla f(x^k) - \frac{1}{\lambda}\nabla^2\phi(x^k)d^k - \xi^k , d^k \right\rangle \geq 0.
    \end{align}
    This implies
    \begin{align}
       \langle -\nabla f(x^k) - \xi^k ,d^k \rangle 
       \geq \frac{1}{\lambda}\langle \nabla^2\phi(x^k)d^k , d^k \rangle 
       \geq \frac{\sigma}{\lambda}\|d^k\|^2, \label{ineq01:function-value-convergence}
    \end{align}
    where the last inequality holds because $\phi$ is $\sigma$-strongly convex.
    Let $\bar{x}\in\RR^n$ be an accumulation point of $\{x^k\}_{k\in\NN}$ and let $\{x^{k_j}\}_{j\in\NN}$ be a subsequence such that $x^{k_j}\to \bar{x}$ by the Bolzano--Weierstrass theorem. 
    From \eqref{ineq01:function-value-convergence} and the Cauchy--Schwarz inequality, we have
    \begin{align}
        \frac{\sigma}{\lambda}\|d^k\|^2 \leq \langle -\nabla f(x^k) - \xi^k ,d^k \rangle \leq \| \nabla f(x^k) + \xi^k \| \| d^k \|. \label{ineq:inner-first-order-bound}
    \end{align}
    If $\|\nabla f(x^k) + \xi^k\| = 0$, then $x^k$ becomes a stationary point. 
    We assume $\|\nabla f(x^k) + \xi^k\| > 0$.
    By the triangle inequality, we have $\|\nabla f(x^k) + \xi^k\| \leq \|\nabla f(x^k)\| + \|\xi^k\|$.
    Since the sequence $\{x^k\}_{k\in\NN}$ is bounded, $\|\nabla f(x^k)\|$ is bounded by the extreme value theorem and $\|\xi^k\|$ is also bounded (see, \eg,~\cite[Theorem 1(ii)]{Takahashi2022-ml}). Thus, $\|\nabla f(x^k) + \xi^k\|$ is bounded, \ie, due to~\eqref{ineq:inner-first-order-bound}, $d^k$ is also bounded.
    Thus, there exists a subsequence $\{d^{k_j}\}_{j \in \mathbb{N}}$ such that $d^{k_j} \to \bar{d}$ as $j \to \infty$ by the Bolzano--Weierstrass theorem. Then, by \cref{lem:diff-convergence}, the sequence $\{x^{k_j} + t_{k_j} d^{k_j}\}_{j \in \mathbb{N}}$ also converges to $\bar{x}$.
    
    If $\liminf_{j \to \infty} t_{k_j} > 0$, then it follows by \eqref{eq:lim-td-0} that $\lim_{j \to \infty} \|d^{k_j}\| = 0$. Therefore, we only need to consider the case where $\liminf_{j \to \infty} t_{k_j} = \lim_{j \to \infty} t_{k_j} = 0$.
    Let $\{\xi^{k_j}\}_{j \in \mathbb{N}}$ be a subsequence of $\xi^{k_j} \in \partial g(x^{k_j})$ so that $\xi^{k_j} \to \bar{\xi}$ as $j \to \infty$. Relabeling the indices again if necessary, we can choose the index set $\{k_j\}_{j \in \mathbb{N}}$ such that the sequences $\{x^{k_j}\}_{j \in \mathbb{N}}$, $\{d^{k_j}\}_{j \in \mathbb{N}}$, and $\{\xi^{k_j}\}_{j \in \mathbb{N}}$ converge to $\bar{x}$, $\bar{d}$, and $\bar{\xi}$, respectively.

    From the condition \eqref{condition2}, we have
    \begin{align}
        \langle \nabla f(x^{k_j} + t_{k_j}d^{k_j}) + \xi^{k_j},d^{k_j} \rangle > c_2 \langle \nabla f(x^{k_j}) + \xi^{k_j},d^{k_j} \rangle.
    \end{align}
    As $j\to \infty$, we have
    \begin{align}
        \langle \nabla f(\bar{x}) + \bar{\xi},\bar{d}\rangle \geq  c_2 \langle \nabla f(\bar{x}) +\bar{\xi},\bar{d} \rangle,
    \end{align}
    which implies, due to $0 < c_2 < 1$,
    \begin{align}
        \langle \nabla f(\bar{x}) + \bar{\xi},\bar{d}\rangle \geq 0. \label{ineq:accum-inner-positive}
    \end{align}
    Moreover, from \eqref{ineq01:function-value-convergence}, it holds that
    \begin{align}
       \langle \nabla f(x^{k_j}) + \xi^{k_j} ,d^{k_j} \rangle  \leq -\frac{\sigma}{\lambda}\|d^{k_j}\|^2 \leq 0.
    \end{align}
    Taking the limit as $j\to\infty$ and using~\eqref{ineq:accum-inner-positive}, we have
    \begin{align}
       0 \leq \langle \nabla f(\bar{x}) + \bar{\xi},\bar{d}\rangle  \leq -\frac{\sigma}{\lambda}\|\bar{d}\|^2 \leq 0, 
    \end{align}
    which implies $d^{k_j}\to 0$. Because $f$ and $g$ are lower semicontinuous, from \eqref{stationarity-of-subpro}, we have
    \begin{align}
        0 \in \nabla f(\bar{x}) + \partial (g+\delta_{\cl{C}})(\bar{x}).
    \end{align}
    We conclude that $\bar{x}$ is a stationary point.
\end{proof}
\begin{assumption}\label{asmp:local-Lipschitz}
    $\nabla f$ is Lipschitz continuous on any compact subset of $\RR^n$.
\end{assumption}
\cref{asmp:local-Lipschitz} is weaker than the global Lipschitz continuity for $\nabla f$. Since $\phi$ is $\mathcal{C}^2$, $\nabla\phi$ is Lipschitz continuous on any compact subset of $\RR^n$.
\begin{lemma}[Lower bound of $t_k$]\label{lem:inf-of-t}
    Suppose \cref{asmp:function-condition,asmp:feasibility,asmp:strongly-convex,asmp:global-subsequential,asmp:local-Lipschitz} hold. Let $\{t_k\}_{k\in\NN}$ be a sequence of points generated by ABPG-VMAW. For any $k\in\NN$, there exists $\underline{t}> 0$ such that $t_k>\underline{t}$ holds.
\end{lemma}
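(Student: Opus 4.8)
The plan is to exploit the curvature condition $W_k(t_k)>0$, which is exactly the ingredient of the line search that keeps $t_k$ bounded away from zero; the sufficient--decrease part plays no role here. First I would evaluate~\eqref{ccondition2} at $t=0$ to get
\begin{align}
    W_k(0) = (1-c_2)\langle \nabla f(x^k)+\xi^k,\, d^k\rangle ,
\end{align}
and then invoke the search direction property (\cref{Search_direction_property}) together with the $\sigma$-strong convexity of $\phi$ to obtain
\begin{align}
    \langle \nabla f(x^k)+\xi^k,\, d^k\rangle \leq -\frac{1}{\lambda}\langle \nabla^2\phi(x^k)d^k,d^k\rangle \leq -\frac{\sigma}{\lambda}\|d^k\|^2 .
\end{align}
Since $0<c_2<1$, this yields $W_k(0)\leq -\frac{(1-c_2)\sigma}{\lambda}\|d^k\|^2<0$ whenever $d^k\neq 0$. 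If $d^k=0$ at some iteration, then $W_k(\cdot)\equiv 0$, no $t$ satisfies $W_k(t)>0$, and $x^k$ is already a stationary point of~\eqref{problem}; I therefore set this degenerate case aside (it is vacuous for the bound) and assume $d^k\neq 0$ below.

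The second step is to produce a single compact set carrying all the relevant line segments, so that one Lipschitz constant serves every iteration. By \cref{thm:global-subseqential-convergence}(i) the sequence $\{x^k\}_{k\in\NN}$ is bounded, and the argument in the proof of that theorem (using $\frac{\sigma}{\lambda}\|d^k\|^2\leq\|\nabla f(x^k)+\xi^k\|\,\|d^k\|$ and the boundedness of $\nabla f(x^k)$ and of $\xi^k\in\partial g(x^k)$ on bounded sets) shows that $\{d^k\}_{k\in\NN}$ is bounded as well. Together with the uniform upper bound $t_k<\bar t$ from \cref{asmp:global-subsequential}(iii), all points $x^k+td^k$ with $t\in[0,\bar t]$ and $k\in\NN$ lie in a fixed closed ball $K$, which is compact and convex. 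By \cref{asmp:local-Lipschitz}, $\nabla f$ is Lipschitz on $K$ with a constant $L_f>0$ that does not depend on $k$.

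The final step is a one-line estimate. For $t\in[0,\bar t]$, Cauchy--Schwarz and the Lipschitz bound on $K$ give
\begin{align}
    W_k(t)-W_k(0) = \langle \nabla f(x^k+td^k)-\nabla f(x^k),\, d^k\rangle \leq L_f\, t\,\|d^k\|^2 .
\end{align}
Taking $t=t_k$ and using $W_k(t_k)>0$ from \cref{asmp:global-subsequential}(ii), one gets $0<W_k(0)+L_f t_k\|d^k\|^2$, whence, after cancelling $\|d^k\|^2>0$,
\begin{align}
    t_k > \frac{-W_k(0)}{L_f\|d^k\|^2} \geq \frac{(1-c_2)\sigma}{\lambda L_f} .
\end{align}
Setting $\underline{t}\coloneqq \frac{(1-c_2)\sigma}{\lambda L_f}>0$, which is independent of $k$, completes the argument.

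The statement is short and the computation routine; the only point that needs care is the \emph{uniformity} of the Lipschitz constant, i.e.\ exhibiting a single compact set on which \cref{asmp:local-Lipschitz} can be applied simultaneously for all $k$. This is precisely where the boundedness of $\{x^k\}_{k\in\NN}$ and $\{d^k\}_{k\in\NN}$ and the bound $t_k<\bar t$ are essential; no real difficulty arises once these are assembled. A secondary bookkeeping point is the legitimacy of dividing by $\|d^k\|^2$, which is handled by isolating the degenerate case $d^k=0$ as above.
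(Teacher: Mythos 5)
Your proposal is correct and follows essentially the same route as the paper: both exploit the curvature condition $W_k(t_k)>0$, bound $\langle\nabla f(x^k+t_kd^k)-\nabla f(x^k),d^k\rangle$ by a Lipschitz constant times $t_k\|d^k\|^2$ via \cref{asmp:local-Lipschitz}, and combine this with $\langle\nabla f(x^k)+\xi^k,d^k\rangle\leq-\tfrac{\sigma}{\lambda}\|d^k\|^2$ from \cref{Search_direction_property} to get $t_k\geq\tfrac{(1-c_2)\sigma}{\lambda L_f}$. Your explicit construction of a single compact set containing all segments $x^k+td^k$, $t\in[0,\bar t]$, and your isolation of the degenerate case $d^k=0$ are careful refinements of details the paper leaves implicit, not a different argument.
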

\begin{proof}
    From the condition \eqref{condition2}, for $\xi^k\in\partial g(x^k)$, we have
    \begin{align}
        \langle\nabla f(x^k+t_kd^k) + \xi^k, d^k\rangle > c_2\langle\nabla f(x^k) + \xi^k, d^k\rangle.\label{ineq:wolfe-tk-lower}
    \end{align}
    There exists an $M_1 > 0$ such that the following inequality holds:
    \begin{align}
        M_1 t_k \|d^k\|^2 &\geq \langle\nabla f(x^k+t_kd^k) - \nabla f(x^k), d^k\rangle\\
        &> c_2\langle\nabla f(x^k) + \xi^k, d^k\rangle - \langle\nabla f(x^k) + \xi^k,d^k\rangle\\
        &= -(1-c_2)\langle\nabla f(x^k) + \xi^k,d^k\rangle,
    \end{align}
    where the first inequality holds due to the Cauchy--Schwarz inequality and $\nabla f$ being Lipschitz continuous on any compact subset from \cref{asmp:local-Lipschitz}, and the second inequality holds due to~\eqref{ineq:wolfe-tk-lower}. 
    Moreover, using the inequality \eqref{ineq01:function-value-convergence}, we have
    \begin{align}
        t_k \geq \frac{(1-c_2)\langle-\nabla f(x^k) - \xi^k, d^k\rangle}{M_1\|d^k\|^2} \geq \frac{(1-c_2)\sigma}{M_1\lambda} > 0
    \end{align}
    and therefore $\lim_{k\to\infty}t_k = \frac{(1-c_2)\sigma}{M_1\lambda} > 0$ holds.
\end{proof}

\begin{proposition}\label{lem:d-to-zero}
    Suppose that \cref{asmp:function-condition,asmp:feasibility,asmp:strongly-convex,asmp:global-subsequential,asmp:local-Lipschitz} hold. Let $\{x^k\}_{k\in\NN}$ be a sequence generated by ABPG-VMAW and $\underline{t}$ be a lower bound of $\{t_k\}_{k \in\NN}$.
    Then, $\lim_{k \to \infty}\|d^k\| = 0$ holds.
\end{proposition}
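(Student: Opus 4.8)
The plan is to obtain this as an immediate corollary of the two preceding results. Lemma~\ref{lem:diff-convergence} already gives $\lim_{k\to\infty}\|t_k d^k\| = 0$, and Lemma~\ref{lem:inf-of-t}, which is available here because Assumption~\ref{asmp:local-Lipschitz} has been imposed, provides a uniform lower bound $\underline{t} > 0$ with $t_k > \underline{t}$ for every $k\in\NN$. Since $t_k > 0$ for all $k$, I would simply write $\|d^k\| = \|t_k d^k\|/t_k$ and use the lower bound on the step-sizes to get
\begin{align}
    \|d^k\| = \frac{1}{t_k}\,\|t_k d^k\| \leq \frac{1}{\underline{t}}\,\|t_k d^k\|
\end{align}
for all $k$. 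Letting $k \to \infty$ and invoking~\eqref{eq:lim-td-0} from Lemma~\ref{lem:diff-convergence} then forces $\|d^k\| \to 0$.

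There is essentially no obstacle here: the real work has already been done in Lemmas~\ref{lem:diff-convergence} and~\ref{lem:inf-of-t}. The only point that requires care is that $\underline{t}$ is genuinely positive and \emph{uniform} in $k$, rather than merely $t_k > 0$ for each individual $k$; this is exactly the content of Lemma~\ref{lem:inf-of-t}, whose proof rests on the local Lipschitz continuity of $\nabla f$ (Assumption~\ref{asmp:local-Lipschitz}), the curvature condition~\eqref{condition2}, and the search-direction bound~\eqref{ineq01:function-value-convergence}. Given those inputs, the proposition is a one-line consequence, and in fact it could just as well be absorbed into the proof of the subsequent global convergence theorem, where the stronger conclusion $\|d^k\| \to 0$ (for the full sequence, not only a subsequence as in the proof of Theorem~\ref{thm:global-subseqential-convergence}) will be needed to control the Bregman proximal gradient residual and set up the KL argument.
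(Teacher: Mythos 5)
Your argument is correct: Lemma~\ref{lem:diff-convergence} gives $\|t_k d^k\|\to 0$, Lemma~\ref{lem:inf-of-t} gives a uniform $\underline{t}>0$ with $t_k>\underline{t}$ (and you rightly flag that uniformity in $k$ is the only delicate point, which the proof of that lemma does deliver since the Lipschitz constant $M_1$ is taken on a compact set containing the whole sequence), and dividing yields $\|d^k\|\le \|t_k d^k\|/\underline{t}\to 0$. The paper takes a slightly different route: instead of quoting Lemma~\ref{lem:diff-convergence} and dividing, it re-runs the descent argument, combining the sufficient decrease property (Lemma~\ref{lem:value-reduction}) with $t_k>\underline{t}$ and the $\sigma$-strong convexity of $\phi$ to obtain the per-iteration bound $\frac{c_1\sigma\underline{t}}{2\lambda}\|d^k\|^2\le \Psi(x^k)-\Psi(x^{k+1})$, and then telescopes to get $\sum_k\|d^k\|^2<\infty$, which implies the claim. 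Your proof is shorter and reuses existing results more economically; what the paper's version buys is the explicit inequality just mentioned (labelled~\eqref{C1}) and the summability of $\|d^k\|^2$, and~\eqref{C1} is invoked again several times in the KL-based global convergence proof of Theorem~\ref{thm:point-convergence}, so with your route that inequality would still have to be derived separately there (which is immediate from Lemmas~\ref{lem:value-reduction} and~\ref{lem:inf-of-t}, but worth noting).
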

\begin{proof}
    \cref{lem:inf-of-t} shows there exists a lower bound $\underline{t} := \inf_{k} t_k > 0$.
    From \cref{lem:value-reduction} and $t_k\in(\underline{t},\bar{t}]$, we have
    \begin{align}
        \Psi(x^{k+1}) - \Psi(x^k) &\leq -\frac{c_1 t_k}{2\lambda}\langle\nabla^2\phi(x^k)d^k,d^k\rangle \\
        &\leq -\frac{c_1\underline{t}}{2\lambda}\langle\nabla^2\phi(x^k)d^k,d^k\rangle.
    \end{align}
    Using the above inequality with the $\sigma$-strong convexity of $\phi$, we obtain
    \begin{align}
        \frac{c_1\sigma\underline{t}}{2\lambda}\|d^k\|^2\leq \frac{c_1\underline{t}}{2\lambda}\langle\nabla^2\phi(x^k)d^k,d^k\rangle\leq \Psi(x^k) - \Psi(x^{k+1}).\label{C1}
    \end{align}
    Summing this inequality from $k = 1$ to $\infty$ and Assumption~\ref{asmp:function-condition}(iv), we have
    \begin{align}
        \frac{c_1\sigma\underline{t}}{2\lambda}\sum_{k=1}^\infty\|d^k\|^2 \leq \Psi(x^0) - \Psi^* < \infty,
    \end{align}
    which implies $\lim_{k \to \infty}\|d^k\| = 0$.
\end{proof}
Now, by using \cref{lem:d-to-zero} and an argument similar to that of \cref{lem:diff-convergence}, we have $\|x^{k+1} - x^k\| \to 0$.

\subsection{Global Convergence}
Now, we show the global convergence of ABPG-VMAW.
Before discussing global convergence, we have the following lemma.
\begin{lemma}\label[lemma]{lem:gradient-obj-value}
    Suppose that \cref{asmp:function-condition,asmp:feasibility,asmp:strongly-convex,asmp:global-subsequential,asmp:local-Lipschitz}. Let $\{x^k\}_{k\in\NN}$ be a sequence generated by ABPG-VMAW. Then, the following statements hold:
    \begin{enumerate}
        \item There exist $\rho>0$ and $w^{k} \in \nabla f(y^k) + \partial (g+\delta_{\cl{C}})(y^k)$ such that
        \begin{align}
             \|w^{k}\| \leq \rho \|x^{k+1}-x^k\|.
        \end{align}
        \item $\Psi \equiv \zeta$ on $\Omega$, where $\Omega$ is the set of accumulation points of $\{x^k\}_{k\in\NN}$. Moreover, $\lim_{k\to\infty}\Psi(y^k) = \Psi(\bar{x})$ for any $\bar{x}\in\Omega$. \label{lemma:psi-yk-to-psi-x}
    \end{enumerate}
\end{lemma}
\begin{proof}
    (i) Because we can define $\underline{t}=\min\left\{1, \frac{(1-c_2)\sigma}{M_1\lambda}\right\}$ if necessary, without loss of generality, we assume $\underline{t} \in (0,1]$. Let $w^{k} \coloneqq \nabla f(y^k)-\nabla f(x^k) - \frac{1}{\lambda}\nabla^2\phi(x^k)(y^k - x^k)$. Using~\eqref{stationarity-of-subpro}, we have $w^k \in \nabla f(y^k) + \partial (g+\delta_{\cl{C}})(y^k)$.
    There exists $M_1$ and $M_2>0$ such that, for $w^{k}$ and any $k \in \NN$, it holds that
    \begin{align}
        \|w^k\| &\leq \|\nabla f(y^k) -\nabla f(x^k)\| + \frac{1}{\lambda}\|\nabla^2\phi(x^k)(y^k - x^k)\|\\
        &\leq M_1\|y^k-x^k\| + \frac{M_2}{\lambda}\|y^k-x^k\|\\
        &\leq \frac{M_1 + M_2/\lambda}{\underline{t}}\|x^{k+1}-x^k\|,
    \end{align}
    where the second inequality holds because of the Lipschitz continuity of $\nabla f$ and $\nabla \phi$ on compact subsets from \cref{asmp:local-Lipschitz} and \cref{asmp:function-condition}(i), and the last inequality holds from line~\ref{line:ABPG-VMAW-min-obj} in Algorithm~\ref{alg:ABPG-VMAW}.

    (ii) Take any $\bar{x} \in \Omega$, \ie, $\{x^{k_j}\}_{j \in \mathbb{N}}$ such that $\lim_{j\to\infty}x^{k_j} = \bar{x}$. From \cref{lem:d-to-zero}, we can take $\{y^{k_j}\}_{j \in \mathbb{N}}$ such that $\lim_{j\to\infty}y^{k_j} = \bar{x}$ due to $d^{k_j} = y^{k_j} - x^{k_{j-1}}$. It follows from the definition of $y^k$ that
    \begin{align}
        \langle\nabla f(x^{k-1}), y^k - x^{k-1}\rangle & + g(y^k) + \frac{1}{\lambda}\tilde{D}_\phi(y^k, x^{k-1})\\
        &\leq \langle\nabla f(x^{k-1}), \bar{x} - x^{k-1}\rangle + g(\bar{x}) + \frac{1}{\lambda}\tilde{D}_\phi(\bar{x}, x^{k-1}),
    \end{align}
    which is equivalent to
    \begin{align}
        g(y^k) \leq \langle\nabla f(x^{k-1}), \bar{x} - y^k\rangle + g(\bar{x}) + \frac{1}{\lambda}\tilde{D}_\phi(\bar{x}, x^{k-1}) - \frac{1}{\lambda}\tilde{D}_\phi(y^k, x^{k-1}).
    \end{align}
    Substituting $k$ for $k_j$ and letting $k\to\infty$, we obtain
    \begin{align}
        \limsup_{j\to\infty} g(x^{k_j}) \leq g(\bar{x}).
    \end{align}
    Using the continuity of $f$, we have $\limsup_{j\to\infty} \Psi(x^{k_j}) \leq \Psi(\bar{x})$. In addition, $\Psi$ is lower semicontinuous from~\cref{asmp:function-condition}, $\Psi(\bar{x}) \leq \liminf_{j\to\infty} \Psi(x^{k_j})$. Therefore, since $\bar{x}\in\Omega$ is arbitrary, $\lim_{j\to\infty}\Psi(x^{k_j}) = \Psi(\bar{x}) \equiv \zeta$. From line \ref{line:ABPG-VMAW-min-obj} on Algorithm~\ref{alg:ABPG-VMAW}, $\Psi(x^{k_j}) \leq \Psi(y^{k_j}) \leq \Psi(x^{k_{j-1}})$ implies $\lim_{j\to\infty}\Psi(y^{k_j}) = \Psi(\bar{x}) \equiv \zeta$.
\end{proof}

We establish that a sequence generated by ABPG-VMAW converges to a stationary point of~\eqref{problem}.

\begin{theorem}[Global convergence]\label{thm:point-convergence}
    Suppose that \cref{asmp:function-condition,asmp:feasibility,asmp:strongly-convex,asmp:global-subsequential,asmp:local-Lipschitz} hold. Furthermore, suppose that $\Psi$ is a KL function. Let $\{x^k\}_{k\in\NN}$ be a sequence generated by ABPG-VMAW.
    Then, the following statements hold:
    \begin{enumerate}
        \item If $x^{k_0+k},y^{k_0+k-1} \in B(\bar{x},\rho )$ for some $k_0\in\NN$, it holds that
        \begin{align}
            2\|x^{k_0+k+1}-x^{k_0+k}\| \leq \|x^{k_0+k}-x^{k_0+k-1}\| + \chi_{k_0+k},\label{ineq:induc}
        \end{align}
        where $\chi_k = \frac{\rho_2}{\rho_1}[\psi (\Psi(x^k)-\Psi(\bar{x}))- \psi (\Psi(x^{k+1})-\Psi(\bar{x}))]$.
        \item There exists $\bar{k}_0\in\NN$ such that, for any $k\geq 1$, the following conditions hold:
        \begin{align}
            &x^{\bar{k}_0+k},y^{\bar{k}_0+k-1} \in B(\bar{x},\rho),\label{induc_obj1}\\
            &\sum_{i=\bar{k}_0}^{\bar{k}_0+k}\|x^{i+1}-x^{i}\| + \|x^{\bar{k}_0+k+1}-x^{\bar{k}_0+k}\| \leq \|x^{\bar{k}_0+1}-x^{\bar{k}_0}\| + \chi_{\bar{k}_0+k}.\label{induc_obj2}
        \end{align}
        \item The sequence $\{x^k\}_{k\in\NN}$ converges to a stationary point of~\eqref{problem}; moreover, $\sum_{k = 0}^{\infty}\|x^{k+1}-x^k\| < \infty$.
    \end{enumerate}
\end{theorem}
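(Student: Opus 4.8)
The plan is to run the classical Kurdyka--\L ojasiewicz descent scheme of Attouch--Bolte--Svaiter and Bolte--Sabach--Teboulle, with one nonstandard adjustment forced by the structure of \Cref{alg:ABPG-VMAW}: the relative-error estimate of \cref{lem:gradient-obj-value}(i) is anchored at $y^k$ rather than at $x^k$. Let $\Omega$ denote the set of accumulation points of $\{x^k\}_{k\in\NN}$, which is nonempty and compact by \cref{thm:global-subseqential-convergence}(i). Since $\{\Psi(x^k)\}_{k\in\NN}$ is nonincreasing by \cref{lem:value-reduction} and bounded below by \cref{asmp:function-condition}(iv), it converges to some $\zeta\in\RR$, and by \cref{lem:gradient-obj-value}(ii) we have $\Psi\equiv\zeta$ on $\Omega$ and $\Psi(y^k)\to\zeta$. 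Combining \cref{lem:value-reduction}, \cref{lem:inf-of-t}, \cref{asmp:strongly-convex} and $t_k\in(\underline{t},\bar{t}\,]$ produces a sufficient-decrease bound $\rho_1\|x^{k+1}-x^k\|^2\le\Psi(x^k)-\Psi(x^{k+1})$ with a constant $\rho_1>0$. If $\Psi(x^{k_0})=\zeta$ for some $k_0$, monotonicity forces $\Psi(x^k)=\zeta$ and hence $x^{k+1}=x^k$ for all $k\ge k_0$, so the sequence is eventually constant, converges to a point that is stationary by \cref{thm:global-subseqential-convergence}(ii), and $\sum_k\|x^{k+1}-x^k\|<\infty$ trivially. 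Hence I may assume $\Psi(x^k)>\zeta$ for all $k$.

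For statement (i), fix $\bar x\in\Omega$ and let $v>0$, the neighborhood $B(\bar x,\rho)$, and $\psi\in\Xi_v$ be the KL data for $\Psi$ at $\bar x$ (equivalently, apply the uniformized property of \cref{lemma:uniformized-kl} with $\Gamma=\Omega$); enlarging $k_0$ if necessary I may also assume $\Psi(x^{k_0+k})-\zeta,\ \Psi(y^{k_0+k-1})-\zeta\in(0,v)$, which is possible since both tend to $0$. Writing $m=k_0+k$, the chain is: (a) by \cref{lem:gradient-obj-value}(i) there is $w^{m-1}\in\nabla f(y^{m-1})+\partial(g+\delta_{\cl C})(y^{m-1})$, which equals $\partial\Psi(y^{m-1})$ since $y^{m-1}\in C$ by \cref{asmp:feasibility}, with $\|w^{m-1}\|\le\rho_2\|x^m-x^{m-1}\|$ for a constant $\rho_2>0$; (b) the KL inequality at $y^{m-1}$ gives $\psi'(\Psi(y^{m-1})-\zeta)\,\rho_2\|x^m-x^{m-1}\|\ge1$; (c) since the update on line~\ref{line:ABPG-VMAW-min-obj} always selects the smaller objective value, $\Psi(x^m)\le\Psi(y^{m-1})$, so concavity of $\psi$ makes $\psi'$ nonincreasing and $\psi'(\Psi(x^m)-\zeta)\ge\psi'(\Psi(y^{m-1})-\zeta)$; (d) concavity of $\psi$ applied at $\Psi(x^{m+1})-\zeta\le\Psi(x^m)-\zeta$ together with the sufficient-decrease bound gives
\begin{align*}
\psi(\Psi(x^m)-\zeta)-\psi(\Psi(x^{m+1})-\zeta)\ \ge\ \psi'(\Psi(x^m)-\zeta)\bigl(\Psi(x^m)-\Psi(x^{m+1})\bigr)\ \ge\ \frac{\rho_1\|x^{m+1}-x^m\|^2}{\rho_2\|x^m-x^{m-1}\|}.
\end{align*}
Rearranging to $\|x^{m+1}-x^m\|^2\le\frac{\rho_2}{\rho_1}\bigl[\psi(\Psi(x^m)-\zeta)-\psi(\Psi(x^{m+1})-\zeta)\bigr]\|x^m-x^{m-1}\|$ and applying $2\sqrt{ab}\le a+b$ yields exactly~\eqref{ineq:induc} with the stated $\chi_m$.

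For statement (ii), I run the standard bootstrap. Since $\bar x\in\Omega$, pick a subsequence $x^{k_j}\to\bar x$; because $\|x^{k+1}-x^k\|\to0$ and $d^k=y^k-x^k\to0$ (\cref{lem:d-to-zero} and the comment following it), also $x^{k_j-1}\to\bar x$ and $y^{k_j-1}\to\bar x$, while $\psi(\Psi(x^{k_j})-\zeta)\to0$ by continuity of $\psi$ with $\psi(0)=0$. Choose $\bar k_0=k_j$ with $j$ large enough that $\|x^{\bar k_0}-\bar x\|$, $\|y^{\bar k_0-1}-\bar x\|$, $\|x^{\bar k_0}-x^{\bar k_0-1}\|$, $\sup_{k\ge\bar k_0}\|d^k\|$, and $\frac{\rho_2}{\rho_1}\psi(\Psi(x^{\bar k_0})-\zeta)$ are all small relative to $\rho$; an induction on $k$ using~\eqref{ineq:induc} from statement (i) then keeps $x^{\bar k_0+k},y^{\bar k_0+k-1}\in B(\bar x,\rho)$ (the new iterate $y^{\bar k_0+k}=x^{\bar k_0+k}+d^{\bar k_0+k}$ stays in the ball because $\|d^{\bar k_0+k}\|$ is small), and summing~\eqref{ineq:induc} with the telescoping $\chi$-sum gives~\eqref{induc_obj2}. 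For statement (iii), summing~\eqref{ineq:induc} over all $k\ge\bar k_0$ and using $\sum_k\chi_{\bar k_0+k}\le\frac{\rho_2}{\rho_1}\psi(\Psi(x^{\bar k_0})-\zeta)<\infty$ gives $\sum_{k\ge0}\|x^{k+1}-x^k\|<\infty$, so $\{x^k\}$ is Cauchy and converges to some $x^*\in\Omega$, which is a stationary point of~\eqref{problem} by \cref{thm:global-subseqential-convergence}(ii).

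The step I expect to be the main obstacle is (c): transferring the KL inequality from the point $y^{m-1}$, where the relative-error estimate of \cref{lem:gradient-obj-value}(i) is available, to the point $x^m$, at whose objective value $\psi'$ must be evaluated for the telescoping to close. This is precisely where the selection rule on line~\ref{line:ABPG-VMAW-min-obj} is indispensable, via $\Psi(x^m)\le\Psi(y^{m-1})$ and the monotonicity of $\psi'$; without that rule the telescoping can break down. A secondary, purely bookkeeping, difficulty is ordering the quantifiers correctly in the induction of statement (ii) — one must fix $\rho$ (and hence $v,\psi$) from the KL property first, and only afterwards choose $\bar k_0$ large enough that all the listed quantities are small enough to keep both the $x$- and $y$-iterates inside $B(\bar x,\rho)$ for every later index.
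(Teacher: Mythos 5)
Your proposal is correct and follows essentially the same route as the paper: the KL inequality is invoked at $y^{k_0+k-1}$ where the subgradient bound of \cref{lem:gradient-obj-value}(i) lives, transferred to $\Psi(x^{k_0+k})$ via the selection rule on line~\ref{line:ABPG-VMAW-min-obj} and the monotonicity of $\psi'$, combined with the sufficient decrease bound and the arithmetic--geometric mean inequality to get \eqref{ineq:induc}, and then closed by the standard ball-confinement induction and telescoping summation. The only (cosmetic) deviation is in part (ii), where you keep $y^{\bar{k}_0+k}$ inside $B(\bar{x},\rho)$ by taking $\sup_{k\ge\bar{k}_0}\|d^k\|$ small (valid by \cref{lem:d-to-zero}), whereas the paper bounds $\|y^{\bar{k}_0+k}-x^{\bar{k}_0+k}\|\le\|x^{\bar{k}_0+k+1}-x^{\bar{k}_0+k}\|/\underline{t}$ and controls it through the objective gap---an equivalent estimate resting on the same lower bound for $t_k$.
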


\begin{proof}
    (i) Since $\{x^k\}_{k\in\NN}$ is bounded and $\Omega$ is the set of accumulation points of $\{x^k\}_{k\in\NN}$ from \cref{lem:gradient-obj-value}(ii), we have $\lim_{k\to\infty}\dist(x^k, \Omega) = 0$, \ie,
    \begin{align}
        \lim_{k\to\infty}\Psi(x^k) = \Psi(\bar{x}). \label{eq:obj-limit}
    \end{align}
    From \cref{thm:global-subseqential-convergence}, $\Omega$ is a subset of stationary points.
    Thus, if there exists an integer $\bar{k} \geq 0$ such that $\Psi(x^k) = \Psi(\bar{x})$ holds for any $k \geq \bar{k}$, \cref{lem:value-reduction} implies $x^{\bar{k}+1}=x^{\bar{k}}$. A trivial induction shows that $\{x^k\}_{k\in\NN}$ converges to a stationary point.
    Since $\{\Psi(x^k)\}_{k\in\NN}$ is a non-increasing sequence,~\eqref{eq:obj-limit} provides $\Psi(\bar{x}) < \Psi(x^{k})$ for all $k \geq 0$. Again from~\eqref{eq:obj-limit}, for any $v \in (0, +\infty]$, there exists an integer $k_1 \geq 0$ such that, for all $k \geq k_1$, $\Psi(\bar{x}) < \Psi(x^k) < \Psi(\bar{x}) + v$.
    From \cref{lem:gradient-obj-value}(ii), there exists an integer $k_2 \geq 0$ such that, for all $k \geq k_2$, $\Psi(\bar{x}) < \Psi(y^{k-1}) < \Psi(\bar{x})+v$. 
    Using this, the non-increase of $\{\Psi(x^k)\}_{k\in\NN}$, and line~\ref{line:ABPG-VMAW-min-obj} in Algorithm~\ref{alg:ABPG-VMAW}, we have the following inequality for $k_0 \geq \max\{k_1, k_2\}$:
    \begin{align}
    \Psi(\bar{x})\leq \Psi(x^{k_0+k+1}) < \Psi(x^{k_0+k}) < \Psi(y^{k_0+k-1})<\Psi(\bar{x})+v,
    \end{align}
    which implies $x^{k_0+k},y^{k_0+k-1} \in B(\bar{x},\rho ) \cap \Set{z\in\RR^n}{\Psi(\bar{x}) < \Psi(z) < \Psi(\bar{x})+v}$. 
    Here, by using \Cref{lemma:uniformized-kl} at $y^{k_0+k-1}$ and \cref{lem:gradient-obj-value}(i), we obtain
    \begin{align}
        \frac{1}{\rho_2\|x^{k_0+k}-x^{k_0+k-1}\|} \leq \frac{1}{\|w^{k_0+k-1}\|}\leq \psi'(\Psi(y^{k_0+k-1})-\Psi(\bar{x})) \leq \psi'(\Psi(x^{k_0+k})-\Psi(\bar{x})), \label{ineq:dev-psi}
    \end{align}
    where the last inequality holds from non-increase of $\psi'$ due to concavity and $\Psi(y^{k_0+k-1})-\Psi(\bar{x})\geq \Psi(x^{k_0+k})-\Psi(\bar{x})$.
    Because $\psi$ is concave, it also holds that
    \begin{align}
        &\psi (\Psi(x^{k_0+k})-\Psi(\bar{x}))- \psi (\Psi(x^{k_0+k+1})-\Psi(\bar{x}))\\
        &\quad\geq \psi '(\Psi(x^{k_0+k})-\Psi(\bar{x}))(\Psi(x^{k_0+k})- \Psi(x^{k_0+k+1}))\\
        &\quad\geq \frac{\rho_1\|x^{k_0+k+1}-x^{k_0+k}\|^2}{\rho_2\|x^{k_0+k}-x^{k_0+k-1}\|},
    \end{align}
    where the last inequality holds because of~\cref{lem:value-reduction}, $\sigma$-strongly convexity of $\phi$, and~\eqref{ineq:dev-psi}.
    By rearranging terms and letting $\chi_k = \frac{\rho_2}{\rho_1}[\psi (\Psi(x^k)-\Psi(\bar{x}))- \psi (\Psi(x^{k+1})-\Psi(\bar{x}))]$, we obtain
    \begin{align}
        \|x^{k_0+k+1}-x^{k_0+k}\|^2 \leq \chi_{k_0+k} \|x^{k_0+k}-x^{k_0+k-1}\|.
    \end{align}
    Applying the arithmetic–geometric mean inequality yields
    \begin{align}
        2\|x^{k_0+k+1}-x^{k_0+k}\| \leq 2\sqrt{\chi_{k_0+k} \|x^{k_0+k}-x^{k_0+k-1}\|}\leq \chi_{k_0+k} + \|x^{k_0+k}-x^{k_0+k-1}\|.
    \end{align}
    
    (ii) Without loss of generality, we assume that $\underline{t}\in(0,1]$ is the lower bound of $\{t_k\}_{k\in\NN}$ (see also the proof of~\cref{lem:gradient-obj-value}(i)). Let $\psi\in\Xi_v$. To establish (ii), we prove that there exists a sufficiently large integer $k_0$ such that
    \begin{align}
        \| \bar{x} - x^{k_0} \| + 3 \sqrt{\frac{\Psi(x^{k_0})-\Psi(\bar{x})}{\rho_1 \underline{t}^2}} + \frac{\rho_2}{\rho_1}\psi (\Psi(x^{k_0})-\Psi(\bar{x})) < \rho,\label{ineq:rho-last}
    \end{align}
    and then prove that $\|x^{k_0 + k} - \bar{x}\|$ and $\|y^{k_0 + k} - \bar{x}\|$ are bounded by the left-hand side of~\eqref{ineq:rho-last}. Note that $k_0$ needs to be larger than $k_1$ and $k_2$ mentioned above.
    
    From \eqref{eq:obj-limit}, there exists a nonnegative integer $k_3$ such that it holds for any $k\geq k_3$ that
    \begin{align}
        3 \sqrt{\frac{\Psi(x^{k})-\Psi(\bar{x})}{\rho_1 \underline{t}^2}} < \frac{\rho}{3} \quad \text{and} \quad
        \frac{\rho_2}{\rho_1}\psi (\Psi(x^{k})-\Psi(\bar{x})) &< \frac{\rho}{3}.\label{ineq:rho-01}
    \end{align}
    Note that since $0 < \underline{t} \leq 1$ for any $k\geq k_3$, it holds that
    \begin{align}
        3 \sqrt{\frac{\Psi(x^{k})-\Psi(\bar{x})}{\rho_1}} < \frac{\rho}{3}.\label{ineq:rho-011}
    \end{align}
    Since $\bar{x}$ is an accumulation point of the sequence $\{x^k\}_{k\in\NN}$, there exists a nonnegative integer $k_4 \geq 0$ such that $\|\bar{x} - x^k\| < \rho/3$ holds for any $k \geq k_4$.
    Using \eqref{ineq:rho-01} and defining $\bar{k}_0 \geq \max\{k_1, k_2, k_3, k_4\}$, we have \eqref{ineq:rho-last}.
    
    Using~\eqref{ineq:rho-last}, we prove that~\eqref{induc_obj1} and~\eqref{induc_obj2} hold for any $k\geq 1$ by induction. For $k=1$, from \eqref{C1} and $\Psi(x^{\bar{k}_0}) - \Psi(x^{\bar{k}_0 +1})<\Psi(x^{\bar{k}_0}) - \Psi(\bar{x})$, it holds that
    \begin{align}
        \|x^{\bar{k}_0 +1}-x^{\bar{k}_0}\| \leq \sqrt{\frac{\Psi(x^{\bar{k}_0}) - \Psi(x^{\bar{k}_0 +1})}{\rho_1}} \leq \sqrt{\frac{\Psi(x^{\bar{k}_0}) - \Psi(\bar{x})}{\rho_1}}.\label{ineq:xk0-bounded-obj-gap}
    \end{align}
    Combining $\|\bar{x} - x^{\bar{k}_0} \| < \rho/3$ for $\bar{k}_0 \geq \max\{k_1, k_2, k_3, k_4\}$, \eqref{ineq:rho-011}, and \eqref{ineq:xk0-bounded-obj-gap}, we have
    \begin{align}
        \|\bar{x} - x^{\bar{k}_0 +1}\| \leq \|\bar{x} - x^{k_0}\| + \|x^{\bar{k}_0} - x^{\bar{k}_0 +1}\| < \rho,
    \end{align}
    which implies $x^{\bar{k}_0 +1}\in B(\bar{x},\rho)$. Moreover, using a similar discussion and~\eqref{ineq:rho-01}, we have
    \begin{align}
        \|\bar{x} - y^{\bar{k}_0}\| \leq \|\bar{x} - x^{\bar{k}_0}\| + \|x^{\bar{k}_0} - y^{\bar{k}_0}\| < \rho,
    \end{align}
    \ie, $y^{\bar{k}_0}\in B(\bar{x},\rho)$. Due to $x^{\bar{k}_0 +1}, y^{\bar{k}_0}\in B(\bar{x},\rho)$ and \eqref{ineq:induc},~\eqref{induc_obj1} and~\eqref{induc_obj2} hold  for $k=1$.

    Next, we suppose that~\eqref{induc_obj1} and~\eqref{induc_obj2} hold for $k\geq 1$.
    Since $\psi$ is positive and monotonically increasing, and $\{\Psi(x^k)\}_{k\in\NN}$ is non-increasing, we have
    \begin{align}
        \chi_{\bar{k}_0 +k} \leq \frac{\rho_2}{\rho_1}\psi(\Psi(x^{\bar{k}_0 +k})-\Psi(\bar{x})) \leq \frac{\rho_2}{\rho_1}\psi(\Psi(x^{\bar{k}_0})-\Psi(\bar{x})). \label{ineq:chi-bound}
    \end{align}
    It holds that
    \begin{align}
        \|x^{\bar{k}_0 +k+1} - \bar{x}\| 
        &\leq \|x^{\bar{k}_0} - \bar{x}\|  + \sum_{i=\bar{k}_0}^{\bar{k}_0 +k}\|x^{i+1} - x^{i}\| + \|x^{\bar{k}_0 +k+1} - x^{\bar{k}_0 +k}\|\\
        &\leq \|x^{\bar{k}_0} - \bar{x}\| + \|x^{\bar{k}_0} - x^{\bar{k}_0 +1}\| +  \chi_{\bar{k}_0 +k}\\
        &\leq \|x^{\bar{k}_0} - \bar{x}\| + \sqrt{\frac{\Psi(x^{\bar{k}_0}) - \Psi(\bar{x})}{\rho_1}} + \frac{\rho_2}{\rho_1}\psi(\Psi(x^{\bar{k}_0})-\Psi(\bar{x})) < \rho,
    \end{align}
    where the first inequality holds from the triangle inequality and $\|x^{\bar{k}_0 +k+1} - x^{\bar{k}_0 +k}\| \geq 0$, the second inequality holds from the assumption \eqref{induc_obj2}, the third inequality holds from~\eqref{C1} and~\eqref{ineq:chi-bound}, and the last inequality holds from \eqref{ineq:rho-last}.
    Moreover, we have
    \begin{align}
        &\quad \|y^{\bar{k}_0 + k} - \bar{x}\| \\
        &\leq \|x^{\bar{k}_0} - \bar{x}\| + \|x^{\bar{k}_0} - x^{\bar{k}_0 +1}\| + \sum_{i=\bar{k}_0 }^{\bar{k}_0 + k}\|x^{i+1} - x^{i}\| + \|x^{\bar{k}_0 +k+1} - x^{\bar{k}_0 + k}\|\\
        &\quad + \|y^{\bar{k}_0 + k} - x^{\bar{k}_0 + k}\|\\
        &\leq \|x^{\bar{k}_0} - \bar{x}\| + \|x^{\bar{k}_0} - x^{\bar{k}_0 +1}\| + \chi_{\bar{k}_0 +k} + \|x^{\bar{k}_0+k+1} - x^{\bar{k}_0+k}\|/\underline{t}\\
        &\leq \|x^{\bar{k}_0} - \bar{x}\| + \sqrt{\frac{\Psi(x^{\bar{k}_0}) - \Psi(\bar{x})}{\rho_1}} +\sqrt{\frac{\Psi(x^{\bar{k}_0 + k}) - \Psi(x^{\bar{k}_0 +k+1})}{\rho_1\underline{t}^2}} + \frac{\rho_2}{\rho_1}\psi (\Psi(x^{\bar{k}_0})-\Psi(\bar{x}))\\
        &\leq \|x^{\bar{k}_0} - \bar{x}\| + 2\sqrt{\frac{\Psi(x^{\bar{k}_0}) - \Psi(\bar{x})}{\rho_1\underline{t}^2}} + \frac{\rho_2}{\rho_1}\psi (\Psi(x^{\bar{k}_0})-\Psi(\bar{x}))< \rho,
    \end{align}
    where the first inequality holds from the triangle inequality and $\|x^{\bar{k}_0 +k+1} - x^{\bar{k}_0 +k}\| \geq 0$, the second inequality holds from the assumption \eqref{induc_obj2} and line~\ref{line:ABPG-VMAW-min-obj} in Algorithm~\ref{alg:ABPG-VMAW}, the third inequality holds from \eqref{C1} and~\eqref{ineq:chi-bound}, and the last inequality holds from~\eqref{ineq:rho-last}.
    These imply $x^{\bar{k}_0 + k + 1}\in B(\bar{x},\rho)$ and $y^{\bar{k}_0 + k}\in B(\bar{x},\rho)$, \ie,~\eqref{induc_obj1} holds. Using~\eqref{ineq:induc} and \eqref{induc_obj2} for $k$, we have \eqref{induc_obj2} for $k+1$.
    Therefore,~\eqref{induc_obj1} and~\eqref{induc_obj2} hold for all $k \geq 1$.
    
    (iii) Finally, we establish global convergence. In this case, since
    \begin{align}
        \sum_{i=\bar{k}_0}^{\bar{k}_0+k}\|x^{i+1}-x^{i}\| \leq \|x^{\bar{k}_0 +1}-x^{\bar{k}_0}\| + \frac{\rho_2}{\rho_1}\psi (\Psi(x^{\bar{k}_0+1})-\Psi(\bar{x}))
    \end{align}
    holds for any $k\in \mathbb{N}$, we have $\sum_{i=\bar{k}_0}^{\infty}\|x^{i+1}-x^{i}\| < +\infty$, which implies that $\{x^{\bar{k}_0+k}\}_{k\in \mathbb{N}}$ converges to some $x^*$. Since $\bar{x}$ is an accumulation point of $\{x^k\}_{k\in\NN}$, we have $x^* = \bar{x}$ from~\cref{thm:global-subseqential-convergence}.
\end{proof}

Finally, we establish convergence rates, which are derived from $\sum_{k = 0}^{\infty}\|x^{k+1}-x^k\| < + \infty$ in the same way as, \eg, \cite[Theorem 3]{Bonettini_2017}, \cite[Theorem 4]{Takahashi2022-ml}, and \cite[Theorem 2]{Attouch2009-wf}.
\begin{theorem}[Convergence rates]\label{thm:convergence-rate}
    Suppose that \cref{asmp:function-condition,asmp:feasibility,asmp:strongly-convex,asmp:global-subsequential,asmp:local-Lipschitz} hold. Let $\{x^k\}_{k\in\NN}$ be a sequence generated by ABPG-VMAW and let $\bar{x}$ be a stationary point of~\eqref{problem}. Suppose further that $\Psi$ is a KL function with $\psi$ in the KL inequality~\eqref{ineq:KL-property} taking the form $\psi(s) = cs^{1-\theta}$ for some $\theta\in[0,1)$ and $c > 0$. Then, the following statements hold:
    \begin{enumerate}
        \item If $\theta = 0$, then the sequence $\{x^k\}_{k\in\NN}$ converges to $\bar{x}$ in a finite number of iterations;
        \item If $\theta \in (0, 1/2]$, then there exist $c_1 > 0$ and $\eta \in[0,1)$ such that $\|x^k - \bar{x}\| < c_1\eta^k$;
        \item If $\theta \in (1/2, 1)$, then there exists $c_2 > 0$ such that $\|x^k - \bar{x}\| < c_2k^{-\frac{1-\theta}{2\theta - 1}}$.
    \end{enumerate}
\end{theorem}

\section{Numerical Experiments}\label{sec:numerical-experiments}
In this section, we conducted numerical experiments to examine the performance of our algorithm. All numerical experiments were performed in Python 3.9 on a MacBook Pro with an Apple M1 Max and 64GB LPDDR5 memory.

\subsection{\texorpdfstring{$\ell_p$}{lp}-Regularized Least Squares Problem}\label{subsec:lp-regularized-least-squares}
We consider the sparse $\ell_p$-regularized least squares problem, where $p$ is slightly larger than $1$,~\cite{doi:10.1137/18M1194456,7472557}:
\begin{align}
    \min_{x\in\RR^n}\quad \frac{1}{2}\|Ax-b\|^2 + \frac{\theta_p}{p}\|x\|^p_p,\label{lp-regular}
\end{align}
where $A\in\RR^{m\times n}$, $b\in\RR^m$, and $\theta_p>0$.
Let $g\equiv 0$. We also use $f$ and $\phi$ given by
\begin{align}
    f(x) = \frac{1}{2}\|Ax-b\|^2 + \frac{\theta_p}{p}\|x\|^p_p,\quad \text{and} \quad
    \phi(x) = \frac{1}{2}\|x\|^2 + \frac{1}{p}\|x\|^p_p.
\end{align}
Note that $f$ and $\phi$ are $\mathcal{C}^1$ if $p > 1$ while $\nabla f$ and $\nabla\phi$ are not globally Lipschitz continuous.
Although we can choose any $\lambda > 0$, we use $\lambda$ given by $\lambda < 1/L$ if $(f,\phi)$ is $L$-smad (see, for more details, \cref{remark:l-smad}). Note that our algorithm does not require the $L$-smad property.
\begin{proposition}[The $L$-smad property of $(f,\phi)$~{\cite[Proposition 24]{Takahashi}}]
    Let $f$ and $\phi$ be as defined above. Then, for any $L > 0$ satisfying
    \begin{align}
        L \geq \lambda_{\max}(A^\top A) + \theta_p, \label{L-smad-L}
    \end{align}
    the functions $L\phi - f$ and $L\phi + f$ are convex on $\RR^n$, \ie, the pair $(f,\phi)$ is $L$-smad on $\RR^n$.
\end{proposition}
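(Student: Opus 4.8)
The plan is to expand $L\phi \pm f$, regroup the terms, and verify convexity piece by piece, so that the whole argument rests on two elementary facts: (a) for $p \geq 1$ the map $x \mapsto \|x\|_p^p = \sum_{i=1}^n |x_i|^p$ is convex on $\RR^n$, and (b) a quadratic function $x \mapsto \tfrac12\langle Mx, x\rangle + \langle c, x\rangle + d$ with symmetric $M$ is convex if and only if $M \succeq 0$.

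First I would rewrite the two candidate functions as
\begin{align*}
    L\phi(x) - f(x) &= \left(\frac{L}{2}\|x\|^2 - \frac12\|Ax - b\|^2\right) + \frac{L - \theta_p}{p}\|x\|_p^p, \\
    L\phi(x) + f(x) &= \frac{L}{2}\|x\|^2 + \frac12\|Ax - b\|^2 + \frac{L + \theta_p}{p}\|x\|_p^p.
\end{align*}
For $L\phi + f$ the claim is then immediate: $\tfrac{L}{2}\|x\|^2$ and $\tfrac12\|Ax-b\|^2$ are convex quadratics, and $L + \theta_p > 0$ makes the last term a nonnegative multiple of the convex function from (a); a sum of convex functions is convex.

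For $L\phi - f$ the content is in the bracketed quadratic. Writing $\tfrac12\|Ax-b\|^2 = \tfrac12\langle A^\top A x, x\rangle - \langle A^\top b, x\rangle + \tfrac12\|b\|^2$, the bracket becomes $\tfrac12\langle (LI - A^\top A)x, x\rangle + \langle A^\top b, x\rangle - \tfrac12\|b\|^2$, whose associated matrix is $LI - A^\top A$. By the hypothesis~\eqref{L-smad-L} we have $L \geq \lambda_{\max}(A^\top A) + \theta_p \geq \lambda_{\max}(A^\top A)$, hence $LI - A^\top A \succeq (L - \lambda_{\max}(A^\top A))I \succeq 0$, and (b) gives convexity of the bracket. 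The same hypothesis gives $L - \theta_p \geq \lambda_{\max}(A^\top A) \geq 0$, so $\tfrac{L - \theta_p}{p}\|x\|_p^p$ is a nonnegative multiple of the convex function from (a). Adding the two convex pieces shows $L\phi - f$ is convex on $\RR^n$, which finishes the proof.

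I do not anticipate a genuine obstacle; the only delicate point is fact (a) when $1 < p < 2$, where $|t|^p$ is not twice differentiable at the origin, so its convexity should be obtained directly — for instance by writing $\|x\|_p^p$ as the composition of the nondecreasing convex map $s \mapsto s^p$ on $\RR_+$ with the convex map $x \mapsto \|x\|_p$ — rather than through a Hessian computation.
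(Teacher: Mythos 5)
Your proof is correct. Note that the paper itself does not prove this proposition; it imports it verbatim from Takahashi and Takeda (their Proposition~24), so there is no in-paper argument to compare against line by line. The natural route suggested by the paper's own Remark on the $L$-smad property would be the second-order characterization $-L\nabla^2\phi(x) \preceq \nabla^2 f(x) \preceq L\nabla^2\phi(x)$, using $\nabla^2 f(x) = A^\top A + \theta_p(p-1)\diag(|x|^{p-2})$ and $\nabla^2\phi(x) = I + (p-1)\diag(|x|^{p-2})$, which gives $L\nabla^2\phi(x) \mp \nabla^2 f(x) = (LI \mp A^\top A) + (L\mp\theta_p)(p-1)\diag(|x|^{p-2}) \succeq 0$ under~\eqref{L-smad-L}; but for $1<p<2$ this Hessian computation is only valid on $\{x : x_i \neq 0 \ \forall i\}$ and needs an extra limiting or density argument to cover all of $\RR^n$. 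Your decomposition argument --- splitting $L\phi \pm f$ into a quadratic with matrix $LI \mp A^\top A$ plus the nonnegative multiple $\frac{L\mp\theta_p}{p}\|x\|_p^p$ of a convex function, and establishing convexity of $\|x\|_p^p$ by composition rather than by a Hessian --- sidesteps that differentiability issue entirely and proves convexity directly on all of $\RR^n$, which is a genuinely cleaner and slightly more careful argument; you correctly identified and handled the only delicate point. The eigenvalue bound $LI - A^\top A \succeq (L-\lambda_{\max}(A^\top A))I \succeq 0$ and the sign checks $L+\theta_p>0$, $L-\theta_p \geq \lambda_{\max}(A^\top A) \geq 0$ are all sound.
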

The subproblem of BPG cannot be solved in closed form if $p>1$ because its optimality condition is a $(p-1)$th polynomial equation.
On the other hand, $\nabla^2 \phi(x) = I + (p-1)\diag(|x|^{p-2})$ is a diagonal matrix and $\tilde{\mathcal{T}}_{\lambda}(x)$ can be solved in closed form even if $g\not\equiv 0$~\cite[Remark 25]{Takahashi}.

\begin{figure}[!tbp]
    \begin{minipage}[t]{0.49\linewidth}
        \centering
        \includegraphics[width=\linewidth]{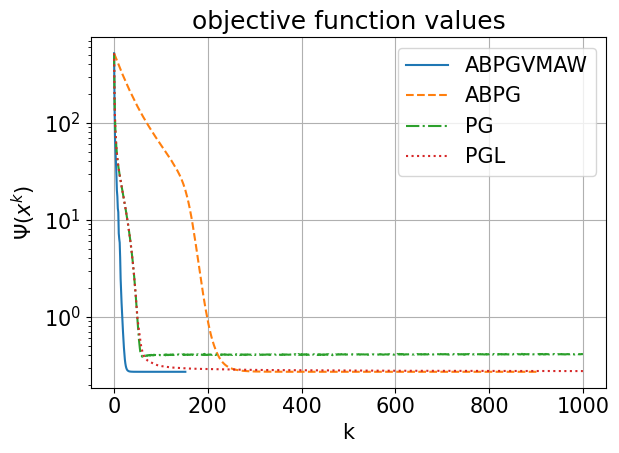}
        \subcaption{Objective function values ($p=1.2$)}
        \label{obj-iter-1.2}
    \end{minipage}
    \begin{minipage}[t]{0.49\linewidth}
        \centering
        \includegraphics[width=\linewidth]{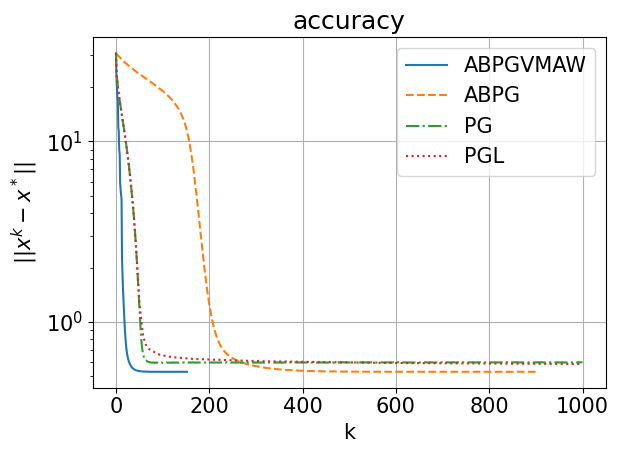}
        \subcaption{Accuracy ($p=1.2$)}
        \label{acc-iter-1.2}
    \end{minipage} 
    \caption{Comparison with ABPG-VMAW (blue), ABPG (orange), PG (green), and PGL (red) on the $\ell_p$ regularized least squares problem \eqref{lp-regular}}
    \label{lp-result1}
\end{figure}

We compare ABPG-VMAW with ABPG~\cite{Takahashi}, the proximal gradient algorithm (PG) with a constant step-size, and PG with line search (PGL).
We set $c_1 = 0.99$, $c_2=0.999$, $\mu =0.9$, and $\eta = 2$ for ABPG-VMAW and $c_1 = 0.99$ and $\delta=0.9$ for ABPG. Although $\nabla f$ is not Lipschitz continuous, PG uses the step-size $1/L$ given by~\eqref{L-smad-L}. Note that PG does not guarantee global convergence. PGL searches $\lambda_k > 0$ satisfying the descent lemma and uses the initial step-size $\lambda_0 = 1/L$ given by~\eqref{L-smad-L}~\cite[p.283]{doi:10.1137/1.9781611974997}. The initial point $x^0\in\RR^n$ is generated from an \iid normal distribution. The maximum number of iterations is 1000. The terminal condition is $\|x^k-x^{k-1}\|\leq 10 ^{-8}$.

\begin{figure}[!tbp]
      \begin{minipage}[t]{0.49\linewidth}
        \centering
        \includegraphics[width=\linewidth]{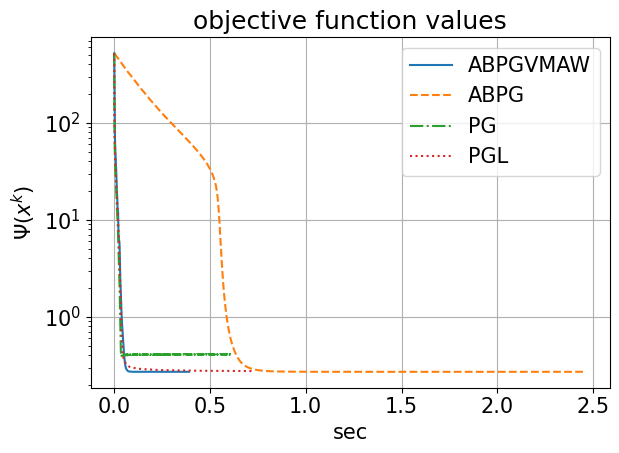}
        \subcaption{Objective function values ($p=1.2$)}
        \label{obj-time-1.2}
    \end{minipage}
    \begin{minipage}[t]{0.49\linewidth}
        \centering
        \includegraphics[width=\linewidth]{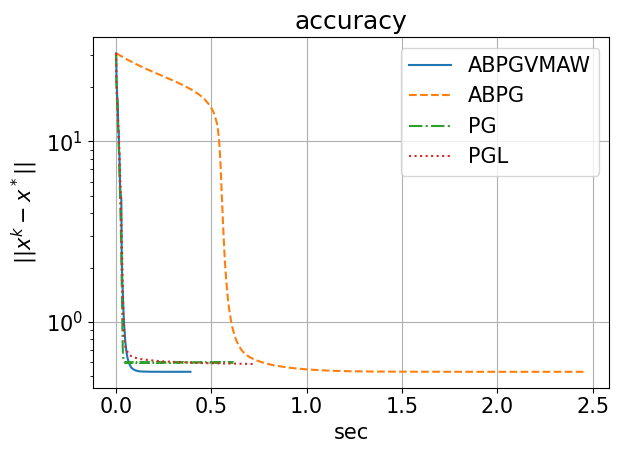}
        \subcaption{Accuracy ($p=1.2$)}
        \label{acc-time-1.2}
    \end{minipage} 
    \caption{Comparison with ABPG-VMAW (blue), ABPG (orange), PG (green), and PGL (red) on the $\ell_p$ regularized least squares problem \eqref{lp-regular}}
    \label{lp-result2}
\end{figure}
  
The problem setting is as follows. We generate the matrix $A\in\RR^{n\times m}$ and the ground truth $x^*\in\RR^n$, which has 10\% nonzero elements, from \iid normal distribution. We set $b=Ax^*$. For $(n,m)=(1000,700)$, $p=1.2$, and $\theta_p=0.1$, \cref{lp-result1} shows the objective function value $\Psi(x^k)$ and the accuracy $\|x^k-x^*\|$ at each iteration on a logarithmic scale and~\cref{lp-result2} shows those on the time axis. When $p = 1.2$, the gradient of $\|x\|^p_p$ is not Lipschitz continuous on $(-1,1)^n$. This is why PG and PGL are not guaranteed to converge to a stationary point in this setting. 
According to Figures \ref{obj-iter-1.2}, \ref{acc-iter-1.2}, \ref{obj-time-1.2}, and \ref{acc-time-1.2}, when $p = 1.2$, only ABPG-VMAW and ABPG converge within 1000 iterations, while PG and PGL do not satisfy the stopping condition. In particular, ABPG-VMAW meets the stopping condition in fewer than 200 iterations, which is significantly fewer than ABPG, which requires over 800 iterations.

Next, we show the average performance of the four methods---ABPG-VMAW, ABPG, PG, and PGL---on the $\ell_p$-regularized least squares problem. Specifically, we selected combinations of $m$ and $n$ from the set $\{100,200\}\times \{1000,2000,5000\}$. For each combination, we generated 100 random instances: in each instance, we drew an $m \times n$ matrix $A$ and a ground truth vector $x^*\in\RR^n$ with $10\%$ nonzero entries from an \iid normal distribution. For each generated instance, we set $b=Ax^*$, $p=1.2$, and $\theta_p=0.1$.
Table~\ref{table_avg} presents the average performance, including the number of iterations, the accuracy of the recovered point, the objective values, and computation time, across 100 different instances. ABPG-VMAW outperformed ABPG, PG, and PGL. Moreover, ABPG-VMAW converged in fewer iterations and in a shorter amount of time than ABPG, PG, and PGL.

\cref{fig:trace-t} shows which of $y^k$ and $x^k + t_kd^k$ in line~\ref{line:ABPG-VMAW-min-obj} of Algorithm~\ref{alg:ABPG-VMAW} is selected in ABPG-VMAW. The red plot represents the values of $t_k$ that are actually adopted, showing that $x^k + t_kd^k$ is selected. \cref{fig:trace-t} indicates that the proposed method allows larger step-sizes, which may account for its convergence in fewer iterations than ABPG.
Moreover, for ABPG-VMAW, we varied $\lambda$ by scaling $L$ by each value in $\{0.01, 0.05, 0.1, 0.2, 0.5, 0.8, 0.9, 1.0, 1.1, 1.2, 2.0, 5.0, 10.0, 100\}$, where $L$ given by~\eqref{L-smad-L} and compared the results. The maximum number of iterations is 15000. The results are shown in \cref{fig:l-smad-varying}. When the value around $L$ is chosen, both the number of iterations and the computation time are reduced.

\begin{figure}[htbp]
  \centering
  \includegraphics[width=0.6\linewidth]{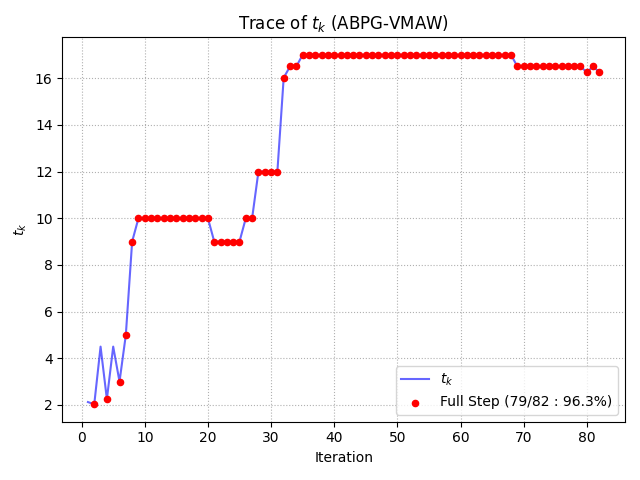}
  \caption{Trace of step-size parameter $t_k$ over iterations}
  \label{fig:trace-t}
\end{figure}
\begin{figure}[!tbp]
    \centering
    \includegraphics[width=0.8\linewidth]{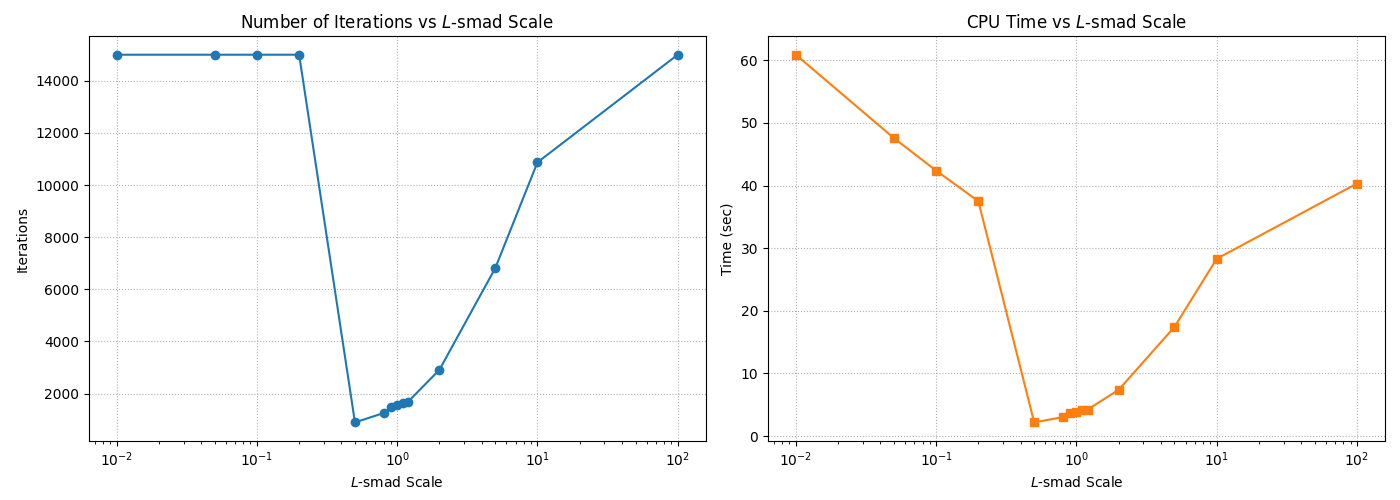}
    \caption{Comparative analysis of algorithmic performance with varying $L$-smad parameter scaling}
    \label{fig:l-smad-varying}
\end{figure}

\begin{table}[htbp]
  \centering
  \captionof{table}{Average number of iterations, objective function value, accuracy, and CPU time for ABPG-VMAW, ABPG, PG, and PGL using random instances of the $\ell_p$-regularized least squares problem \eqref{lp-regular} ($100$ instances for $m=100$, and $10$ instances for $m=1000$)}
  \label{table_avg}
  \centering
  \begin{tabular}{lllrrrr}
    \toprule
    $m$ & $n$ & algorithm & iteration & obj & acc & time \\
    \midrule
    \multirow[t]{12}{*}{100} & \multirow[t]{4}{*}{1500} & ABPG-VMAW & 125 & 0.494 & 0.993 & 0.227 \\
     &  & ABPG & 847 & 0.494 & 0.993 & 1.022 \\
     &  & PG & 1000 & 6.218 & 1.068 & 0.624 \\
     &  & PGL & 1000 & 17.968 & 4.385 & 0.686 \\
    \cmidrule{2-7}
     & \multirow[t]{4}{*}{3000} & ABPG-VMAW & 219 & 0.503 & 0.997 & 1.623 \\
     &  & ABPG & 980 & 0.503 & 0.997 & 3.750 \\
     &  & PG & 1000 & 6.412 & 1.039 & 3.174 \\
     &  & PGL & 1000 & 43.325 & 7.374 & 3.285 \\
    \cmidrule{2-7}
     & \multirow[t]{4}{*}{5000} & ABPG-VMAW & 160 & 0.492 & 0.999 & 2.072 \\
     &  & ABPG & 1000 & 0.492 & 0.999 & 6.497 \\
     &  & PG & 1000 & 6.372 & 1.025 & 5.618 \\
     &  & PGL & 1000 & 79.301 & 10.345 & 5.799 \\
    \midrule
    \multirow[t]{12}{*}{1000} & \multirow[t]{4}{*}{1500} & ABPG-VMAW & 108 & 0.496 & 1.000 & 0.397 \\
     &  & ABPG & 587 & 0.496 & 1.000 & 3.473 \\
     &  & PG & 1000 & 28.745 & 2.067 & 0.777 \\
     &  & PGL & 1000 & 9.874 & 2.326 & 1.037 \\
    \cmidrule{2-7}
     & \multirow[t]{4}{*}{3000} & ABPG-VMAW & 44 & 0.512 & 1.000 & 0.548 \\
     &  & ABPG & 504 & 0.512 & 1.000 & 7.251 \\
     &  & PG & 1000 & 36.533 & 1.958 & 3.775 \\
     &  & PGL & 1000 & 24.393 & 3.981 & 4.343 \\
    \cmidrule{2-7}
     & \multirow[t]{4}{*}{5000} & ABPG-VMAW & 47 & 0.498 & 1.000 & 1.025 \\
     &  & ABPG & 434 & 0.498 & 1.000 & 9.925 \\
     &  & PG & 1000 & 43.081 & 1.875 & 6.493 \\
     &  & PGL & 1000 & 45.076 & 5.704 & 7.414 \\
    \bottomrule
\end{tabular}
\end{table}

\subsection{Nonnegative Linear Inverse Problem}\label{subsec:nonnegative-linear-inverse-problem}
Given a nonnegative matrix $A\in\RR^{m\times n}_+$ and a nonnegative vector $b\in\RR^m_+$, the goal of nonnegative linear inverse problems is to recover a signal $x \in \RR^n_+$ such that $Ax \simeq b$.
Nonnegative linear inverse problems have been studied in image deblurring~\cite{Bertero2009-jq} and positron emission tomography~\cite{Vardi1985-de}, as well as in optimization~\cite{Bauschke2017-hg,Takahashi}. 
To achieve the goal of nonnegative linear inverse problems, we focus on the convex optimization problem given by
\begin{align}
    \min_{x\in\RR^n_+} \quad D_{\rm{KL}}(Ax+b) + \theta_1\|x\|_1,\label{KL-opt}
\end{align}
where the Kullback--Leibler divergence is defined as follows:
\begin{align}
    D_{\rm{KL}}(x,y) = \sum_{i=1}^m \left( x_i \log \frac{x_i}{y_i} + y_i - x_i\right).
\end{align}
Let $f(x) = D_{\rm{KL}}(Ax,b)$ and $g(x)=\theta_1\|x\|_1$.
We use $\phi_{0}(x)=\sum_{i=1}^{n}x_i\log x_i$ as the kernel generating distance for BPG and $\phi_{1}(x)=\phi_{0}+\frac{1}{2}\|x\|^2$ as the kernel generating distance for our algorithm and ABPG. In this case, we also define $C=\interior\dom\phi_{0}=\interior\dom\phi_{1}=\RR^n_+$. When $\sum_{i=1}^ma_{ij}=1$, the pair $(f,\phi_{0})$ is $1$-smad \cite{Bauschke2017-hg} and the pair $(f,\phi_{1})$ is also $1$-smad~\cite{Takahashi}.
We compare ABPG-VMAW with ABPG~\cite{Takahashi}, PGL, and BPG. Those subproblems can be solved in closed form.

The problem setting is as follows. We generate the matrix $A\in\RR^{m\times n}$ and the ground truth $x^*\in\RR^n$, which has 5\% nonzero elements, from an \iid normal distribution. We set $b=Ax^*$. For $(n,m)=(200,500)$ and $\theta_1=0.05$, \cref{nonneg-result1} shows the objective function value $\Psi(x^k)$ and the accuracy $\|x^k-x^*\|$ at each iteration on a logarithmic scale and~\cref{nonnneg-result2} shows those on the time axis.

Under this condition, ABPG-VMAW outperforms the other three methods in terms of the reduction in the objective function value per iteration and per unit time. It is also observed that the objective function values obtained after 1000 iterations are comparable across all methods. Notably, the error with respect to the true value is significantly smaller for ABPG-VMAW than for the other three methods.

\begin{figure}[!tbp]
  \begin{minipage}[b]{0.49\linewidth}
    \centering
    \includegraphics[width=\linewidth]{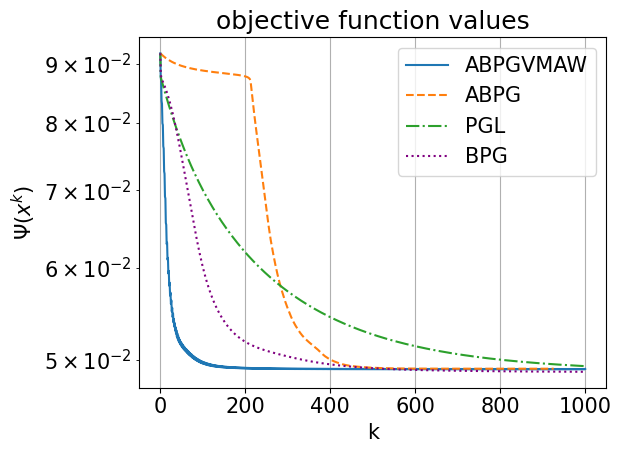}
    \subcaption{Objective function values}
  \end{minipage}
  \begin{minipage}[b]{0.49\linewidth}
    \centering
    \includegraphics[width=\linewidth]{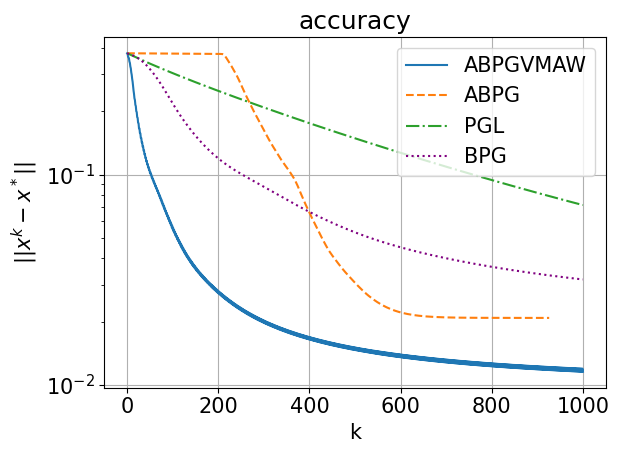}
    \subcaption{Accuracy}
  \end{minipage}
  \caption{Comparison with ABPG-VMAW (blue), ABPG (orange), PGL (green), and BPG (purple) on the nonnegative linear inverse Problem \eqref{KL-opt}}
  \label{nonneg-result1}
\end{figure}
\begin{figure}[!tbp]
  \begin{minipage}[b]{0.49\linewidth}
    \centering
    \includegraphics[width=\linewidth]{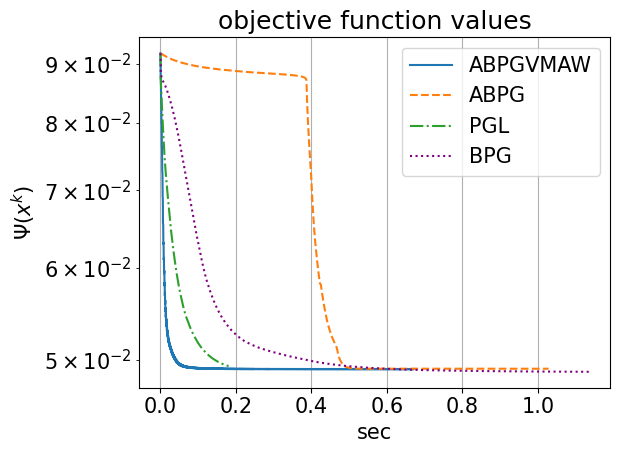}
    \subcaption{Objective function values}
  \end{minipage}
  \begin{minipage}[b]{0.49\linewidth}
    \centering
    \includegraphics[width=\linewidth]{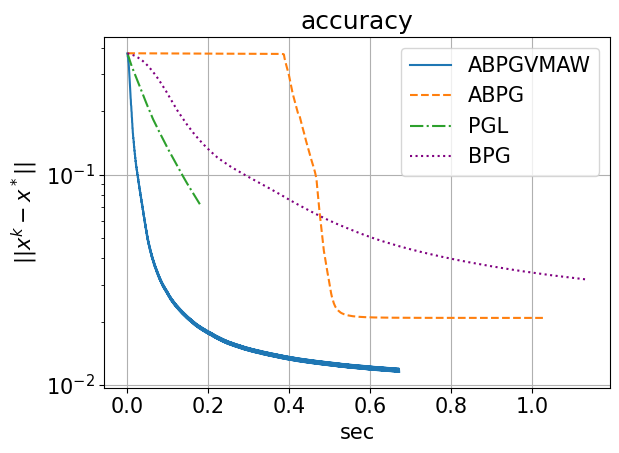}
    \subcaption{Accuracy}
  \end{minipage}
  \caption{Comparison with ABPG-VMAW (blue), ABPG (orange), PGL (green), and BPG (purple) on the nonnegative linear inverse problem \eqref{KL-opt}}
  \label{nonnneg-result2}
\end{figure}

\subsection{Phase Retrieval}
We consider phase retrieval, \ie, recovering a signal $x \in \RR^n$ such that $|\langle a_i, x\rangle|^2 \simeq b_i$ for $i = 1, \ldots, m$, where $a_i\in\RR^n$ describes the model and $b_i\in\RR$ is a observed magnitude. Phase retrieval has been studied in many applications, such as image processing~\cite{Candes2015-mk} and X-ray crystallography~\cite{Patterson1934-sg,Patterson1944-nm} as well as optimization~\cite{Bolte-2018,Takahashi2022-ml,Takahashi2025-py}. We address the following nonconvex optimization problem:
\begin{align}
    \min_{x \in \RR^n} \quad \frac{1}{4}\sum_{i=1}^m\left(|\langle a_i, x\rangle|^2 - b_i\right)^2.\label{PR_problem}
\end{align}
Let $f(x) = \frac{1}{4}\sum_{i=1}^m\left(|\langle a_i, x\rangle|^2 - b_i\right)^2$. When we use $\phi(x) = \frac{1}{4}\|x\|^4 + \frac{1}{2}\|x\|^2$, $(f,\phi)$ is $L$-smad for any $L \geq \sum_{i=1}^m\left(3\|a_i\|^4 + \|a_i\|^2|b_i|\right)$ (see \cite[Lemma 5.1]{Bolte-2018}).
We compare ABPG-VMAW with ABPG, PGL, and BPG. Those subproblems can be solved in closed form (see~\cite[Proposition 5.1]{Bolte-2018} for the subproblem of BPG). We set $\lambda = 1/L$, where $L = \sum_{i=1}^m\left(3\|a_i\|^4 + \|a_i\|^2|b_i|\right)$.

The problem setting is as follows. We generate $a_i\in\RR^n$, $i = 1,\ldots,m$, and the ground truth $x^*\in\RR^n$ from an \iid normal distribution. We set $b_i=|\langle a_i, x^*\rangle|^2$. For $(n,m)=(200,1000)$, \cref{fig:phase-retrieval-iter} shows the objective function value $f(x^k)$ and the accuracy $\|x^k-x^*\|$ at each iteration on a logarithmic scale, and \cref{fig:phase-retrieval-time} shows those on the time axis.

Under this condition, ABPG-VMAW outperforms the other three methods in terms of the reduction in the objective function value per iteration and per unit time. ABPG-VMAW allows large step-sizes while ABPG and BPG use $\lambda = 1/L$, which can be small, and PGL would estimate small step-sizes without Lipschitz continuous gradients. Moreover, the error with the ground truth is significantly smaller for ABPG-VMAW than for the other three methods.

\begin{figure}[!tbp]
    \begin{minipage}[t]{0.49\linewidth}
        \centering
        \includegraphics[width=\linewidth]{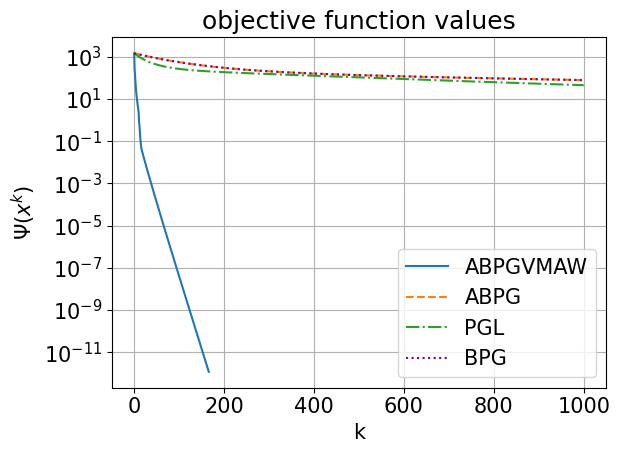}
        \subcaption{Objective function values}
    \end{minipage}
    \begin{minipage}[t]{0.49\linewidth}
        \centering
        \includegraphics[width=\linewidth]{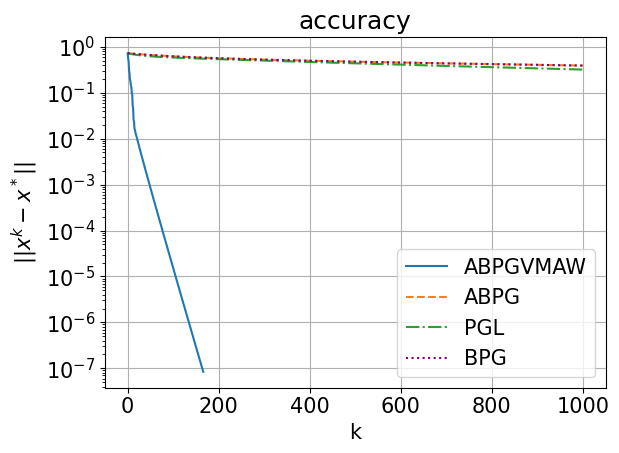}
        \subcaption{Accuracy}
    \end{minipage} 
    \caption{Comparison with ABPG-VMAW (blue), ABPG (orange), PGL (green), and BPG (purple) on the phase retrieval \eqref{PR_problem}}
    \label{fig:phase-retrieval-iter}
\end{figure}

\begin{figure}[!tbp]
    \begin{minipage}[t]{0.49\linewidth}
        \centering
        \includegraphics[width=\linewidth]{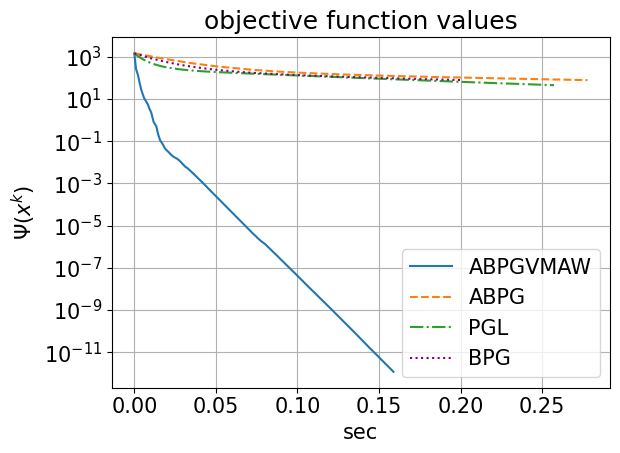}
        \subcaption{Objective function values}
    \end{minipage}
    \begin{minipage}[t]{0.49\linewidth}
        \centering
        \includegraphics[width=\linewidth]{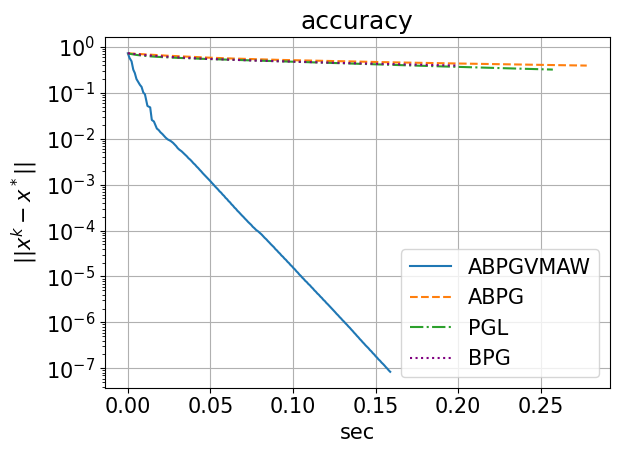}
        \subcaption{Accuracy}
    \end{minipage} 
    \caption{Comparison with ABPG-VMAW (blue), ABPG (orange), PGL (green), and BPG (purple) on the phase retrieval \eqref{PR_problem}}
    \label{fig:phase-retrieval-time}
\end{figure}

\section{Conclusion}\label{sec:conclusion}
In this paper, we propose the approximate Bregman proximal gradient algorithm with variable metric Armijo--Wolfe line search (ABPG-VMAW) for composite nonconvex optimization problems. Our line search condition allows a larger step-size than existing algorithms.
We have established global subsequential convergence under standard assumptions. Moreover, under the KL property, we have proved global convergence to a stationary point even when $g \not\equiv 0$. To the best of our knowledge, this is the first global convergence result for ABPG-type algorithms in this setting.
Moreover, our numerical experiments on $\ell_p$ regularized least squares problems, nonnegative linear inverse problems, and phase retrieval have shown that ABPG-VMAW outperforms ABPG and proximal gradient algorithms. 

On the other hand, our line search procedure would not be well-defined when the objective function is dominated by $g$ rather than by $f$ (in practice, this case is rare when $g$ is a regularizer). Although we establish that our line search is well-defined when $g\equiv 0$ in Section~\ref{appendix:implementation-line-search}, it is important to prove this in the general case $g\not\equiv 0$.

\section*{Declarations}
\textbf{Funding}: This project has been funded by the Japan Society for the Promotion of Science (JSPS) and the Nakajima foundation; JSPS KAKENHI Grant Number JP23K19953 and JP25K21156; JSPS KAKENHI Grant Number JP23H03351.\\
\textbf{Conflict of interest}:
The authors have no competing interests to declare that are relevant to the content of this article.\\
\textbf{Data availability}:
The datasets generated during and/or analyzed during the current study are available in the GitHub repository, \url{https://github.com/ShotaTakahashi/ApproximateBPG}.

\addcontentsline{toc}{section}{References}
\bibliography{main}

@article{Bolte-2018,
author = {Bolte, J\'{e}r\^{o}me and Sabach, Shoham and Teboulle, Marc and Vaisbourd, Yakov},
title = "{First order methods beyond convexity and {Lipschitz} gradient continuity with applications to quadratic inverse problems}",
journal = {SIAM J. Optim.},
volume = {28},
number = {3},
pages = {2131-2151},
year = {2018},
}

@article{Takahashi,
author = {Takahashi, Shota and Takeda, Akiko},
title = "{Approximate {Bregman} proximal gradient algorithm for relatively smooth nonconvex optimization}",
journal = {Comput Optim. Appl.},
year = {2025},
pages = {227--256},
volume = {90},
number = {1},
}

@article{Bonettini_2017,
year = {2017},
volume = {33},
number = {5},
pages = {055005},
author = {S Bonettini and I Loris and F Porta and M Prato and S Rebegoldi},
title = {On the convergence of a linesearch based proximal-gradient method for nonconvex optimization},
journal = {Inverse Probl.},
}

@ARTICLE{Takahashi2022-ml,
  title     = "{New Bregman proximal type algorithms for solving DC optimization
               problems}",
  author    = "Takahashi, Shota and Fukuda, Mituhiro and Tanaka, Mirai",
  journal   = "Comput Optim. Appl.",
  publisher = "Springer Science and Business Media LLC",
  volume    =  {83},
  number    =  {3},
  pages     = {893--931},
  year      =  {2022},
}

@book{rockafellar2009variational,
  title={Variational Analysis},
  author={Rockafellar, R. T. and Wets, R. J. B.},
  year={1997},
  publisher={Springer},
  address = {Heidelberg},
}

@article{Bregman1967TheRM,
  title={The relaxation method of finding the common point of convex sets and its application to the solution of problems in convex programming},
  author={Lev M. Bregman},
  journal={USSR Comput. Math. Math. Phys.},
  year={1967},
  volume={7},
  pages={200-217},
}

@article{Lu-haihao,
author = {Lu, Haihao and Freund, Robert M. and Nesterov, Yurii},
title = {Relatively Smooth Convex Optimization by First-Order Methods, and Applications},
journal = {SIAM J. Optim.},
volume = {28},
number = {1},
pages = {333-354},
year = {2018},
}

@article{90074980-9c0c-33c4-b119-70171dab0b45,
 author = {H\'{e}dy Attouch and J\'{e}r\^{o}me Bolte and Patrick Redont and Antoine Soubeyran},
 journal = {Math. Oper. Res.},
 number = {2},
 pages = {438--457},
 title = {Proximal Alternating Minimization and Projection Methods for Nonconvex Problems: An Approach Based on the {Kurdyka--Łojasiewicz} Inequality},
 volume = {35},
 year = {2010}
}

@article{Kurdyka1998,
author = {Kurdyka, Krzysztof},
journal = {Annales de l'Institut Fourier},
number = {3},
pages = {769-783},
title = {On gradients of functions definable in o-minimal structures},
volume = {48},
year = {1998},
}

@article{AIF_1993__43_5_1575_0,
     author = {\L{}ojasiewicz, Stanislas},
     title = {Sur la g\'eom\'etrie semi- et sous- analytique},
     journal = {Annales de l'Institut Fourier},
     pages = {1575--1595},
     volume = {43},
     number = {5},
     year = {1993},
}

@article{bolte-2014,
    author = {Bolte, J\'{e}r\^{o}me and Sabach, Shoham and Teboulle, Marc},
    title = {Proximal alternating linearized minimization for nonconvex and nonsmooth problems.},
    journal = {Math. Program.},
    year = {2014},
    pages = {459--494},
    volume = {146}
}

@article{Lewis2013,
    author = {Lewis, A. S. and Overton, M. L.},
    title = {Nonsmooth optimization via quasi-Newton methods.},
    journal = {Math. Program.},
    year = {2013},
    volume ={141},
    pages = {135--163}

}

@article{Miantao2024,
      title={Coderivative-Based Newton Methods with Wolfe Linesearch for Nonsmooth Optimization}, 
      author={Miantao, Chao and Boris, S. ,Mordukhovich and Zijian, Shi and Jin, Zhang},
      journal = {arXiv preprint arXiv:2407.02146},
      year={2024},
      archivePrefix={arXiv},
      primaryClass={math.OC},
}

@article{doi:10.1137/18M1194456,
author = {Chung, Julianne and Gazzola, Silvia},
title = {Flexible {Krylov} Methods for $l_p$ Regularization},
journal = {SIAM J. Sci. Comput.},
volume = {41},
number = {5},
pages = {S149--S171},
year = {2019}
}

@INPROCEEDINGS{7472557,
  author={Wen, Fei and Liu, Peilin and Liu, Yipeng and Qiu, Robert C. and Yu, Wenxian},
  booktitle={Proc. IEEE Int. Conf. Acoust., Speech, Signal Process. (ICASSP)}, 
  title={Robust sparse recovery for compressive sensing in impulsive noise using $\ell_p$-norm model fitting}, 
  year={2016},
  pages={4643--4647}
}

@book{doi:10.1137/1.9781611974997,
author = {Beck, Amir},
title = {First-Order Methods in Optimization},
publisher = {SIAM},
year = {2017},
address = {Philadelphia},
edition   = {}
}

@ARTICLE{Bruck1975-et,
  title   = "{An iterative solution of a variational inequality for certain
             monotone operators in {Hilbert} space}",
  author  = "Bruck, R E",
  journal = "Bull. Am. Math. Soc.",
  volume  =  81,
  number  =  5,
  pages   = "890--892",
  year    =  1975
}

@article{lions1979splitting,
  title={Splitting algorithms for the sum of two nonlinear operators},
  author={Lions, P.L. and Mercier, Bertrand},
  journal={SIAM J. Numer. Anal.},
  volume={16},
  number={6},
  pages={964--979},
  year={1979},
  publisher={SIAM}
}

@article{PASSTY1979383,
title = {Ergodic convergence to a zero of the sum of monotone operators in Hilbert space},
journal = {J. Math. Anal. Appl.},
volume = {72},
number = {2},
pages = {383-390},
year = {1979},
issn = {0022-247X},
author = {Passty, Gregory B}
}

@article{fista,
author = {Beck, Amir and Teboulle, Marc},
title = {A Fast Iterative Shrinkage-Thresholding Algorithm for Linear Inverse Problems},
journal = {SIAM J. Imag. Sci.},
volume = {2},
number = {1},
pages = {183-202},
year = {2009},
}

@ARTICLE{MAP,
  author={Bouman, C. and Sauer, K.},
  journal={IEEE Trans. Image Process.}, 
  title={A generalized Gaussian image model for edge-preserving MAP estimation}, 
  year={1993},
  volume={2},
  number={3},
  pages={296-310}
}

@book{MAP2,
    author = {Elad,Michael},
    title = {Sparse and Redundant Representations},
    publisher = {Springer},
    address = {New York},
    year = {2010},
}

@book{ridge,
  title={The Elements of Statistical Learning: Data Mining, Inference, and Prediction},
  author={Hastie, T. and Tibshirani, R. and Friedman, J.H.},
  year={2001},
  publisher={Springer},
  address={New York}
}

@article{LASSO,
    author = {Tibshirani, R.},
    title = {Regression Shrinkage and Selection via the Lasso},
    journal = {J. R. Stat. Soc Series B Stat. Methodol.},
    year = {1996},
    volume = {58},
    number = {1},
    pages = {267--288}
}

@ARTICLE{Bonettini2016-bn,
  title     = "{Variable metric inexact line-search-based methods for nonsmooth optimization}",
  author    = "Bonettini, S and Loris, I and Porta, F and Prato, M",
  journal   = "SIAM J. Optim.",
  publisher = "Society for Industrial and Applied Mathematics",
  volume    =  26,
  number    =  2,
  pages     = "891--921",
  year      =  2016
}

@ARTICLE{Hanzely2021-sz,
  title     = "{Accelerated Bregman proximal gradient methods for relatively
               smooth convex optimization}",
  author    = "Hanzely, Filip and Richtárik, Peter and Xiao, Lin",
  journal   = "Comput. Optim. Appl.",
  publisher = "Springer",
  volume    =  79,
  number    =  2,
  pages     = "405--440",
  year      =  2021
}

@ARTICLE{Mukkamala2020-ea,
  title     = "{Convex-concave backtracking for inertial {Bregman} proximal
               gradient algorithms in nonconvex optimization}",
  author    = "Mukkamala, Mahesh Chandra and Ochs, Peter and Pock, Thomas and
               Sabach, Shoham",
  journal   = "SIAM J. Math. Data Sci.",
  publisher = "Society for Industrial \& Applied Mathematics (SIAM)",
  volume    =  2,
  number    =  3,
  pages     = "658--682",
  year      =  2020,
  language  = "en"
}

@INPROCEEDINGS{Mukkamala2019-mk,
  title     = "{Beyond alternating updates for matrix factorization with
               inertial Bregman proximal gradient algorithms}",
  author    = "Mukkamala, Mahesh Chandra and Ochs, Peter",
  editor  = "H. Wallach and H. Larochelle and A. Beygelzimer and F. d\textquotesingle Alch\'{e}-Buc and E. Fox and R. Garnett",
  booktitle = "Adv. Neural Inf. Process. Syst.",
  pages     = "4268--4278",
  series    =  32,
  year      =  2019
}

@ARTICLE{Takahashi2026-rv,
  title     = "{Majorization-minimization Bregman proximal gradient algorithms
               for NMF with the Kullback–Leibler divergence}",
  author    = "Takahashi, Shota and Tanaka, Mirai and Ikeda, Shiro",
  journal   = "J. Optim. Theory Appl.",
  publisher = "Springer Science and Business Media LLC",
  volume    =  208,
  number    =  1,
  month     =  jan,
  year      =  2026,
  language  = "en"
}

@ARTICLE{Takahashi2023-uh,
  title     = "{Blind deconvolution with non-smooth regularization via Bregman
               proximal {DCAs}}",
  author    = "Takahashi, Shota and Tanaka, Mirai and Ikeda, Shiro",
  journal   = "Signal Process.",
  publisher = "Elsevier BV",
  volume    =  202,
  pages     =  108734,
  year      =  2023,
  language  = "en"
}

@ARTICLE{Bauschke2017-hg,
  title     = "{A descent lemma beyond Lipschitz gradient continuity:
               First-order methods revisited and applications}",
  author    = "Bauschke, Heinz H and Bolte, Jérôme and Teboulle, Marc",
  journal   = "Math. Oper. Res.",
  publisher = "Institute for Operations Research and the Management Sciences
               (INFORMS)",
  volume    =  42,
  number    =  2,
  pages     = "330--348",
  year      =  2017,
  language  = "en"
}

@BOOK{Rockafellar1970-if,
  title     = "Convex Analysis",
  author    = "Rockafellar, R Tyrrell",
  publisher = "Princeton University Press",
  year      =  1970,
  language  = "en",
  address   = "Princeton, New Jersey",
}

@ARTICLE{Attouch2009-wf,
  title   = "{On the convergence of the proximal algorithm for nonsmooth
             functions involving analytic features}",
  author  = "Attouch, Hedy and Bolte, Jérôme",
  journal = "Math. Program.",
  volume  =  116,
  number  =  1,
  pages   = "5--16",
  year    =  2009
}

@BOOK{Nocedal2006-sb,
  title     = "{Numerical Optimization}",
  author    = "Nocedal, Jorge and Wright, Stephen J",
  publisher = "Springer",
  address   = "New York",
  edition   = "2nd",
  series    = "Springer Series in Operations Research and Financial Engineering",
  year      =  2006
}

@ARTICLE{Kullback1951-yv,
  title   = "{On information and sufficiency}",
  author  = "Kullback, S. and Leibler, R. A.",
  journal = "Ann. Math. Stat.",
  volume  =  22,
  number  =  1,
  pages   = "79--86",
  year    =  1951
}

@INPROCEEDINGS{Itakura1968-en,
  title     = "{Analysis synthesis telephony based on the maximum likelihood
               method}",
  booktitle = "Proc. 6th Int. Congr. Acoust.",
  author    = "Itakura, F. and Saito, S.",
  pages     = "17--20",
  year      =  1968
}

@ARTICLE{Bauschke1997-vk,
  title     = "{Legendre functions and the method of random Bregman projections}",
  author    = "Bauschke, H. H. and Borwein, J. M.",
  journal   = "J. Convex Anal.",
  publisher = "Citeseer",
  volume    =  4,
  number    =  1,
  pages     = "27--67",
  year      =  1997
}

@ARTICLE{Dhillon2008-zz,
  title     = "{Matrix nearness problems with {Bregman} divergences}",
  author    = "Dhillon, I. S. and Tropp, J. A.",
  journal   = "SIAM J. Matrix Anal. Appl.",
  publisher = "Society for Industrial \& Applied Mathematics (SIAM)",
  volume    =  29,
  number    =  4,
  pages     = "1120--1146",
  year      =  2008
}

@ARTICLE{Vardi1985-de,
  title     = "{A statistical model for positron emission tomography}",
  author    = "Vardi, Y and Shepp, L A and Kaufman, L",
  journal   = "J. Am. Stat. Assoc.",
  publisher = "Taylor \& Francis",
  volume    =  80,
  number    =  389,
  pages     = "8--20",
  year      =  1985
}

@ARTICLE{Bertero2009-jq,
  title     = "{Image deblurring with Poisson data: from cells to galaxies}",
  author    = "Bertero, M and Boccacci, P and Desiderà, G and Vicidomini, G",
  journal   = "Inverse Probl.",
  publisher = "IOP Publishing",
  volume    =  25,
  number    =  12,
  pages     =  123006,
  year      =  2009,
}

@ARTICLE{Dragomir2021-rv,
  title     = "{Quartic first-order methods for low-rank minimization}",
  author    = "Dragomir, R. A. and d'Aspremont, A. and Bolte, J.",
  journal   = "J. Optim. Theory Appl.",
  publisher = "Springer",
  volume    =  189,
  number    =  2,
  pages     = "341--363",
  year      =  2021
}

@ARTICLE{Candes2015-mk,
  title     = "{Phase retrieval from coded diffraction patterns}",
  author    = "Cand\`{e}s, Emmanuel J and Li, Xiaodong and Soltanolkotabi, Mahdi",
  journal   = "Appl. Comput. Harmon. Anal.",
  publisher = "Elsevier",
  volume    =  39,
  number    =  2,
  pages     = "277--299",
  year      =  2015
}

@ARTICLE{Patterson1934-sg,
  title   = "{A {Fourier} series method for the determination of the components
             of interatomic distances in crystals}",
  author  = "Patterson, A L",
  journal = "Physical Review",
  volume  =  46,
  number  =  5,
  pages   = "372--376",
  year    =  1934
}

@ARTICLE{Patterson1944-nm,
  title     = "{Ambiguities in the X-ray analysis of crystal structures}",
  author    = "Patterson, A L",
  journal   = "Phys. Rev.",
  publisher = "American Physical Society (APS)",
  volume    =  65,
  number    = "5-6",
  pages     = "195--201",
  year      =  1944
}

@ARTICLE{Takahashi2025-py,
  title         = "{Fast Frank--Wolfe algorithms with adaptive Bregman step-size
                   for weakly convex functions}",
  author        = "Takahashi, Shota and Pokutta, Sebastian and Takeda, Akiko",
  journal       = "arXiv preprint arXiv:2504.04330",
  year          =  2025
}

\appendix
\section{Appendix: Implementation of Line Search}\label{appendix:implementation-line-search}
In order to obtain a step-size $t_k$ satisfying both \eqref{condition1} and \eqref{condition2}, we adopt a bisection method for the line search procedure in Algorithm~\ref{alg:vmw}. 

\begin{algorithm}[!t]
\caption{Variable Metric Armijo--Wolfe Line Search}
\label{alg:vmw}
\DontPrintSemicolon
\KwIn{Functions $f$, $g$, $\phi$ and $\lambda \in \RR$}
\KwOut{Step-size $t$}
\Procedure{$\Linesearch_k(f,g, \phi, \lambda)$}{
Choose $0 < c_1 < c_2 < 1$ and $0< \mu < 1 < \eta$\;
$q_1 \gets 1$\;

\If{$A_k(q_1) \geq 0$}{
\While{$A_k(q_1) \geq 0$}{
$q_2 \gets q_1$\;
$q_1 \gets \mu q_1$\;
}
}
\Else{\While{$A_k(q_1) < 0$}{
$q_2 \gets q_1$\;
$q_1 \gets \eta q_1$\;
}}
$\alpha \gets \min\{q_1, q_2\}$\;
$\beta \gets \max\{q_1, q_2\}$\;
$t \gets (\alpha + \beta)/2$\;

\Loop{
        \If{$A_k(t) \geq 0$}{
            $\beta \gets t$\;
        }
        \ElseIf{$W_k(t) \leq 0$}{
            $\alpha \gets t$\;
        }
        \Else{
            \Return{$t$}\;
        }
        $t \gets (\alpha + \beta)/2$\;       
}}
\end{algorithm}

\subsection{Special Case: \texorpdfstring{$g \equiv 0$}{g = 0} and \texorpdfstring{$\dom\phi = \RR^n$}{dom phi = Rn}}
We consider the special case $g \equiv 0$ and $\dom\phi = \RR^n$, \ie, $\Psi \equiv f$. $\ell_p$ regularized least squares problems in Section~\ref{subsec:lp-regularized-least-squares} used this setting.
We have Armijo--Wolfe conditions for $x \in\interior\dom\phi$ and $d\in\RR^n$ as follows:
\begin{align}
    A(t) &= f(x + t d) - f(x) - c_1 t \left(\langle \nabla f(x), d\rangle + \frac{1}{2\lambda}\langle \nabla^2 \phi (x)d, d\rangle\right)<0, \label{condition1-g0}\\
    W(t) &= \langle \nabla f(x+td), d\rangle - c_2 \langle \nabla f(x), d\rangle>0.\label{condition2-g0}
\end{align}
We prove that \eqref{condition1-g0} and \eqref{condition2-g0} are well-defined, \ie, there exists a number $t$ such that \eqref{condition1-g0} and \eqref{condition2-g0} hold simultaneously.
\begin{lemma}\label{Armijo-Wolfe-step-exist1}
    Suppose that \cref{asmp:function-condition,asmp:feasibility,asmp:strongly-convex} hold. Let $\lambda > 0$, $0 < c_1 < c_2 < 1$, and $x \in\interior\dom\phi$, and let $d = y - x$ be defined by~\eqref{subproblem-prop}. There exists a pair of positive numbers $(t_{\beta},t_{\alpha})$, where $t_{\beta}>t_{\alpha}$, such that $A(t) < 0$ holds for any $t\in [0,t_{\alpha})$ and $A(t)\geq 0$ holds for any $t > t_{\beta}$.
\end{lemma}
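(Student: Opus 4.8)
The plan is to study the scalar function $A$ from~\eqref{condition1-g0} on $[0,\infty)$ directly. Set $Q:=\langle\nabla^2\phi(x)d,d\rangle$ and $m:=\langle\nabla f(x),d\rangle+\frac{1}{2\lambda}Q$, so that $A(t)=f(x+td)-f(x)-c_1 t m$. First I would record, from the search direction property (\cref{Search_direction_property}, inequality~\eqref{ineq:search-direction}) specialized to $g\equiv 0$, that $\langle\nabla f(x),d\rangle\le-\frac{1}{\lambda}Q<0$; in particular $Q>0$, hence $d\neq 0$ (if $d=0$ the point $x$ is already stationary and the line search is vacuous), and by $\sigma$-strong convexity $Q\ge\sigma\|d\|^2>0$. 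Substituting the bound on $\langle\nabla f(x),d\rangle$ into $m$ gives $m\le-\frac{1}{2\lambda}Q<0$, which is the sign information driving both halves of the argument.

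For the behavior near $0$, I would use $A(0)=0$ together with the $\mathcal{C}^1$ regularity of $f$ on $\RR^n$ (\cref{asmp:function-condition}(ii)) to differentiate $A$ at $0$:
\begin{align}
A'(0)=\langle\nabla f(x),d\rangle-c_1 m=(1-c_1)\langle\nabla f(x),d\rangle-\frac{c_1}{2\lambda}Q.
\end{align}
Because $0<c_1<1$, the first summand is strictly negative and the second is strictly negative too, so $A'(0)<0$; therefore there is $t_\alpha>0$ with $A(t)<0$ for all $t\in(0,t_\alpha)$ (and $A(0)=0$), which is the first half of the claim. For the behavior at infinity I would exploit that, since $\dom\phi=\RR^n$, we have $\cl C=\RR^n$ and $\Psi\equiv f$, so $f(x+td)\ge\Psi^*>-\infty$ by \cref{asmp:function-condition}(iv). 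Hence
\begin{align}
A(t)\ge\Psi^*-f(x)+c_1\,t\,(-m),
\end{align}
whose right-hand side is affine in $t$ with strictly positive slope $c_1(-m)>0$ and is therefore nonnegative for every $t\ge\hat t:=(f(x)-\Psi^*)/(c_1(-m))$. Taking $t_\beta$ larger than $\max\{\hat t,t_\alpha\}$ (shrinking $t_\alpha$ beforehand if needed to keep $t_\beta>t_\alpha$) gives $A(t)\ge 0$ for all $t>t_\beta$, completing the argument.

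I do not expect a genuine obstacle; the argument is elementary once the signs are pinned down. The only delicate points are (i) extracting $d\neq 0$ and $Q>0$ from the strict inequality in~\eqref{ineq:search-direction}, and (ii) the boundary value $A(0)=0$, so that the stated conclusion ``$A(t)<0$ on $[0,t_\alpha)$'' must be read as $A(t)<0$ on $(0,t_\alpha)$. The coercivity-in-$t$ step is the one that genuinely uses the structural hypotheses: it relies on the lower boundedness of $\Psi$ from \cref{asmp:function-condition}(iv) and on the sign $m<0$ furnished by the search direction property; without $\dom\phi=\RR^n$ one would additionally have to rule out $x+td$ leaving $\cl C$, which is why this special case is treated separately.
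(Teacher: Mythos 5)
Your proof is correct and follows essentially the same route as the paper's: both establish $A(0)=0$ together with $A'(0)=(1-c_1)\langle\nabla f(x),d\rangle-\frac{c_1}{2\lambda}\langle\nabla^2\phi(x)d,d\rangle<0$ via the search direction property to get negativity on a right-neighborhood of $0$, and both use the lower boundedness of $f$ from \cref{asmp:function-condition}(iv) against the linearly decreasing comparison term (your explicit affine bound with slope $c_1(-m)>0$ is just the paper's ``the right-hand side tends to $-\infty$'' spelled out quantitatively). Your side remarks on $d\neq 0$ and on reading the conclusion as $A(t)<0$ on $(0,t_\alpha)$ because $A(0)=0$ are accurate and, if anything, slightly more careful than the paper's own wording.
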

\begin{proof}
    Differentiating $A(t)$ with respect to $t$ , we obtain
    \begin{align}
        A'(t) = \langle \nabla f(x+td),d\rangle - c_1\left(\langle \nabla f(x), d\rangle + \frac{1}{2\lambda}\langle \nabla^2 \phi (x)d, d\rangle\right)
    \end{align}
    and substituting $t = 0$ yields
    \begin{align}
        A'(0) =  (1 - c_1)\langle \nabla f(x), d\rangle - \frac{c_1}{2\lambda}\langle \nabla^2 \phi (x)d, d\rangle < 0,
    \end{align}
    where the last inequality holds from \cref{Search_direction_property} and $c_1 < 1$. Combining $A(0) = 0$, it holds that there exists a positive number $t_\alpha$ such that $A(t)< 0$ for any $t\in [0,t_\alpha)$.
    
    Next, we show the existence of $t_{\beta}$. Since $c(t) = f(x+td)$ is bounded below from \cref{asmp:function-condition}(iv) and $f(x) + c_1 t \left(\langle \nabla f(x), d\rangle + \frac{1}{2\lambda}\langle \nabla^2 \phi (x)d, d\rangle\right)\to -\infty$ as $t\to\infty$, there exists a positive number $t_{\beta}$ such that for any $t > t_{\beta}$ the following inequality holds:
    \begin{align}
        f(x+t d) \geq f(x) + c_1t \left(\langle \nabla f(x), d\rangle + \frac{1}{2\lambda}\langle \nabla^2 \phi (x)d, d\rangle\right),\label{Armijo-not-satis}
    \end{align}
    which implies $A(t) \geq 0$. Note that, from the definition of $t_\alpha$, we have $t_{\alpha} < t_{\beta}$.
\end{proof}
\begin{lemma}\label{Armijo-Wolfe-step-exist2}
    Suppose that \cref{asmp:function-condition,asmp:feasibility,asmp:strongly-convex} hold. Let a pair of positive numbers $(t_{\alpha}, t_{\beta})$, where $t_{\beta}>t_{\alpha}$, such that $A(t_{\alpha}) < 0$ and $A(t_{\beta})\geq 0$ hold. There exists a nonempty interval $[\tilde{t}_{\alpha},\tilde{t}_{\beta}]$ in $[t_{\alpha},t_{\beta}]$ such that \eqref{condition1-g0} and \eqref{condition2-g0} hold.
\end{lemma}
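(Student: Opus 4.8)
The plan is to pin down a single interior point $\tilde{t} \in (t_\alpha, t_\beta)$ at which \eqref{condition1-g0} and \eqref{condition2-g0} both hold strictly, and then enlarge it to an interval by continuity. First I would set
\[
    \hat{t} := \inf\Set{t \in [t_\alpha, t_\beta]}{A(t) \geq 0}.
\]
This set is nonempty because $A(t_\beta) \geq 0$ by hypothesis, and closed because $A$ is continuous---indeed $\mathcal{C}^1$, since in the present special case $f$ is $\mathcal{C}^1$ on $C = \RR^n$---so the infimum is attained. As $A(t_\alpha) < 0$ we have $\hat{t} > t_\alpha$, with $A(t) < 0$ for every $t \in [t_\alpha, \hat{t})$ and $A(\hat{t}) = 0$ by continuity.

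Next I would apply the mean value theorem to $A$ on $[t_\alpha, \hat{t}]$ to obtain $\tilde{t} \in (t_\alpha, \hat{t})$ with
\[
    A'(\tilde{t}) = \frac{A(\hat{t}) - A(t_\alpha)}{\hat{t} - t_\alpha} = \frac{-A(t_\alpha)}{\hat{t} - t_\alpha} > 0 .
\]
Expanding $A'(\tilde{t}) > 0$ yields $\langle \nabla f(x + \tilde{t}d), d\rangle > c_1\langle \nabla f(x), d\rangle + \frac{c_1}{2\lambda}\langle \nabla^2\phi(x)d, d\rangle$. The crux is to bound this right-hand side below by $c_2 \langle \nabla f(x), d\rangle$: by \cref{Search_direction_property} with $g \equiv 0$ we have $\langle \nabla f(x), d\rangle < 0$, so $c_1 \langle \nabla f(x), d\rangle > c_2 \langle \nabla f(x), d\rangle$ because $c_1 < c_2$, while $\frac{c_1}{2\lambda}\langle \nabla^2\phi(x)d, d\rangle \geq 0$ by positive semidefiniteness of $\nabla^2\phi$. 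Hence $\langle \nabla f(x + \tilde{t}d), d\rangle > c_2 \langle \nabla f(x), d\rangle$, i.e.\ $W(\tilde{t}) > 0$, so \eqref{condition2-g0} holds at $\tilde{t}$; and $\tilde{t} \in [t_\alpha, \hat{t})$ gives $A(\tilde{t}) < 0$, so \eqref{condition1-g0} holds at $\tilde{t}$ as well.

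Finally, since $A$ and $W$ are continuous and both strict inequalities hold at the interior point $\tilde{t}$, there is $\epsilon > 0$ such that $A(t) < 0$ and $W(t) > 0$ for all $t \in (\tilde{t} - \epsilon, \tilde{t} + \epsilon)$; choosing $[\tilde{t}_\alpha, \tilde{t}_\beta]$ to be any nondegenerate closed subinterval of $(\tilde{t} - \epsilon, \tilde{t} + \epsilon) \cap [t_\alpha, t_\beta]$ finishes the proof. The only genuinely substantive step is the inequality chain in the middle paragraph; everything around it is the classical existence argument for Armijo--Wolfe step-sizes. The essential input there is that the search direction property (\cref{Search_direction_property}) forces $\langle \nabla f(x), d\rangle$ to be \emph{strictly} negative, which is precisely what allows the positive gap $c_2 - c_1$ to absorb the passage from $A'(\tilde{t}) > 0$ to $W(\tilde{t}) > 0$.
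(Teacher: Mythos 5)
Your proof is correct, but it takes a genuinely different route from the paper. You take $\hat{t}$ to be the \emph{first} point in $[t_\alpha,t_\beta]$ where $A$ vanishes, apply the mean value theorem on $[t_\alpha,\hat{t}]$ to get $A'(\tilde{t})>0$ at some interior $\tilde{t}$, and convert this into $W(\tilde{t})>0$ via the chain $\langle\nabla f(x+\tilde{t}d),d\rangle > c_1\langle\nabla f(x),d\rangle + \frac{c_1}{2\lambda}\langle\nabla^2\phi(x)d,d\rangle \geq c_1\langle\nabla f(x),d\rangle > c_2\langle\nabla f(x),d\rangle$, using $c_1<c_2$ and the strict negativity of $\langle\nabla f(x),d\rangle$ from \cref{Search_direction_property} (note this, like the paper's argument, tacitly uses $d\neq 0$, which is forced anyway by $A(t_\alpha)<0$). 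This is the classical Nocedal--Wright-style existence argument for Wolfe steps, and it is legitimate here because in this special case $f$ is $\mathcal{C}^1$ on $C=\RR^n$, so $A$ is $\mathcal{C}^1$ and the MVT applies. The paper instead follows the Lewis--Overton scheme: it defines $t^*$ as the supremum of points up to which the curvature condition \emph{fails}, integrates the directional derivative over $[t_\alpha,t^*]$ to show $A(t^*)<A(t_\alpha)<0$, and then extracts, just beyond $t^*$, an interval where $A<0$ and $W>0$ simultaneously. Your argument is shorter and avoids the integral and the ``almost everywhere'' language; the paper's integral formulation is the one that survives when $f$ is merely locally Lipschitz (the setting of Lewis--Overton) and it pins the admissible interval to a specific location (just past $t^*$), which is the structure their bisection analysis in \cref{Well-definedness} leans on. Both establish the stated conclusion.
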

\begin{proof}
    Since $A(t_{\alpha}) < 0$ holds, we can define $t^*$ by 
    \begin{align}
        t^* \coloneqq \sup \left\{ t\in [t_{\alpha} , t_{\beta} ]\,\middle|\,\forall s \in [t_{\alpha} , t] , \langle \nabla f(x+sd),d\rangle \leq c_2 \langle \nabla f(x),d\rangle\right\}.
    \end{align}
    Then, $\langle \nabla f(x+td), d \rangle \leq c_2 \langle \nabla f(x), d \rangle$ holds almost everywhere on the interval $[t_{\alpha},t^*]$, and therefore we obtain
    \begin{align}
        f(x+t^*d) - f(x+t_{\alpha}d)
        &= \int_{t_{\alpha}}^{t^*}\langle \nabla f(x+td), d \rangle dt\\
        &\leq  \int_{t_{\alpha}}^{t^*}c_2\langle \nabla f(x), d \rangle dt\\
        &= c_2(t^*-t_{\alpha})\langle \nabla f(x), d \rangle \\
        &< c_1(t^*-t_{\alpha})\langle \nabla f(x), d \rangle, 
    \end{align}
    where the first inequality holds from $\langle \nabla f(x+td), d \rangle \leq c_2 \langle \nabla f(x), d \rangle$, and the last inequality holds from $c_1<c_2$ and \cref{Search_direction_property}. By adding $-\frac{c_1 t^*}{2\lambda}\langle\nabla^2\phi(x)d,d\rangle \leq -\frac{c_1t_\alpha}{2\lambda}\langle\nabla^2\phi(x)d,d\rangle$ and rearranging the terms, we obtain
    \begin{align}
        &\quad f(x+t^*d) - c_1t^*\left(\langle \nabla f(x), d \rangle + \frac{1}{2\lambda}\langle\nabla^2\phi(x)d,d\rangle\right)\\
        &< f(x+t_{\alpha}d) - c_1t_{\alpha}\left(\langle \nabla f(x), d \rangle + \frac{1}{2\lambda}\langle\nabla^2\phi(x)d,d\rangle\right),
    \end{align}
    which implies
    \begin{align}
        A(t^*) < A(t_{\alpha}) < 0.
    \end{align}

    From the continuity of $A(t)$, there exists a positive number $\Delta$ such that $A(t)<0$ holds for any $t$ in $[t^*,t^*+\Delta]$, and from the definition of $t^*$ there exists a nonempty subset $[\tilde{t}_{\alpha},\tilde{t}_{\beta}]$ of $[t^*,t^*+\delta]$ such that $W(t) > 0$ always holds \ie, \eqref{condition1-g0} and \eqref{condition2-g0} hold simultaneously on $[\tilde{t}_{\alpha},\tilde{t}_{\beta}]$.
\end{proof}
\cref{Armijo-Wolfe-step-exist2} ensures that when the sign of $A(t)$ changes from negative to positive at two points, an Armijo--Wolfe step-size exists between those two points. 

Next, using \Cref{Armijo-Wolfe-step-exist1,Armijo-Wolfe-step-exist2}, we show the well-definedness of the line search procedure \ie, that the line search procedure terminates in a finite number of steps. 
Its proof is almost the same as~\cite[Theorem 4.7]{Lewis2013}.
\begin{theorem}[Well-definedness of the line search procedure]\label{Well-definedness}
    Suppose that \cref{asmp:function-condition,asmp:feasibility,asmp:strongly-convex} hold.
    Whenever the second loop of the line search procedure in an iteration terminates, the final trial step $t$ is an Armijo--Wolfe step, provided that $\lambda$ is small enough. If, on the other hand, the line search procedure does not terminate, then it eventually generates a nested sequence of finite intervals $[\alpha , \beta]$, halving in length at each iteration, and each containing a set of nonzero measure of Armijo--Wolfe steps. These intervals converge to a step $t_0>0$ such that
    \begin{align}\label{limit-of-lineS}
        A(t_0)=0\quad \text{and}\quad
        W(t_0) > 0
    \end{align}
    hold, \ie, $t_0$ is an Armijo--Wolfe step.
\end{theorem}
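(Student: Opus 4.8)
The argument adapts \cite[Theorem 4.7]{Lewis2013} to the conditions \eqref{condition1-g0}--\eqref{condition2-g0}, treating the two phases of \cref{alg:vmw} in turn. For the bracketing phase the goal is to show it terminates and produces $0<\alpha<\beta$ with $A(\alpha)<0$ and $A(\beta)\geq 0$. If $A(1)\geq 0$, the procedure replaces $q_1$ by $\mu q_1$ with $\mu\in(0,1)$; by \cref{Armijo-Wolfe-step-exist1} there is $t_\alpha>0$ with $A(t)<0$ on $[0,t_\alpha)$, so after finitely many steps $q_1=\mu^j<t_\alpha$ and the loop exits with $A(q_1)<0$, $A(q_2)\geq 0$, $q_1<q_2$. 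If $A(1)<0$, it replaces $q_1$ by $\eta q_1$ with $\eta>1$; again by \cref{Armijo-Wolfe-step-exist1} there is $t_\beta>t_\alpha$ with $A(t)\geq 0$ for $t>t_\beta$, so after finitely many steps $q_1=\eta^j>t_\beta$ and the loop exits with $A(q_1)\geq 0$, $A(q_2)<0$, $q_2<q_1$. In either case $\alpha:=\min\{q_1,q_2\}$ and $\beta:=\max\{q_1,q_2\}$ satisfy $0<\alpha<\beta$ and $A(\alpha)<0\leq A(\beta)$.

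For the bisection loop I would track the invariant that at the start of each iteration $A(\alpha)<0$, $A(\beta)\geq 0$, $\alpha<\beta$, while $\beta-\alpha$ halves at every iteration: when $A(t)\geq 0$ we reset $\beta\gets t$; when $A(t)<0$ and $W(t)\leq 0$ we reset $\alpha\gets t$; otherwise $A(t)<0$ and $W(t)>0$, and $t$ is returned. Hence, if the loop terminates, the returned $t$ satisfies \eqref{condition1-g0} and \eqref{condition2-g0} simultaneously, i.e., it is an Armijo--Wolfe step; here the smallness of $\lambda$ is what enlarges the Armijo margin $\frac{c_1 t}{2\lambda}\langle\nabla^2\phi(x)d,d\rangle$, and hence the set of admissible steps, so that such a $t$ is reached. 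Moreover, since $A(\alpha)<0\leq A(\beta)$ is preserved, \cref{Armijo-Wolfe-step-exist2} guarantees that each bracket $[\alpha,\beta]$ produced by the loop contains a subinterval of positive measure consisting of Armijo--Wolfe steps.

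It remains to treat the case in which the loop never returns. Then the nested intervals $[\alpha_j,\beta_j]$ have $\alpha_j$ nondecreasing, $\beta_j$ nonincreasing, $\beta_j-\alpha_j=2^{-j}(\beta_0-\alpha_0)\to 0$, and $\alpha_j\geq\alpha_0>0$, so they shrink to a single $t_0>0$. Letting $j\to\infty$, $A(\beta_j)\geq 0$ gives $A(t_0)\geq 0$, while choosing (via \cref{Armijo-Wolfe-step-exist2}) an Armijo--Wolfe step $s_j\in[\alpha_j,\beta_j]$ gives $A(s_j)<0$, whence $A(t_0)\leq 0$; thus $A(t_0)=0$, the first relation in \eqref{limit-of-lineS}. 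For the second, I would use that between $s_j$ (where $A<0$) and $\beta_j$ (where $A\geq 0$) the continuous function $A$ changes sign: with $z_j:=\inf\{z\in[s_j,\beta_j]:A(z)\geq 0\}$ one has $A(z_j)=0$, $A<0$ on $[s_j,z_j)$, and therefore $A'(z_j)\geq 0$. Since $z_j\in[\alpha_j,\beta_j]$, $z_j\to t_0$, and since $A'(t)=\langle\nabla f(x+td),d\rangle-c_1\bigl(\langle\nabla f(x),d\rangle+\frac{1}{2\lambda}\langle\nabla^2\phi(x)d,d\rangle\bigr)$ is continuous in $t$ (the whole ray stays in $C=\RR^n$, so $A$ is $\mathcal{C}^1$), this gives $A'(t_0)\geq 0$. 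Finally, from the identity
\begin{align}
    W(t_0)=A'(t_0)+(c_1-c_2)\langle\nabla f(x),d\rangle+\frac{c_1}{2\lambda}\langle\nabla^2\phi(x)d,d\rangle,
\end{align}
together with $A'(t_0)\geq 0$, with $(c_1-c_2)\langle\nabla f(x),d\rangle>0$ (since $c_1<c_2$ and $\langle\nabla f(x),d\rangle<0$ by \cref{Search_direction_property}), and with $\langle\nabla^2\phi(x)d,d\rangle\geq\sigma\|d\|^2>0$ by the $\sigma$-strong convexity of $\phi$ in \cref{asmp:strongly-convex}, I conclude $W(t_0)>0$, which completes \eqref{limit-of-lineS}.

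The termination of the bracketing phase, the loop invariant, and the continuity arguments are routine. The main obstacle is the non-terminating case: one cannot pass $W\leq 0$ to the limit along the updated left endpoints $\alpha_j$ (that would only yield $W(t_0)\leq 0$), so the strict curvature inequality must instead be read off from the \emph{slope} of $A$ at $t_0$. The key device is to locate, inside every bracket, a zero $z_j$ of $A$ approached from the left by points where $A<0$ --- which is precisely what \cref{Armijo-Wolfe-step-exist2} provides by forcing a sign change of $A$ in each $[\alpha_j,\beta_j]$ --- and then to convert the one-sided bound $A'(t_0)\geq 0$ into $W(t_0)>0$ via the search direction property and the strong convexity of $\phi$. (Incidentally, comparing this with the limit $W(t_0)\leq 0$ along the $\alpha_j$ shows that the two phases together force the loop to terminate for every $\lambda>0$.)
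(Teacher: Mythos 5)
Your proposal is correct, and it is essentially the argument the paper intends: the paper itself gives no proof of \cref{Well-definedness}, deferring to \cite[Theorem 4.7]{Lewis2013}, and your write-up is exactly the natural adaptation — bracketing phase terminates by \cref{Armijo-Wolfe-step-exist1}, the bisection invariant $A(\alpha)<0\leq A(\beta)$ plus \cref{Armijo-Wolfe-step-exist2} gives the positive-measure Armijo--Wolfe set in every bracket, and in the non-terminating case $A(t_0)=0$ follows by continuity while $W(t_0)>0$ is extracted from the one-sided slope bound $A'(z_j)\geq 0$ at a zero of $A$ approached from the left, combined with \cref{Search_direction_property} and the $\sigma$-strong convexity of \cref{asmp:strongly-convex}; your closing observation that this forces termination for every $\lambda>0$ is a nice bonus consistent with the smooth case of \cite{Lewis2013}. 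Two minor remarks: the argument implicitly assumes $d\neq 0$ (otherwise $x$ is already stationary and the line search is not invoked), and your gloss on the role of ``$\lambda$ small enough'' in the terminating case is inessential and arguably reversed — decreasing $\lambda$ makes the margin term $\frac{c_1 t}{2\lambda}\langle\nabla^2\phi(x)d,d\rangle$ more negative on the right-hand side of \eqref{condition1-g0}, i.e.\ the Armijo test harder, but none of this is needed since the return branch of \cref{alg:vmw} already enforces $A(t)<0$ and $W(t)>0$ for the returned step.
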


\begin{remark}
    When $g \not\equiv 0$, Algorithm~\ref{alg:vmw} would not terminate in finite steps to obtain $t_k$ such that~\eqref{condition1} and~\eqref{condition2} hold.
    For example, the influence of $g$ plays a principal role in determining the overall behavior of the objective function. However, this case is rare in practice because $g$ is often a regularizer. In fact, Algorithm~\ref{alg:vmw} succeeds in obtaining $t_k$ satisfying~\eqref{condition1} and~\eqref{condition2} in Section~\ref{subsec:nonnegative-linear-inverse-problem}.
\end{remark}

\begin{remark}
    Let $x \in \interior\dom\phi$, $d \in \RR^n$, and $t \in \RR_+$.
    Suppose $d\neq0$ because $\langle\nabla f(x+td) + \xi, d\rangle \geq c_2\langle\nabla f(x) + \xi, d\rangle$ always holds when $d = 0$.
    We assume that $\phi$ is $\sigma$-strongly convex (see \cref{asmp:strongly-convex}) and $g$ is $\kappa$-Lipschitz continuous with $\kappa \leq \frac{c_1\sigma}{4\lambda}\|d\|$, \ie, $\max_{\xi\in\partial g(x)}\|\xi\| \leq \kappa$ for any $x \in \dom g$.
    When $g$ is strictly continuous on $\dom g$ (\eg, locally Lipschitz continuous functions are strictly continuous), its subdifferential at any point of $\dom g$ is bounded~\cite[Theorem 9.13]{rockafellar2009variational}.
    Using this and letting $\xi \in \partial g(x)$ and $\xi^+ \in \partial g(x + td)$, we have 
    \begin{align}
        0 \leq \langle\xi^+ - \xi, td\rangle \leq (\|\xi^+\|+\|\xi\|)\|td\| \leq 2\kappa t\|d\|,
    \end{align}
    which implies
    \begin{align}
        \langle\xi^+ - \xi, d\rangle \leq 2\kappa \|d\| \leq \frac{c_1\sigma}{2\lambda}\|d\|^2 \leq \frac{c_1}{2\lambda}\langle\nabla^2\phi(x)d,d\rangle, \label{ineq:subgradient-bounded-phi}
    \end{align}
    where the second inequality holds from $\kappa \leq \frac{c_1\sigma}{4\lambda}\|d\|$, and the last inequality holds because of the strong convexity of $\phi$.

    Suppose that $t$ satisfies the following Armijo condition:
    \begin{align}
        \langle\nabla f(x + td) + \xi^+, d\rangle &\geq c_1\left(\langle\nabla f(x), d\rangle + g(x+d) - g(x) + \frac{1}{2\lambda}\langle\nabla^2\phi(x)d,d\rangle\right).
    \end{align}
    The existence of such $t$ is assured by a well-known discussion, \eg,~\cite{Nocedal2006-sb}, and the mean-value theorem.
    We obtain
    \begin{align}
        \langle\nabla f(x + td) + \xi^+, d\rangle &\geq c_1\left(\langle\nabla f(x), d\rangle + g(x+d) - g(x) + \frac{1}{2\lambda}\langle\nabla^2\phi(x)d,d\rangle\right)\\
        &\geq c_1\langle\nabla f(x) + \xi, d\rangle + \frac{c_1}{2\lambda}\langle\nabla^2\phi(x)d,d\rangle,
    \end{align}
    where the last inequality holds from the convexity of $g$.
    On the other hand, using~\eqref{ineq:subgradient-bounded-phi}, we also have
    \begin{align}
        \langle\nabla f(x + td) + \xi^+, d\rangle &= \langle\nabla f(x + td) + \xi, d\rangle + \langle\xi^+ - \xi, d\rangle\\
        &\leq \langle\nabla f(x + td) + \xi, d\rangle + \frac{c_1}{2\lambda}\langle\nabla^2\phi(x)d,d\rangle.
    \end{align}
    Combining these inequalities and using $\langle\nabla f(x) + \xi, d\rangle < 0$ from~\eqref{ineq:search-direction} and $c_1 < c_2$ implies
    \begin{align}
        \langle\nabla f(x + td) + \xi, d\rangle \geq c_1\langle\nabla f(x) + \xi, d\rangle \geq c_2\langle\nabla f(x) + \xi, d\rangle.
    \end{align}
    Therefore, our line search procedure is well-defined.
    However, $\kappa \leq \frac{c_1\sigma}{4\lambda}\|d\|$ does not holds when $g$ is a dominance term, \ie, $\kappa > \frac{c_1\sigma}{4\lambda}\|d\|$ holds.
    In this case, our line search procedure might not be well-defined.
\end{remark}

\end{document}